\documentclass[10pt]{amsart}
\usepackage[utf8x]{inputenc}

\usepackage{amsmath}
\usepackage{amssymb}
\usepackage{graphicx}
\usepackage{amsfonts}
\usepackage{mathpazo}
\usepackage{color}
\usepackage{paralist}
\usepackage{enumitem}
\usepackage{amsxtra}
\usepackage{latexsym,mathrsfs}
\usepackage[hidelinks]{hyperref}
\usepackage[normalem]{ulem}
\newtheorem{theorem}{Theorem}[section]

\newtheorem{lemma}{Lemma}[section]
\newtheorem{proposition}{Proposition}[section]
\newtheorem{corollary}{Corollary}[section]
\newcommand{\Nr}{N^{r}}
\newcommand{\Nb}{N^{b}}

\newcommand{\db}{d^{b}}
\newcommand{\ceil}[1]{\left\lceil #1\right\rceil}
\newcommand{\floor}[1]{\left\lfloor #1\right\rfloor}

\title{Sharp Same-Color Cycle Covers in Two-Colored Complete Graphs}
\author{Xiao-Chuan Liu}
\address[Liu]{Departamento de Matemática,
 Universidade Federal de Pernambuco,
	Avenida Jornalista Aníbal Fernandes, Cidade Universitária, Recife, Brazil}
\email{xiaochuan.liu@ufpe.br}

\author{Jonatas Teodomiro}
\address[Teodomiro]{Departamento de Matemática,
 Universidade Federal de Pernambuco,
	Avenida Jornalista Aníbal Fernandes, Cidade Universitária, Recife, Brazil}
\email{jonatas.teodomiro@ufpe.br}

\author{Xu Yang}
\address[Yang]{Instituto de Computação,  Universidade Federal de Alagoas,
	Av. Lourival Melo Mota, S/N, Maceió, Brazil}
\email{yang@ic.ufal.br}

\begin{document}
\maketitle

\begin{abstract}
We extend the conjecture of Erd\H{o}s and Gy\'arf\'as on monochromatic path covers to the setting of monochromatic cycle covers. We prove that, for all $n$, every 2-edge-coloring of the complete graph on $n$ vertices contains a collection of at most $\lceil\sqrt{n}\rceil$ monochromatic cycles, all of the same color, that together cover all vertices.  The order of
the bound is best possible, and the ceiling is necessary for infinitely
many $n$.   
\end{abstract}

\section{Introduction}

In 1967, Gerencs\'er and Gy\'arf\'as~\cite{gerencser1967ramsey} proved
that the vertex set of every 2-edge-colored complete graph can be
partitioned into a red path and a blue path.  In 1979, Lehel conjectured
that the same statement remains true when paths are replaced by cycles.
Although replacing paths by cycles appears to be a natural extension,
the resulting problem is substantially more difficult.  Indeed, before
the conjecture was completely resolved, it was proved only for
sufficiently large complete graphs by \L uczak, R\"odl, and
Szemer\'edi~\cite{luczak1998partitioning} in 1998 using Szemer\'edi's
Regularity Lemma, and later Allen~\cite{allen2008covering} provided a
simpler proof in 2008.  Finally, Bessy and
Thomass\'e~\cite{bessy2010partitioning} settled Lehel's conjecture in
full in 2010 by giving a complete proof that does not rely on the
Regularity Lemma.

\begin{theorem}[Bessy and Thomass\'e~\cite{bessy2010partitioning}]
\label{thm:cycle-partition}
The vertex set of any 2-edge-colored complete graph \(K_n\) can be
partitioned into a red cycle and a blue cycle.
\end{theorem}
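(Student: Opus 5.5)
We would follow the strategy of Bessy and Thomassé, with the conventions that the empty set, a single vertex and a single edge count as (degenerate) cycles of either color. The argument is by contradiction on a minimal setup. The key structural object is a cycle whose edges are colored \emph{red-then-blue}: a Hamiltonian cycle $C=v_1v_2\dots v_n$ of $K_n$ whose edges form a red path followed by a blue path, with the two color changes at the vertices $v_1$ and $v_k$. Such a cycle exists because, by Gerencsér--Gyárfás, $V(K_n)$ splits into a red path $P$ and a blue path $Q$. Joining their endpoints appropriately gives a Hamiltonian cycle whose edges are all red on $P$ and all blue on $Q$, except for the two connecting edges. A short case analysis on the colors of those two edges lets us reroute so that each connecting edge extends one of the two monochromatic blocks.

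The main step is to turn such a two-block Hamiltonian cycle into a partition into a red cycle and a blue cycle. Call the two junction vertices $x$ and $y$. If $x=y$, or $x$ and $y$ are adjacent on $C$, we are done directly. Otherwise we try to shorten: we look for chords $v_iv_{j}$ whose color lets us split the red arc and the blue arc into a red cycle and a blue cycle. The standard tool is a \emph{crossing-chord} argument. Suppose the red arc is $R=v_1\dots v_k$ and the blue arc is $B=v_k\dots v_n v_1$. Closing $R$ into a cycle needs the chord $v_1v_k$ to be red, or more generally needs a red chord that, after discarding an end segment of $R$, lets us absorb that segment into the blue side. Bessy--Thomassé make this systematic by proving a stronger statement: every 2-colored $K_n$ has a \emph{red-blue splittable} cyclic ordering, where vertices are ordered cyclically and consecutive colors behave well enough that one cut yields a red cycle and a blue cycle. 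This statement is proved by induction on $n$, inserting a new vertex $v$. We would adopt this stronger inductive hypothesis. A natural candidate is: there exist a red cycle $C_R$ and a blue cycle $C_B$ partitioning $V$, together with designated vertices on each cycle that are adjacent in the appropriate colors. This extra data is what makes insertion possible.

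The inductive step removes a vertex $v$, applies the strengthened hypothesis to $K_n-v$, and then inserts $v$. If $v$ has two consecutive neighbours on $C_R$ joined to it in red, or on $C_B$ in blue, we splice $v$ in. Otherwise the colors of edges from $v$ alternate along each cycle in a constrained way. We would then use the designated vertices to reroute: $v$ is taken into one cycle through the distinguished vertex, and the displaced vertex is pushed to the other cycle. The main obstacle is exactly this step. We must choose the strengthened invariant so that it is preserved after every insertion. The weak statement of the theorem alone does not suffice, which is why Lehel's conjecture resisted simpler proofs. We would first try the Bessy--Thomassé invariant (a red cycle and a blue cycle meeting the ``almost'' condition at one vertex), and verify preservation by exhausting the color patterns of $v$ on the two vertices adjacent to each distinguished point.
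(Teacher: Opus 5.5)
The paper gives no proof of this statement. It quotes Bessy and Thomass\'e as background and never uses the result later; the small cases of Theorem~\ref{thm:main} rely on the long-cycle theorem of Faudree, Lesniak and Schiermeyer instead. So your proposal has to stand on its own as a proof of Lehel's conjecture, and it does not.

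Your first step is fine. Closing a red path and a blue path that partition $V$ always gives a Hamiltonian cycle with at most two colour changes, so no case analysis is even needed. After that, the proposal is a plan whose decisive ingredient is missing. You never state the strengthened inductive hypothesis: ``designated vertices on each cycle that are adjacent in the appropriate colours'' and a ``red--blue splittable cyclic ordering'' are not checkable statements. You also never show that any such invariant holds initially and survives the insertion of a vertex, which is where all the difficulty lies.

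When $v$ cannot be spliced into either cycle, you only know two things. No two consecutive vertices of $C_R$ are red neighbours of $v$, and no two consecutive vertices of $C_B$ are blue neighbours of $v$. This allows, for instance, $v$ blue to all of $C_R$ and red to all of $C_B$. Then any new cycle through $v$ with more than one vertex must use neighbours of $v$ from the \emph{opposite} old cycle, so the partition has to be rebuilt rather than patched at one distinguished vertex. Moving $v$ in and ``pushing the displaced vertex to the other cycle'' recreates the same insertion problem for the displaced vertex. Nothing in your sketch bounds or closes this cascade, or shows that the designated vertices can be re-chosen afterwards so that the invariant persists.

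There is also a concrete false step: the case where the junction vertices $x,y$ are adjacent on $C$ is not finished ``directly''. In that case one colour class of $C$ is the single edge $xy$. The other is a Hamiltonian path from $x$ to $y$ whose ends are joined in the opposite colour, and this by itself does not give a red cycle and a blue cycle partitioning $V$. For example, if the red graph is exactly that Hamiltonian path, the red cycle can be at most a single edge of the path, and you still need a blue Hamiltonian cycle on the remaining vertices. Similarly, the ``crossing-chord'' paragraph names a tool without saying which chords are sought or why they must exist.

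As written, the proposal locates where a proof would have to happen but does not supply it. Naming a Bessy--Thomass\'e-type invariant without stating it does not substitute for it. Completing the argument would require an explicit invariant together with the exhaustive colour-pattern verification you defer to your final sentence.
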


The problem of covering vertices of edge-colored complete graphs by
monochromatic paths and cycles has also been studied more generally.
Gyárfás~\cite{gyarfas1983vertex} investigated vertex coverings by
monochromatic paths and cycles, while Erdős, Gyárfás, and
Pyber~\cite{erdosgyarfaspyber1991vertex} initiated a systematic study
of coverings and partitions of multicolored complete graphs by
monochromatic cycles and trees.  In these results the covering pieces
are allowed to have different colors.  The problem considered here is
more restrictive: all cycles in the cover are required to have the
same color.

Motivated by these partition problems, one may more generally consider
covering problems in edge-colored complete graphs.  In this paper, we
study coverings of edge-colored complete graphs by monochromatic
structures.  A well-known conjecture of Erd\H{o}s and Gy\'arf\'as
concerns covering the vertex set of a 2-edge-colored complete graph by
a small number of monochromatic paths.  As noted in Gy\'arf\'as'
survey~\cite{gyarfas2016vertex}, when the problem was first discussed
with Erd\H{o}s, a misunderstanding led them to consider coverings by
monochromatic paths all in the same color.  This perspective initiated
the study of minimizing the number of same-color monochromatic paths
covering all vertices of a 2-edge-colored complete graph.  Erd\H{o}s
and Gy\'arf\'as~\cite{erdos1995vertex,gyarfas2016vertex} conjectured
that the vertex set of every 2-edge-colored complete graph can be
covered by at most \(\sqrt n\) monochromatic paths all in the same
color.  Pokrovskiy, Versteegen, and
Williams~\cite{pokrovskiy2026proof} proved the conjecture for all
sufficiently large \(n\).  Very recently, Chen and
Chen~\cite{chen2026allorders} completed the problem by proving it for
every positive integer \(n\).

Eugster and Mousset~\cite{eugster2018vertex} studied a more general
version of this covering problem in which the edges of $K_n$ are
colored with \(r\) colors and the covering pieces are allowed to use
at most \(s\) colors in total. Their results apply both to paths and
to cycles. In particular, in the two-color setting with only one
color allowed in the cover, their Corollary~3 implies that the minimum
number of same-color monochromatic cycles needed in the worst case is
of order $\Theta(\sqrt n).$
Thus the correct order of magnitude for the cycle problem was already
known, but the multiplicative constant, and hence the sharp bound,
remained undetermined.

The present paper determines this bound sharply. We prove that for
every positive integer \(n\), every red--blue edge-coloring of \(K_n\)
admits a cover by at most \(\lceil\sqrt n\rceil\) monochromatic cycles,
all of the same color. Moreover, our construction below shows that
this bound is attained for infinitely many \(n\). In particular, our
result strengthens the asymptotic \(\Theta(\sqrt n)\) estimate of
Eugster and Mousset to the sharp universal bound $\lceil \sqrt{n}\rceil$, attained for infinitely many $n$. As throughout the paper, a single vertex and a
single edge are regarded as degenerate cycles, and the cycles in a
cover are allowed to intersect.
  Our main result is stated in the following theorem.

\begin{theorem}\label{thm:main}
For every positive integer \(n\), every red--blue edge-coloring of
\(K_n\) has a cover by at most \(\ceil{\sqrt n}\) monochromatic cycles,
all of one color.
\end{theorem}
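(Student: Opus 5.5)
We plan to derive Theorem~\ref{thm:main} from the sharp path-cover theorem of Chen and Chen~\cite{chen2026allorders} together with Theorem~\ref{thm:cycle-partition}, by a direct argument on the structure of the two color classes. Set $k=\ceil{\sqrt n}$. For a vertex $v$ write $\Nr(v)$ and $\Nb(v)$ for its red and blue neighbourhoods, and $\db(v)=|\Nb(v)|$. The key observation is that a red path whose endpoints are joined by a red edge, or a red path of length at most one, is already a (possibly degenerate) red cycle. So a red path can fail to be closable only when its endpoints span a blue edge. We therefore aim to convert a same-color path cover into a same-color cycle cover by ``closing'' paths, accepting an occasional failure that we must then absorb.

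The first step is a dichotomy on the blue graph. Suppose the blue graph has a vertex $v$ with $\db(v)\ge n-k$ (or the red graph symmetrically). Then the red graph contains at most $k-1$ vertices outside $\Nb(v)\cup\{v\}$. In this case we plan to cover everything in blue, roughly as follows. Apply Theorem~\ref{thm:cycle-partition} to $K_n[\Nb(v)]$ to get a red cycle $C_r$ and a blue cycle $C_b$. Extend $C_b$ through $v$ to a blue cycle: replace an edge $xy$ of $C_b$ by $xv,vy$, or use the degenerate case when $|C_b|\le 1$. Then cover $C_r$ and the remaining at most $k-1$ vertices by degenerate blue cycles (single blue edges and vertices). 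The count requires care, so in this case we will instead use the sharper version: choose $v$ of maximum blue degree and partition the neighbourhood recursively.

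The main line is the case where every vertex has both red and blue degree at least $k$. Take a cover of the vertex set by at most $\sqrt n\le k$ monochromatic paths of one color, say red, given by~\cite{chen2026allorders}. For each path $P=u\dots w$ whose endpoints span a blue edge, we try to rotate: a Pósa-type rotation replaces an endpoint $w$ by a vertex $w'$ on $P$ adjacent in red to $w$. This keeps the vertex set of $P$ and creates new endpoint pairs. Because $w$ has many red neighbours, the endpoint set obtained this way is large. If no pair of endpoints is red-adjacent, we obtain a large set that is blue-complete to a large set. That blue complete bipartite structure then lets us merge the problematic paths, or switch the whole cover to blue with at most $k$ blue cycles using Theorem~\ref{thm:cycle-partition} inside the two parts.

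The main obstacle is the exact count: the rotation/switching argument must never increase the number of cycles beyond $\ceil{\sqrt n}$. Paths whose red neighbours all lie off the path cannot be rotated at all. Here we would first try to reroute such a path through another path of the cover, using the red edge to an outside vertex. The goal is to concatenate two paths into one path that is closable, which saves a cycle and compensates for failures elsewhere. We expect to need a small case analysis for $n\le 9$ or so, checked by hand.
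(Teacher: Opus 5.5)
Your proposal has a genuine gap. The step that keeps the count at $\ceil{\sqrt n}$ is never supplied, and the one case you make concrete fails as planned.

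\textbf{The first case.} A large blue degree does not let you commit to blue. Let $v$ be blue to a set $A$ with $|A|=n-k$, let $B$ be the remaining $k-1\ge 2$ vertices, and color every other edge red. Then $\db(v)=n-k$, so you are in your first case. But the blue graph is a star, so every blue cover needs $n-1$ cycles. Meanwhile $A\cup B$ is a red clique and $v$ is red to $B$, so one red Hamilton cycle suffices. Applying Theorem~\ref{thm:cycle-partition} inside $\Nb(v)$ returns a red cycle through all but at most one vertex of $A$, which cannot be re-covered by few degenerate blue cycles. Since $v$ already has maximum blue degree, ``partition the neighbourhood recursively'' does not help either. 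You would need a criterion for choosing the color, and that is the whole difficulty.

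\textbf{The main case.} Closing the paths is asserted rather than proved. Chen and Chen give at most $\floor{\sqrt n}$ paths, so for $n=k^2$ you have no spare cycle, and otherwise exactly one. A path whose endpoints span a blue edge need not be repairable at the cost of a single extra cycle. Rotations only use red neighbours of an endpoint that lie on the same path, and your only information is red degree at least $k$. Nothing forces the endpoint sets to be larger than order $k$, so the blue complete bipartite graph you extract may involve only $O(\sqrt n)$ vertices. It says nothing about the remaining vertices and cannot produce a blue cover of all of $K_n$. Applying Theorem~\ref{thm:cycle-partition} ``inside the two parts'' also yields red cycles, which a blue cover cannot use. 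Rerouting an endpoint into an interior vertex of another path splits that path, so no cycle is saved.

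What is missing is a global counting device. The paper gets one without using the path theorem at all. In a minimum counterexample with $n=(k-1)^2+r$, any red cycle and any blue cycle share at most $r-1$ vertices (Lemma~\ref{lem:intersection}). The reason is that deleting $r$ common vertices leaves $K_{(k-1)^2}$, whose same-color cover by $k-1$ cycles extends by one of the two cycles. Combined with the red successor cliques of a longest blue cycle (Lemma~\ref{lem:successor}), this reduces the problem to one of two tasks. Either build a single red cycle through $r$ vertices of that blue cycle, or build a $Y$-spanning red cycle and finish with an exact bipartite packing. The small values of $k$ are handled in the appendices.
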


Note that when \(\sqrt n\) is not an integer, Pokrovskiy, Versteegen, and
Williams~\cite{pokrovskiy2026proof} and the all-order path theorem
of Chen and Chen~\cite{chen2026allorders} give a cover by
\(\lfloor\sqrt n\rfloor\) monochromatic paths of the same color.  In
contrast, Theorem~\ref{thm:main} gives
\(\lceil\sqrt n\rceil\) monochromatic cycles of the same color.  Thus,
when \(\sqrt n\) is not an integer, the cycle version may require one
more monochromatic component than the path version.  The following
example shows that this gap of one is unavoidable.

Let \(n\) be such that \(\sqrt{n+1}\) is an integer.  Partition
\(V(K_n)=A\cup B\), where \(|A|=n-\floor{\sqrt n}\) and
\(|B|=\floor{\sqrt n}\), and color all edges inside \(A\) blue and
all remaining edges red.
Every blue cycle is contained in \(A\), except for degenerate
single-vertex cycles in \(B\), so covering all vertices in blue
requires \(\floor{\sqrt n}+1=\ceil{\sqrt n}\) cycles.  On the other
hand, every red cycle contains at most \(\floor{\sqrt n}\) vertices of
\(A\).  Therefore, covering \(A\) with red cycles requires at least
\[
 \left\lceil\frac{n-\floor{\sqrt n}}{\floor{\sqrt n}}\right\rceil
 =\left\lceil\frac{n}{\floor{\sqrt n}}-1\right\rceil
 =\sqrt{n+1}=\ceil{\sqrt n}
\]
red cycles.  This shows that the bound \(\ceil{\sqrt n}\) is best
possible.  In particular, when \(\sqrt n\) is not an integer, the
difference of one between the path and cycle bounds is necessary for infinitely many $n$. 

We conclude with a brief overview of the proof.  We argue by means of a
minimum counterexample and choose a longest monochromatic cycle \(C\),
say blue.  Writing \(X=V(C)\) and \(Y=V(K_n)\setminus X\), minimality
gives a sharp restriction on the intersection of red and blue cycles.
The successors on \(C\) of the blue neighbors of a vertex in \(Y\)
form a red clique; this successor structure is the main ingredient in
our extension arguments.  Bipartite cycle lemmas are then used either
to construct a red cycle spanning \(Y\), after which an exact packing
argument completes the cover, or to enlarge the successor structure
until the intersection restriction is contradicted.  This gives a
uniform proof when \(\ceil{\sqrt n}\ge5\).  The remaining cases require
additional small structural configurations, which are treated in the
appendices.

\section{Bipartite tools}

For a bipartite graph \(H[A,B]\) and \(T\subseteq V(H)\), define
\(\sigma(T)\) to be the minimum of \(d_H(a)+d_H(b)\) over all
nonedges \(ab\), where \(a\in A\), \(b\in B\), and
\(\{a,b\}\cap T\ne\varnothing\), with the minimum understood as
infinity when there is no such nonedge.  We shall repeatedly use the
following cyclability theorem.

\begin{theorem}[Okamura and Yamashita~\cite{okamura2013degree}, Theorem~5]
\label{thm:OY}
Let \(H[A,B]\) be a \(2\)-connected bipartite graph with
\(|A|\ge |B|\), and let \(T\subseteq V(H)\).  If
\(\sigma(T)\ge |A|+1\), then either \(H\) has a cycle containing
every vertex of \(T\), or \(|T\cap A|>|B|\) and \(H\) has a cycle
containing every vertex of \(B\).
\end{theorem}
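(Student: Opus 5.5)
Since Theorem~\ref{thm:OY} is quoted from Okamura and Yamashita, the paper will simply invoke it. If I had to prove it, I would use a Bondy-type longest-cycle argument adapted to bipartite graphs and to the restricted degree-sum condition \(\sigma(T)\ge |A|+1\).

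\textbf{Setup.} Because \(H\) is \(2\)-connected, it has cycles, and any two vertices lie on a common cycle. Among all cycles I choose \(C\) to maximize \(|V(C)\cap T|\). Subject to that, I maximize \(|V(C)\cap B|\), and subject to both, I maximize \(|V(C)|\). I fix an orientation of \(C\) and write \(x^+\) and \(x^-\) for the successor and predecessor of \(x\) on \(C\). Suppose some \(t\in T\) is not on \(C\). By \(2\)-connectivity (Menger, fan lemma) there are two internally disjoint paths from \(t\) to distinct vertices of \(C\). Let \(R\) be the component of \(H-V(C)\) containing \(t\), and let \(u_1,\dots,u_k\) (with \(k\ge 2\)) be its attachment vertices on \(C\), in cyclic order.

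\textbf{Nonedges and the degree count.} Maximality of \(C\) forbids short rerouting. I would then show two facts.
\begin{enumerate}
\item No vertex of \(R\) reachable from \(t\) by a suitable path is adjacent to \(u_i^{+}\). Otherwise a cycle through \(t\) and all previous \(T\)-vertices arises.
\item The successors \(u_i^+\) and \(u_j^+\) are "independent" in the crossing sense. There are no edges \(u_i^+ w\) and \(u_j^+ w^+\) with \(w\) in the appropriate arcs, since such edges would allow rerouting through \(R\).
\end{enumerate}
Parity matters here: \(u_i^+\) lies in the side opposite to \(u_i\). So I pair \(t\), or a neighbor of \(t\) in \(R\) in the correct class, with some \(u_i^+\) to obtain a nonedge \(ab\) with \(a\in A\), \(b\in B\) and an endpoint in \(T\). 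Then \(\sigma(T)\le d(a)+d(b)\).

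Next I bound \(d(a)+d(b)\le |A|\). The map \(x\mapsto x^+\) (or \(x^-\)) sends the neighbors of \(b\) on \(C\) injectively into \(A\)-vertices on \(C\) that are non-neighbors of \(a\). Neighbors of \(a\) and \(b\) off \(C\) are handled by the \(R\)-structure and by the independence of the different components of \(H-V(C)\). This gives \(d(a)+d(b)\le |A|\), contradicting \(\sigma(T)\ge |A|+1\). Hence \(C\) contains every vertex of \(T\), unless the counting fails.

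\textbf{The exceptional case, which I expect to be the main obstacle.} The counting breaks exactly when the cycle is forced to skip \(A\)-vertices. A cycle contains equally many vertices of \(A\) and \(B\), so if \(|T\cap A|>|B|\), no cycle can contain all of \(T\). In that case I would use the secondary maximization of \(|V(C)\cap B|\). Suppose some \(b\in B\) lies off \(C\). I would run the same fan and successor argument with \(b\) in place of \(t\). The needed nonedges have an endpoint in \(T\cap A\); there are enough such vertices, since \(|T\cap A|>|B|\ge |V(C)\cap A|\) implies that \(T\cap A\) has a vertex off \(C\). This should yield the same degree contradiction and hence a cycle through all of \(B\).

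The delicate part is the bookkeeping: making sure that each nonedge used really has an endpoint in \(T\), and that the injection really lands in \(A\) with the right parity. I would first try choosing the off-cycle vertex in \(R\) adjacent to the attachment vertex with the longest arc. If that fails, I would fall back on a hopping (Woodall-type) extension of the successor set.
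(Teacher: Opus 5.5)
The paper does not prove this statement. It quotes it from Okamura and Yamashita and uses it as a black box, so your opening remark is the right response. The Bondy-type sketch you then give is not a proof, though, and the gap sits exactly in what you defer as ``bookkeeping''.

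The hypothesis $\sigma(T)\ge|A|+1$ controls only nonedges $ab$ with $a\in A$, $b\in B$ \emph{and an endpoint in $T$}. Because $C$ maximizes $|V(C)\cap T|$, the successors $u_i^+$ need not lie in $T$, so near $R$ the only admissible endpoint is $t$ itself. Pairing ``a neighbor of $t$ in $R$ in the correct class'' with $u_i^+$ gives a nonedge about which $\sigma(T)$ says nothing. And $t$ itself can fail. If $R=\{t\}$ with $t\in A$, every attachment $u_i$ lies in $B$, so every $u_i^+$ lies in $A$ and $tu_i^+$ is not an $A$--$B$ pair. You cannot even guarantee that the natural substitute $tu_i^{++}$ is a nonedge. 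The rerouting $u_i\,t\,u_i^{++}$ drops $u_i^+$, which may be in $T$, so no maximality is violated. Handling $T$-vertices of uncontrolled parity on $C$ is where a bipartite cyclability proof must do real work, and your sketch never does it.

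The degree count is also unproved. Take $a\in R\cap A$ and $b=u_i^+\in B$. The shifted set $N_C(b)^{\pm}$ lies in $B$, not in $A$ as you wrote, and this gives $d_C(a)+d_C(b)\le|V(C)|/2=|A\cap V(C)|$. Off $C$ you only have $d(a)-d_C(a)=d_R(a)$ and, by maximality, $d(b)-d_C(b)\le|A\setminus(V(C)\cup R)|$. Together these give $d(a)+d(b)\le|A|-|R\cap A|+d_R(a)$, which is at most $|A|$ only when $d_R(a)\le|R\cap A|$. This fails when $R$ is a star centred at $t\in A$ with several leaves in $B$. You cannot move to a leaf instead, since the leaves need not be in $T$. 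The hypothesis $|A|\ge|B|$ is never used to absorb this imbalance.

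In the exceptional branch, the vertex $b\in B$ you want to absorb need not be in $T$, and neither need the successors of its attachment vertices. So the nonedges your fan-and-successor argument produces around $b$ need not have a $T$-endpoint. The existence of some vertex of $T\cap A$ off $C$ elsewhere does not help. As written, the sketch establishes neither alternative of the conclusion.
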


The next lemma is the cycle counterpart of the bipartite path lemma of
Pokrovskiy, Versteegen, and Williams~\cite[Lemma~2.2]{pokrovskiy2026proof}.
Their common-neighbor argument closes cyclically under the present
degree condition, so we include the short proof.

\begin{lemma}\label{lem:smallclass}
Let \(H[X,Y]\) be bipartite, where \(|X|\ge |Y|=m\ge2\).  If
\(d_H(y)\ge (|X|+m)/2\) for every \(y\in Y\), then \(H\) has a cycle
containing every vertex of \(Y\) and exactly
\(m\) vertices of \(X\).
\end{lemma}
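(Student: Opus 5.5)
The plan is a common-neighbor argument. The key observation is that for any two distinct vertices \(y,y'\in Y\) the degree condition gives
\[
|N_H(y)\cap N_H(y')|\ \ge\ d_H(y)+d_H(y')-|X|\ \ge\ m .
\]
So every pair of vertices of \(Y\) has at least \(m\) common neighbors in \(X\). Order \(Y\) cyclically as \(y_1,\dots,y_m\), indices taken modulo \(m\). The goal is to choose distinct vertices \(x_1,\dots,x_m\in X\) with \(x_i\in N_H(y_i)\cap N_H(y_{i+1})\) for every \(i\). Then
\[
y_1x_1y_2x_2\cdots y_mx_my_1
\]
is a cycle containing all of \(Y\) and exactly \(m\) vertices of \(X\).

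The main step is making these choices distinct, and I would do it greedily. First choose \(x_1,\dots,x_{m-1}\) in order. When \(x_i\) is chosen, at most \(i-1\le m-2\) vertices of \(X\) are already used, while the pair \(y_i,y_{i+1}\) has at least \(m\) common neighbors. So a fresh choice always exists.

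The closing edge is the main obstacle, since it is where the path version would stop. When choosing \(x_m\), a common neighbor of \(y_m\) and \(y_1\), exactly \(m-1\) vertices are already used. Since this pair has at least \(m\) common neighbors, at least one remains available. This is precisely where the bound \(|N(y)\cap N(y')|\ge m\), rather than \(m-1\), is needed, and it is what makes the argument close cyclically.

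The case \(m=2\) needs a short check. Then \(Y=\{y_1,y_2\}\), and we need two distinct common neighbors \(x_1,x_2\). The bound gives at least \(2\) common neighbors, so the 4-cycle \(y_1x_1y_2x_2\) exists and contains exactly \(2\) vertices of \(X\). This agrees with the general argument, so no separate treatment is needed beyond noting that the cycle is genuine (length \(2m\ge4\)). Distinctness of the \(x_i\) together with \(H\) being bipartite guarantees that the closed walk is a cycle.
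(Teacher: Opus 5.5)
Your proposal is correct and matches the paper's proof. The degree condition gives $|N_H(y_i)\cap N_H(y_{i+1})|\ge m$ for each cyclically consecutive pair. Since there are only $m$ such sets, distinct representatives $x_1,\ldots,x_m$ can be chosen greedily, and these close the cycle $y_1x_1y_2x_2\cdots y_mx_my_1$.
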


\begin{proof}
Order \(Y=\{y_1,\ldots,y_m\}\) cyclically and put
\(A_i=N_H(y_i)\cap N_H(y_{i+1})\), where \(y_{m+1}=y_1\).
Then \(|A_i|\ge m\) for every \(i\).  Since there are \(m\) such sets,
we can successively choose distinct \(x_i\in A_i\).  The cycle
\(y_1x_1y_2x_2\cdots y_mx_my_1\) has the required form.
\end{proof}

The next two Hamilton-cycle lemmas are direct consequences of
Theorem~\ref{thm:OY}; their proofs verify its 2-connectivity hypothesis.

\begin{lemma}\label{lem:nearcomplete}
Let \(H[A,B]\) be bipartite with \(|A|=|B|=s\ge2\).  If
\(d_H(a)\ge2\) for \(a\in A\) and \(d_H(b)\ge s-1\) for \(b\in B\),
then \(H\) has a Hamilton cycle.
\end{lemma}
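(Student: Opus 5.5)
We will apply Theorem~\ref{thm:OY} with \(T=V(H)\). The degree condition is immediate, so the substance of the argument is verifying 2-connectivity and then excluding the second alternative of the theorem.

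\emph{Degree condition.} Take \(A\) as the larger class, with \(|A|=|B|=s\). Consider any nonedge \(ab\) with \(a\in A\), \(b\in B\). Then \(d(b)\ge s-1\), and \(b\) misses only \(a\), so \(d(b)=s-1\). Also \(d(a)\ge2\). Hence
\[
d(a)+d(b)\ge s+1=|A|+1 .
\]
Therefore \(\sigma(T)\ge |A|+1\). If there is no nonedge at all, then \(\sigma(T)=\infty\) and the bound holds trivially. Note also that every \(b\in B\) misses at most one vertex of \(A\).

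\emph{Excluding the second alternative.} The second alternative of Theorem~\ref{thm:OY} requires \(|T\cap A|=s>|B|=s\), which is impossible. So Theorem~\ref{thm:OY} will give a cycle through all of \(T=V(H)\), that is, a Hamilton cycle.

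\emph{Connectivity.} Every \(a\in A\) has at least \(2\ge1\) neighbors in \(B\). When \(s\ge3\), any two vertices \(b,b'\in B\) have at least \(2(s-1)-s=s-2\ge1\) common neighbors. So \(H\) is connected.

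\emph{2-connectivity.} This is the main step. Suppose a cut vertex \(v\) exists.

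\emph{Case \(v\in A\).} The graph \(H-v\) still has every \(a'\ne v\) adjacent to some vertex of \(B\), since \(d(a')\ge2\). For \(s\ge4\), two vertices \(b,b'\in B\) share at least \(s-2\ge2\) common neighbors, so at least one common neighbor other than \(v\) survives. Hence all of \(B\) lies in one component of \(H-v\), and each \(a'\) attaches to it. So \(v\) is not a cut vertex.

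\emph{Case \(v\in B\).} Each \(a\in A\) keeps at least \(d(a)-1\ge1\) neighbor in \(B\setminus\{v\}\). The vertices of \(B\setminus\{v\}\) are pairwise joined through common neighbors as above. Again \(H-v\) is connected.

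\emph{Small cases.} For \(s=3\), the common-neighborhood count gives only \(s-2=1\), so the argument above may fail and we check directly. Each \(b\in B\) misses at most one vertex of \(A\), and each \(a\in A\) has degree at least \(2\). So \(H\) is \(K_{3,3}\) minus a set of edges forming a partial matching with at most one missing edge per vertex, i.e.\ \(K_{3,3}\) minus a matching. Such a graph contains a 6-cycle: \(K_{3,3}\) minus a perfect matching is itself a 6-cycle, and deleting fewer edges only adds edges. For \(s=2\), each \(a\in A\) has degree \(2\), so \(H=K_{2,2}\), which is a 4-cycle.

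We expect the 2-connectivity verification, especially this boundary case \(s=3\), to be the only real obstacle.
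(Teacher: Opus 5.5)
Your proof is correct and follows essentially the same route as the paper: verify 2-connectivity and apply Theorem~\ref{thm:OY}, where your choice $T=V(H)$ in place of the paper's $T=A$ makes no difference, since $|T\cap A|=s=|B|$ rules out the exceptional alternative either way. The only real deviation is at $s=3$: the paper checks 2-connectivity with a disjoint-neighborhood argument, whereas you observe directly that the missing edges form a matching, so $H$ contains $K_{3,3}$ minus a perfect matching, which is already a $6$-cycle.
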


\begin{proof}
For \(s=2\), every vertex of \(A\) is complete to \(B\).
Let \(s\ge3\).  We first check that \(H\) is \(2\)-connected.
After deleting a vertex of \(A\), any two vertices of \(B\) have a
common neighbor if \(s\ge4\), since \(2(s-2)-(s-1)=s-3\ge1\).
When \(s=3\), if the two remaining \(A\)-vertices were in different
components, their two disjoint neighborhoods in the three-set \(B\)
would both have size at least two, which is impossible.  Every
remaining vertex of \(B\) still has a neighbor.  After deleting a
vertex of \(B\), every vertex of \(A\) retains a neighbor, and every
two remaining vertices of \(B\) have a common neighbor in \(A\).
Thus deletion of any vertex leaves a connected graph.

For every nonedge \(ab\), with \(a\in A\) and \(b\in B\), we have
\(d_H(a)+d_H(b)\ge2+(s-1)=s+1\).
Apply Theorem~\ref{thm:OY} with \(T=A\).  Since
\(|T\cap A|=|B|=s\), the exceptional alternative is impossible, so a
cycle contains \(A\).  The two classes are balanced, hence this cycle
is Hamiltonian.
\end{proof}

\begin{lemma}\label{lem:degreesum}
Let \(s\) and \(\lambda\) be nonnegative integers with
\(s\ge 2\lambda+2\), and let \(H[A,B]\) be bipartite with \(|A|=|B|=s\).  Suppose
\(d_H(a)\ge\lambda+1\) for \(a\in A\), and
\(d_H(b)\ge s-\lambda\) for \(b\in B\).  Then \(H\) has a Hamilton
cycle.
\end{lemma}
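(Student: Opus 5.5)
We apply Theorem~\ref{thm:OY} with \(T=A\), as in the proof of Lemma~\ref{lem:nearcomplete}. Once \(H\) is known to be \(2\)-connected, the degree condition is immediate: for every nonedge \(ab\) with \(a\in A\), \(b\in B\),
\[
d_H(a)+d_H(b)\ge(\lambda+1)+(s-\lambda)=s+1=|A|+1,
\]
so \(\sigma(A)\ge|A|+1\). Since \(|T\cap A|=s=|B|\), the exceptional alternative of Theorem~\ref{thm:OY} cannot occur. Hence some cycle contains all of \(A\). Because the classes are balanced, that cycle also contains all of \(B\) and is Hamiltonian. The whole proof therefore reduces to checking \(2\)-connectivity, and that is where the hypothesis \(s\ge2\lambda+2\) enters.

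The key observation is that any two vertices \(b,b'\in B\) have at least
\[
2(s-\lambda)-s=s-2\lambda\ge2
\]
common neighbours in \(A\). After deleting a single vertex, any two surviving \(B\)-vertices therefore still share a common neighbour, so all remaining \(B\)-vertices lie in one component. It remains to see that every surviving \(A\)-vertex still has a neighbour in the remaining part of \(B\).

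If the deleted vertex lies in \(A\), the \(B\)-side is untouched. Every \(a\in A\) has \(d_H(a)\ge\lambda+1\ge1\), so each surviving \(A\)-vertex attaches to the single \(B\)-component.

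If the deleted vertex lies in \(B\), each surviving \(a\) keeps at least \(d_H(a)-1\ge\lambda\) neighbours. This is positive when \(\lambda\ge1\), and the graph stays connected. The only delicate case is \(\lambda=0\), where \(d_H(a)\ge1\) alone would not survive the deletion. In that case, however, \(d_H(b)\ge s\) for every \(b\in B\), so \(H=K_{s,s}\) with \(s\ge2\), which is \(2\)-connected and Hamiltonian directly. Connectivity of \(H\) itself follows from the same common-neighbour argument.

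I expect the \(\lambda=0\) edge case in the \(2\)-connectivity check to be the only real obstacle; everything else is a routine degree count.
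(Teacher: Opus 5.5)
Your proof is correct and follows the paper's argument. You verify $2$-connectivity using the fact that any two $B$-vertices share at least $s-2\lambda\ge2$ common neighbours in $A$, then apply Theorem~\ref{thm:OY} with $T=A$ and use the balance of the two classes to conclude the cycle is Hamiltonian. Your explicit treatment of $\lambda=0$, where $H=K_{s,s}$, also justifies a step the paper states without comment: that every vertex of $A$ still has a neighbour after a vertex of $B$ is deleted.
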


\begin{proof}
After deleting a vertex of \(A\), the vertices of \(B\) retain at least
\(2(s-\lambda-1)-(s-1)=s-2\lambda-1\ge1\) common neighbors.  After
deleting a vertex of \(B\), every vertex of
\(A\) retains a neighbor and the remaining vertices of \(B\) have at
least \(s-2\lambda\ge2\) common neighbors.  Hence \(H\) is
\(2\)-connected.  Every nonedge \(ab\), \(a\in A,b\in B\), satisfies
\(d_H(a)+d_H(b)\ge(\lambda+1)+(s-\lambda)=s+1\).
Theorem~\ref{thm:OY}, again with \(T=A\), gives a cycle
containing \(A\), which is Hamiltonian.
\end{proof}

\begin{lemma}\label{lem:defectselection}
Let \(H[A,B]\) be bipartite with \(|B|=w\).  Suppose one vertex of
\(B\) is complete to \(A\), and every vertex of \(B\) has at most
\(\lambda\) non-neighbors in \(A\).  If
\[
 w\ge2\lambda+2\qquad and \qquad
 |A|-w\ge\floor{\frac{\lambda(w-1)}{w-\lambda}},
\]
then \(H\) has a cycle containing \(B\) and exactly \(w\) vertices of
\(A\).
\end{lemma}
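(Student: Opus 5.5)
The plan is to reduce the statement to Lemma~\ref{lem:degreesum} with \(s=w\). We will select a set \(A'\subseteq A\) of exactly \(w\) vertices such that the balanced bipartite graph \(H[A',B]\) satisfies that lemma's degree hypotheses. A Hamilton cycle of \(H[A',B]\) then contains all of \(B\) and exactly \(w\) vertices of \(A\), which is what we want.

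First, the side \(B\) needs no care. Every \(b\in B\) has at most \(\lambda\) non-neighbors in \(A\), hence at most \(\lambda\) non-neighbors in any \(A'\subseteq A\). So \(d_{H[A',B]}(b)\ge w-\lambda\) holds for every choice of \(A'\) with \(|A'|=w\). The hypothesis \(w\ge2\lambda+2\) is exactly the condition \(s\ge2\lambda+2\) of Lemma~\ref{lem:degreesum}.

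It remains to choose \(A'\) so that every \(a\in A'\) has at least \(\lambda+1\) neighbors in \(B\). Call \(a\in A\) \emph{bad} if \(d_H(a)\le\lambda\), that is, if \(a\) has at least \(w-\lambda\) non-neighbors in \(B\); note \(w-\lambda>0\). We bound the number of bad vertices by double counting non-edges between \(A\) and \(B\). The vertex of \(B\) complete to \(A\) contributes no non-edges, and each of the other \(w-1\) vertices of \(B\) contributes at most \(\lambda\). So there are at most \(\lambda(w-1)\) non-edges in total. Each bad vertex accounts for at least \(w-\lambda\) of them, so
\[
\#\{\text{bad vertices}\}\le \floor{\frac{\lambda(w-1)}{w-\lambda}}.
\]
By the second hypothesis, \(A\) therefore contains at least \(w\) good vertices. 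Let \(A'\) be any \(w\) of them.

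Now \(H[A',B]\) has \(|A'|=|B|=w\ge2\lambda+2\), with \(d(a)\ge\lambda+1\) for \(a\in A'\) and \(d(b)\ge w-\lambda\) for \(b\in B\). Lemma~\ref{lem:degreesum} gives a Hamilton cycle, as required.

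The argument is short, and the only real step is the counting bound. The point to check is that the complete vertex is what improves \(\lambda w\) to \(\lambda(w-1)\), which matches the floor expression in the hypothesis exactly. The degenerate case \(\lambda=0\) needs no separate treatment: then every vertex of \(B\) is complete to \(A\), there are no bad vertices, and Lemma~\ref{lem:degreesum} still applies since \(w\ge2\).
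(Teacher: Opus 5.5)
Your proof is correct and follows essentially the same route as the paper. You bound the set of vertices of \(A\) with at most \(\lambda\) neighbors in \(B\) by double counting the at most \(\lambda(w-1)\) nonedges, take \(w\) of the remaining vertices, and apply Lemma~\ref{lem:degreesum} with \(s=w\).
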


\begin{proof}
Let \(L=\{a\in A:d(a,B)\le\lambda\}\).  There are at most
\(\lambda(w-1)\) nonedges, while every member of \(L\) is incident
with at least \(w-\lambda\) of them.  Hence
\(|L|\le\floor{\lambda(w-1)/(w-\lambda)}\).
Choose a \(w\)-set \(A'\subseteq A\setminus L\).
Every vertex of \(A'\) has degree at least \(\lambda+1\), and every
vertex of \(B\) has degree at least \(w-\lambda\) into \(A'\).
Lemma~\ref{lem:degreesum} gives a Hamilton cycle on \(A'\cup B\).
\end{proof}

\begin{lemma}\label{lem:endpoints}
Let $m$ be a positive integer at least 2 and let \(H[A,B]\) be bipartite with \(|A|=m+1\), \(|B|=m\),
\(d_H(a)\ge2\) for \(a\in A\), and \(d_H(b)\ge m\) for \(b\in B\).
For every two distinct \(u,v\in A\), \(H\) has a Hamilton \(u\)--\(v\)
path.
\end{lemma}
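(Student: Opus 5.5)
The plan is to reduce to the Hamilton-cycle lemmas by adding an auxiliary vertex. Form \(H'\) from \(H\) by adding a new vertex \(z\) to \(B\), adjacent exactly to \(u\) and \(v\). Then \(H'[A,B\cup\{z\}]\) is balanced with \(s=m+1\ge3\). A Hamilton cycle of \(H'\) must pass through \(z\) via its only two neighbors \(u\) and \(v\). Deleting \(z\) therefore leaves a Hamilton \(u\)--\(v\) path in \(H\). So it suffices to show that \(H'\) is Hamiltonian.

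The natural approach is to check the hypotheses of Theorem~\ref{thm:OY} for \(H'\) with \(T=A\), as in the proof of Lemma~\ref{lem:nearcomplete}.

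\textbf{Degree-sum condition.} In \(H'\), every \(a\in A\) has degree at least \(2\), and every \(b\in B\) has degree at least \(m=s-1\). So every nonedge \(ab\) with \(b\in B\) satisfies
\[
d(a)+d(b)\ge s+1=|A|+1.
\]
The problem is nonedges \(az\) with \(a\ne u,v\). Here \(d(z)=2\), so we need \(d(a)\ge s-1=m\), which is false in general. Lemma~\ref{lem:nearcomplete} also does not apply directly, because \(z\) has degree only \(2\), not \(s-1\).

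\textbf{Choice of \(T\).} Instead I will choose \(T\) to avoid these bad nonedges. Take \(T=B\cup\{z\}\); the condition \(\sigma(T)\) only looks at nonedges with an endpoint in \(T\).

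\textbf{Swapping the roles of the classes.} The difficulty is that \(z\in T\). To handle this, apply Theorem~\ref{thm:OY} with the roles of the classes swapped: \(A'=B\cup\{z\}\) and \(B'=A\), both of size \(s\), so \(|A'|\ge|B'|\) holds. Take \(T=B'=A\). The relevant nonedges are those meeting \(A\), which is all of them, so the \(az\) nonedges still appear.

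\textbf{Direct argument if the reduction fails.} Since \(\sigma\) still sees the \(az\) nonedges, I would fall back on a direct argument. Each \(b\in B\) misses at most one vertex of \(A\), because \(d(b)\ge m=|A|-1\).
\begin{enumerate}
\item Choose a vertex \(b_1\in B\) adjacent to \(u\). Such a vertex exists because \(d(u)\ge2\).
\item Apply Lemma~\ref{lem:nearcomplete} to the balanced graph \(H[A\setminus\{u\},B]\), which has \(|A\setminus\{u\}|=|B|=m\ge2\). Its hypotheses hold approximately: each \(b\in B\) loses at most one neighbor, so \(d(b)\ge m-1\). Each \(a\ne u\) keeps degree at least \(2\).
\item This gives a Hamilton cycle \(C\) on \((A\setminus\{u\})\cup B\).
\item Open \(C\) at \(v\). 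Let \(b\) be a neighbor of \(v\) on \(C\) whose other cycle-neighbor direction allows a connection to \(u\). Then the path \(u\,b\,\ldots\,v\) is a Hamilton \(u\)--\(v\) path. This works whenever one of the two \(C\)-neighbors of \(v\) is adjacent to \(u\).
\end{enumerate}

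\textbf{Main obstacle.} The hard step is ensuring that \(u\) is adjacent to a \(C\)-neighbor of \(v\). Since \(u\) has only two guaranteed neighbors, this needs care. I would try first to pre-route the cycle. Fix the edge \(u b_1\). Apply Lemma~\ref{lem:nearcomplete} (or Theorem~\ref{thm:OY} with \(T\) the whole class) to an auxiliary balanced graph in which \(u\) and \(b_1\) are contracted. Alternatively, force the edge \(v b_1\) into the cycle by deleting \(b_1\)'s other edges.

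The small case \(m=2\) is checked by hand, since every \(b\) then has degree at least \(2\) in a 3-set. The exceptional alternative of Theorem~\ref{thm:OY} is ruled out by balance, exactly as in Lemma~\ref{lem:nearcomplete}.
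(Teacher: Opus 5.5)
Your direct argument is how the paper's proof begins: delete $u$, apply Lemma~\ref{lem:nearcomplete} to $H-u$ to get a Hamilton cycle $C$, and open $C$ at $v$. That settles the case where $u$ is adjacent to one of the two $C$-neighbors $b_1,b_2$ of $v$. There is a genuine gap, though. The complementary case, which you flag as the main obstacle, is the real content of the lemma, and your proposal leaves it open.

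Your suggested repairs do not close it. Identifying $u\in A$ with $b_1\in B$ destroys bipartiteness, so neither Lemma~\ref{lem:nearcomplete} nor Theorem~\ref{thm:OY} applies to the contracted graph. Forcing an edge $vb_1$ with $b_1\in N_H(u)$ by deleting $b_1$'s other edges leaves $d(b_1)\le 2$, which breaks the requirement $d(b)\ge m-1$ of Lemma~\ref{lem:nearcomplete} once $m\ge4$. Moreover, such an edge need not exist: for $m\ge4$ one can have $d_H(u)=2$ with both neighbors of $u$ missing $v$. Your auxiliary-vertex reduction also fails, for the reason you identified yourself (the nonedges $az$), so it cannot serve as a fallback.

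The missing idea is one rotation that uses the hypothesis $d_H(b)\ge m=|A|-1$ a second time:

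\begin{enumerate}
\item If $u$ is adjacent to neither $b_1$ nor $b_2$, then $u$ is the unique non-neighbor of each $b_i$ in $A$. Hence $b_1$ is adjacent to every vertex of $A\setminus\{u\}$.
\item Pick any $y\in N_H(u)$. Then $y\notin\{b_1,b_2\}$, so $y$ lies strictly inside the $b_1$--$b_2$ arc of $C$ avoiding $v$.
\item Write that arc as $b_1\,P\,p\,y\,q\,Q\,b_2$, where $p,q\in A\setminus\{u\}$ are the $C$-neighbors of $y$.
\item Since $b_1q\in E(H)$,
\[
 u\,y\,p\,P^{-1}\,b_1\,q\,Q\,b_2\,v
\]
is a Hamilton $u$--$v$ path of $H$.
\end{enumerate}

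Without this step, or an equivalent argument, your proposal proves the lemma only in the easy case.
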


\begin{proof}
Fix distinct \(u,v\in A\), and let \(H'=H-u\).  For every
\(a\in A\setminus\{u\}\) we have \(d_{H'}(a)\ge2\), while every
\(b\in B\) satisfies \(d_{H'}(b)\ge m-1\).  Hence
Lemma~\ref{lem:nearcomplete} gives a Hamilton cycle \(C\) of \(H'\).

Let \(b_1,b_2\in B\) be the two neighbors of \(v\) on \(C\).
If \(u\) is adjacent to \(b_i\) for some \(i\in\{1,2\}\), delete
the edge \(b_iv\) from \(C\) and prepend the edge \(ub_i\).
The resulting path is a Hamilton \(u\)--\(v\) path of \(H\).

We may therefore assume that \(u\) is adjacent to neither \(b_1\)
nor \(b_2\).  Since \(d_H(b_i)\ge m\) and \(|A|=m+1\), each
\(b_i\) has at most one non-neighbor in \(A\).  Thus both \(b_1\)
and \(b_2\) are adjacent to every vertex of \(A\setminus\{u\}\).

Choose \(y\in N_H(u)\).  Since \(ub_1,ub_2\notin E(H)\), we have
\(y\notin\{b_1,b_2\}\).  Let \(p,q\in A\setminus\{u\}\) be the two
neighbors of \(y\) on \(C\), labeled so that the \(b_1\)--\(b_2\)
arc of \(C\) avoiding \(v\) has the form
\[
 b_1\,P\,p\,y\,q\,Q\,b_2,
\]
where \(P\) and \(Q\) may be trivial.  Since \(b_1\) is adjacent
to every vertex of \(A\setminus\{u\}\), in particular \(b_1q\in
E(H)\).  Therefore
\[
 u\,y\,p\,P^{-1}\,b_1\,q\,Q\,b_2\,v
\]
is a Hamilton \(u\)--\(v\) path of \(H\).
\end{proof}

The packing idea in the next two lemmas is adapted from the bipartite
path-packing lemmas of Pokrovskiy, Versteegen, and
Williams~\cite[Lemmas~2.3 and~2.4]{pokrovskiy2026proof} and their
all-order refinement by Chen and Chen~\cite[Lemma~3.5]{chen2026allorders}.
Here the paths are closed into cycles by the two preceding Hamiltonian
tools.

\begin{lemma}\label{lem:packing}
Let \(D,Y_0,Y_1\) be the classes of a red bipartite graph, where
\(|Y_0|=a_0\ge2\).  Suppose that every \(u\in Y_0\) has at most one
non-neighbor in \(D\), and every \(z\in Y_1\) has at most \(\mu\)
non-neighbors in \(D\).  Put
\(E=\{x\in D:d(x,Y_0)\le1\}\), \(D^*=D\setminus E\), and \(d=|D^*|\).
Then \(|E|\le1\) if \(a_0\ge3\), and \(|E|\le2\) if \(a_0=2\).
Assume \(d\ge a_0\), let \(h\ge1\), and put \(p=d-ha_0\).
If \(p\le0\), then \(D^*\) is covered by at most \(h\) red cycles.
If \(p>0\), put \(q=\min\{p,h\}\).  The same conclusion holds provided
\begin{equation}\label{eq:P}
 p\le q|Y_1|,\qquad d-2\mu\ge p-q,\qquad d-\mu\ge p+q.
\end{equation}
Vertices of \(Y_0\cup Y_1\) may be reused by different cycles.
\end{lemma}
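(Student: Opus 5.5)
First I bound \(|E|\) by counting nonedges between \(Y_0\) and \(D\). Each \(u\in Y_0\) has at most one non-neighbor in \(D\), so there are at most \(a_0\) such nonedges. A vertex \(x\in E\) has \(d(x,Y_0)\le1\), so it meets at least \(a_0-1\) of them. Hence \(|E|(a_0-1)\le a_0\). For \(a_0\ge3\) this gives \(|E|\le a_0/(a_0-1)<2\), so \(|E|\le1\). For \(a_0=2\) it gives \(|E|\le2\).

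For the covering I partition \(D^*\) into blocks and close each block into a red cycle through \(Y_0\), reusing \(Y_0\) in every cycle. Every \(x\in D^*\) has at least two neighbors in \(Y_0\), and every \(u\in Y_0\) has at most one non-neighbor in \(D\).

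\emph{Case \(p\le0\).} Split \(D^*\) into at most \(h\) blocks, each of size between \(2\) and \(a_0\), which is possible since \(a_0\le d\le ha_0\). Fix a block \(S\) with \(|S|=s\), and choose an \(s\)-subset \(Y'\subseteq Y_0\). In \(H[S,Y']\), each \(u\in Y'\) misses at most one vertex of \(S\), so \(d(u)\ge s-1\).
\begin{itemize}
\item If \(s=a_0\), take \(Y'=Y_0\). Then every \(x\in S\) has degree at least \(2\), and Lemma~\ref{lem:nearcomplete} gives a Hamilton cycle covering \(S\).
\item If \(s<a_0\), the choice of \(Y'\) matters, since the degree \(\ge2\) into \(Y'\) may fail. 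I would instead use Lemma~\ref{lem:endpoints} on \(|S|=|Y'|+1\) sized pieces, or simply choose the block sizes so that all blocks but one have size \(a_0\).
\item For the remaining small block, pick \(Y'\) to contain two neighbors of each of its vertices. This is possible because each \(u\in Y_0\) misses at most one vertex, so almost all of \(Y_0\) is complete to the block.
\end{itemize}
A degenerate block (a single vertex, or a single edge to \(Y_0\)) also counts as a cycle. This settles \(p\le0\).

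\emph{Case \(p>0\).} Here \(D^*\) has \(p\) more vertices than \(h\) cycles through \(Y_0\) alone can absorb. The plan uses \(q\) of the \(h\) cycles as enlarged cycles, each passing through extra vertices of \(Y_1\) as well as \(Y_0\). Distribute the \(p\) surplus vertices over these \(q\) cycles, at most \(|Y_1|\) each; this is where \(p\le q|Y_1|\) enters. Each enlarged cycle is built with Lemma~\ref{lem:smallclass} or Lemma~\ref{lem:defectselection}, viewing the bipartite graph between the block in \(D^*\) and the chosen subset of \(Y_0\cup Y_1\). The degree condition for \(Y_1\)-vertices comes from their at most \(\mu\) non-neighbors in \(D\). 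To make it hold, select the block as a set of \(D^*\)-vertices avoiding the non-neighborhoods of the chosen \(Y_1\)-vertices, and this is where \(d-\mu\ge p+q\) and \(d-2\mu\ge p-q\) are used. Concretely, the vertices assigned between consecutive \(Y_1\)-vertices in a cycle must be common neighbors of two \(Y_1\)-vertices, giving the \(2\mu\) term. The vertices next to a single \(Y_1\)-vertex need only avoid \(\mu\) vertices, giving the \(\mu\) term. The \(Y_0\)-vertices, being almost complete, absorb the rest and close the cycles.

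The main obstacle is the greedy assignment in the \(p>0\) case. I would order it as follows:
\begin{enumerate}
\item First fix the \(2(p-q)\)-type slots that need common neighbors of two \(Y_1\)-vertices, using \(d-2\mu\ge p-q\).
\item Then fill the \(p+q\) single-neighbor slots, using \(d-\mu\ge p+q\).
\item Finally put the remaining \(D^*\)-vertices into \(Y_0\)-segments of size at most \(a_0\) per cycle, closing each cycle with Lemma~\ref{lem:endpoints} as a Hamilton path between prescribed ends.
\end{enumerate}
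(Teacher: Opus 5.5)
\emph{Verdict.} Your bound on $|E|$ is correct, and your $p>0$ plan follows the paper's route: alternating $D^*$--$Y_1$ paths, internal common-neighbor representatives chosen first, then endpoints, then closing through $Y_0$ by Lemma~\ref{lem:endpoints}. However, the case $p\le0$ has a step that fails.

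\emph{The gap in the case $p\le0$.} For a short block $S$ with $2\le|S|<a_0$, you claim there is a set $Y'\subseteq Y_0$ containing two neighbors of every vertex of $S$, because ``almost all of $Y_0$ is complete to the block.'' That is false, since the at most $a_0$ nonedges between $D$ and $Y_0$ can all fall on $S$. Take $a_0=3$, $Y_0=\{u_1,u_2,u_3\}$, and $x_1,x_2\in D^*$ whose only non-neighbors in $Y_0$ are $u_1$ and $u_2$, respectively. Then $u_3$ is their only common neighbor, so no cycle in the bipartite graph between $\{x_1,x_2\}$ and $Y_0$ contains both, whatever $Y'$ you pick. With $d=5$ and $h=2$ (so $p=-1$), your split into blocks of sizes $3$ and $2$ can produce exactly this block.

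\emph{The missing idea.} The cycles may also overlap in $D^*$. Pad the last block with already covered vertices of $D^*$ (possible since $d\ge a_0$), so that every block has exactly $a_0$ vertices. Then apply Lemma~\ref{lem:nearcomplete} to each block together with all of $Y_0$. Alternatively, put the at most $a_0$ vertices of $D^*$ that have a non-neighbor in $Y_0$ into full blocks, so the short block is complete to $Y_0$.

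\emph{Bookkeeping in the case $p>0$.} The counts need correcting.
\begin{itemize}
\item There are $p-q$ internal slots, chosen using $d-2\mu\ge p-q$. After that there are only $2q$ endpoint slots, not $p+q$. Filling $p+q$ further slots would require $d-\mu\ge2p$, whereas the hypothesis $d-\mu\ge p+q$ says exactly $(d-\mu)-(p-q)\ge2q$. So the $q$ paths use $p+q$ vertices of $D^*$.
\item Each path must then receive exactly $a_0-1$ unused vertices, not ``at most $a_0$.'' Together with its two endpoints these form the $(a_0+1)$-class of Lemma~\ref{lem:endpoints} against $Y_0$.
\item This leaves exactly $d-(p+qa_0)=(h-q)a_0$ vertices. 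They are covered by $h-q$ balanced cycles via Lemma~\ref{lem:nearcomplete}, so no short block arises here.
\item Drop the suggestion of Lemmas~\ref{lem:smallclass} and~\ref{lem:defectselection} for the enlarged cycles. A $Y_1$-vertex may miss $\mu$ vertices and no vertex is assumed complete, so their hypotheses are not available.
\end{itemize}
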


\begin{proof}
There are at most \(a_0\) nonedges between \(D\) and \(Y_0\).
Every member of \(E\) is incident with at least \(a_0-1\) of them,
which gives the stated bound on \(|E|\).  Every vertex of \(D^*\) has
at least two red neighbors in \(Y_0\).

If \(p\le0\), split \(D^*\) into at most \(h\) blocks of size at most
\(a_0\), padding the last block with previously used vertices.
Lemma~\ref{lem:nearcomplete}, applied to each padded block and \(Y_0\),
closes every block.

Suppose \(p>0\).  The first inequality in \eqref{eq:P} allows positive
integers \(b_1,\ldots,b_q\le|Y_1|\) with sum \(p\). Take an ordered \(b_i\)-tuple of distinct \(Y_1\)-vertices
(different tuples may overlap). To form $q$ red paths, use common red neighbors in \(D^*\) between
successive vertices, and put a
red neighbor in \(D^*\) at each endpoint. There are \(p-q\)
internal representatives. Each internal candidate set has size at
least \(d-2\mu\ge p-q\), so they can be chosen distinctly. After
they are removed, every endpoint candidate set has at least
\((d-\mu)-(p-q)\ge2q\) available vertices. Thus the \(2q\) endpoint representatives can
also be chosen distinctly. The resulting \(q\) alternating paths
use \(p+q\) distinct vertices of \(D^*\).

For each path, add \(a_0-1\) unused vertices of \(D^*\).  Its two
endpoints together with these vertices form the \((a_0+1)\)-class in
Lemma~\ref{lem:endpoints}, while \(Y_0\) is the \(a_0\)-class.
The resulting prescribed-endpoint path through \(Y_0\) closes the
original path into a cycle.  The \(q\) cycles use \(p+qa_0\) vertices
of \(D^*\).  Exactly \((h-q)a_0\) remain, and
Lemma~\ref{lem:nearcomplete} covers them in \(h-q\) balanced cycles.
\end{proof}

\begin{lemma}\label{lem:blockcover}
Let \(H[A,B]\) be bipartite, where \(|B|=b\ge3\), and suppose every
vertex of \(B\) has at most one non-neighbor in \(A\).  Put
\(E=\{a\in A:d(a,B)\le1\}\).  Then \(|E|\le1\).  If
\(|A\setminus E|\ge b\), then \(A\cup B\) is covered by at most
\(|E|+\ceil{(|A|-|E|)/b}\) red cycles.
\end{lemma}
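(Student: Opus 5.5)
First we bound \(E\) by double counting nonedges between \(A\) and \(B\).  Each vertex of \(B\) has at most one non-neighbor in \(A\), so there are at most \(b\) nonedges.  Each member of \(E\) has at most one neighbor in \(B\), so it is incident with at least \(b-1\ge2\) of these nonedges, and distinct members of \(E\) are incident with disjoint sets of nonedges.  If \(|E|\ge2\), this gives at least \(2(b-1)>b\) nonedges, since \(b\ge3\).  Hence \(|E|\le1\).  It also follows that every vertex of \(A^*=A\setminus E\) has at least two neighbors in \(B\).

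For the cover, we first handle \(E\).  If \(E=\{e\}\), we cover \(e\) by one degenerate cycle: the single vertex \(e\), or the edge \(e b\) when \(e\) has a neighbor \(b\in B\).  This accounts for the term \(|E|\).

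Next we cover \(A^*\cup B\).  Write \(|A^*|=kb+r\) with \(0\le r<b\), and set \(t=\ceil{|A^*|/b}\).  Since \(|A^*|\ge b\), we have \(t\ge1\), and we may split \(A^*\) into \(t\) blocks \(A_1,\dots,A_t\), each of size exactly \(b\).  When \(r>0\), the last block is padded with \(b-r\) vertices already used in earlier blocks; this is possible because \(|A^*|\ge b\).  For each block we apply Lemma~\ref{lem:nearcomplete} to \(H[A_i,B]\) with \(s=b\ge3\).
\begin{itemize}
\item Each \(a\in A_i\subseteq A^*\) has \(d(a,B)\ge2\).
\item Each \(b'\in B\) has at most one non-neighbor in \(A\), hence at most one in \(A_i\), so \(d(b',A_i)\ge b-1\).
\end{itemize}
The lemma then gives a Hamilton cycle on \(A_i\cup B\).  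Together these \(t\) cycles cover \(A^*\cup B\), and the first one already covers all of \(B\).

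The total number of cycles is \(|E|+t=|E|+\ceil{(|A|-|E|)/b}\), as required.  The colour is red because \(H\) is the red bipartite graph in this application.  The only delicate point is the padding step, which needs \(|A^*|\ge b\) so that each block has exactly \(b\) distinct vertices.  This is exactly the hypothesis \(|A\setminus E|\ge b\).  No step seems hard; it is essentially the case \(p\le0\) of Lemma~\ref{lem:packing}, with \(B\) itself also covered.
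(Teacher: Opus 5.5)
Your proof is correct and follows the paper's argument exactly. The steps are the same: the nonedge count giving \(|E|\le1\), blocks of size \(b\) in \(A\setminus E\) with the last block padded by vertices from an earlier block, Lemma~\ref{lem:nearcomplete} applied to each block, and a singleton for the possible vertex of \(E\). You spell out details the paper leaves implicit, such as the inequality \(2(b-1)>b\) and the degree checks needed for Lemma~\ref{lem:nearcomplete}.
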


\begin{proof}
The nonedge count gives \(|E|\le1\).  Partition \(A\setminus E\) into
blocks of size at most \(b\); because \(|A\setminus E|\ge b\), the
last block can be padded to size \(b\) with vertices of an earlier
block.  Lemma~\ref{lem:nearcomplete} gives one cycle for each block.
Cover the possible member of \(E\) by a singleton.
\end{proof}

\section{Minimum-counterexample structure}\label{sec:minimal}

The minimum-counterexample framework and the successor construction
below are cycle analogues of ideas developed for same-color path
covers by Pokrovskiy, Versteegen, and Williams~\cite{pokrovskiy2026proof}
and Chen and Chen~\cite[Section~3]{chen2026allorders}.  The use of
opposite-color cycle intersections and the cycle-closing arguments are
specific to the present setting.

Assume the theorem false and choose a counterexample of minimum order
\(n\).  The cases \(n\le3\) are immediate.  When \(n=4\), one color
contains either two independent edges or a triangle, so at most two
cycles of that color suffice.  When \(n=5\), the two color graphs
cannot both be forests; a monochromatic cycle and at most two
singletons suffice.  We also use the theorem of Faudree, Lesniak, and
Schiermeyer~\cite{faudree2009circumference} that every red--blue \(K_n\), \(n\ge6\), has a
monochromatic cycle of order at least \(\ceil{2n/3}\).  For
\(6\le n\le8\), such a cycle and at most two singletons give the
desired cover.  Hence \(n\ge9\).

Write
\begin{equation}\label{eq:nkr}
 k=\ceil{\sqrt n},\qquad n=(k-1)^2+r,\qquad1\le r\le2k-1.
\end{equation}
Choose a longest monochromatic cycle and call its color blue.  Denote
it by \(C\), orient it, and put \(X=V(C)\),
\(Y=V(K_n)\setminus X\), and \(w=|Y|\).
Observe that if \(w\le k-1\), then \(C\) and the \(w\) outside singletons give a blue cover with at most \(k\) cycles, this, together with the long-cycle theorem, gives
\begin{equation}\label{eq:long}
 |X|\ge\ceil{2n/3},\qquad k\leq w\le\floor{n/3}.
\end{equation}
For a vertex \(z\in V(K_n)\), let \(N^b(z)\) and \(N^r(z)\) denote its blue and red neighborhoods, respectively. For \(A\subseteq V(K_n)\), put
\[
N^b(z,A)=N^b(z)\cap A,\qquad
N^r(z,A)=N^r(z)\cap A,
\]
and
\[
d^b(z,A)=|N^b(z,A)|,\qquad
d^r(z,A)=|N^r(z,A)|.
\]
Let \(Y_0=\{z\in Y:\db(z,X)\le1\}\), \(Y_1=Y\setminus Y_0\), and
\(a_i=|Y_i|\). Lemma~\ref{lem:blockcover} applied to the red graph between $X$ and $Y$ gives us $a_1\geq1$.

The next lemma is the cycle counterpart of
\cite[Lemma~3.3]{pokrovskiy2026proof}; see also
\cite[Lemma~2.5]{chen2026allorders}.

\begin{lemma}\label{lem:easyblue}
There is a blue cover with at most \(1+a_0+\ceil{a_1/2}\) cycles.
Consequently
\begin{equation}\label{eq:basic}
 a_0+\ceil{a_1/2}\ge k.
\end{equation}
\end{lemma}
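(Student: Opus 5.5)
The plan is to build the cover around \(C\). The vertices of \(Y_0\) are covered by \(a_0\) singletons. The vertices of \(Y_1\) are paired up, and each pair is absorbed into a single blue cycle, in the spirit of \cite[Lemma~3.3]{pokrovskiy2026proof}. The inequality \eqref{eq:basic} then follows at once: \(K_n\) is a minimum counterexample, so every blue cover uses at least \(k+1\) cycles, and hence \(1+a_0+\ceil{a_1/2}\ge k+1\).

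Fix a pairing of \(Y_1\) into \(\floor{a_1/2}\) pairs \(\{y,y'\}\), with one leftover vertex \(y^*\) if \(a_1\) is odd. Each \(y\in Y_1\) has at least two blue neighbors on \(C\). For a pair \(\{y,y'\}\), I choose distinct blue neighbors \(x\in\Nb(y,X)\) and \(x'\in\Nb(y',X)\); this is possible because each set has size at least \(2\). The cycle \(C\) splits into two arcs between \(x\) and \(x'\), and either arc, say \(P\), together with the blue path \(x\,y\,\cdot\,y'\,x'\), forms a blue closed walk. To make it a genuine cycle I need \(yy'\) to be blue. If \(yy'\) is red, I instead use the path \(x\,y\) and \(x'\,y'\) separately and look for a blue common structure, which is awkward.

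A cleaner route is the following. For each \(y\in Y_1\) with two blue neighbors \(x_1,x_2\in X\), the cycle \(x_1\,y\,x_2\,P\) is blue, where \(P\) is an arc of \(C\) from \(x_2\) back to \(x_1\). So each single vertex of \(Y_1\) lies in a blue cycle, but this only gives \(a_1\) cycles. To achieve \(\ceil{a_1/2}\), I use the two vertices of a pair to cover complementary arcs. Take \(x_1,x_2\in\Nb(y)\) and \(x_3,x_4\in\Nb(y')\). The cycle through \(y\) uses one arc between \(x_1\) and \(x_2\), and the cycle through \(y'\) uses one arc between \(x_3\) and \(x_4\). If I choose arcs so that each of these two cycles carries half of \(C\), I can drop \(C\) itself.

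Thus the real plan is to discard \(C\) whenever \(a_1\ge1\) and cover \(X\cup Y_1\) by \(\ceil{a_1/2}+1\) cycles. There is one cycle per pair \(\{y,y'\}\): the closed walk \(y\,x_1\,\overrightarrow{C}[x_1,x_3]\,x_3\,y'\,x_4\,\overleftarrow{C}\ldots\), which is a blue cycle traversing \(y\), an arc of \(C\), then \(y'\), then another arc, and back to \(y\). Concretely, with \(x_1,x_2\in\Nb(y)\) and \(x_3,x_4\in\Nb(y')\) all distinct and in cyclic order \(x_1,x_3,x_4,x_2\) (or some suitable interleaving), the walk
\[
y\,x_1\,\overrightarrow{C}\,x_3\,y'\,x_4\,\overrightarrow{C}\,x_2\,y
\]
is a blue cycle covering all of \(X\) except the two arcs \((x_3,x_4)\) and \((x_2,x_1)\). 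Together with the cycle \(C\), and one extra cycle for a leftover \(y^*\), this yields \(1+\floor{a_1/2}+(a_1\bmod 2)=1+\ceil{a_1/2}\) cycles covering \(X\cup Y_1\). When the neighbors cannot be chosen to be four distinct vertices, I handle the degenerate case of shared neighbors \(x_2=x_3\) with the blue path \(y\,x_2\,y'\) and an arc of \(C\) from \(x_4\) to \(x_1\).

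The main obstacle is guaranteeing that this construction yields a genuine cycle regardless of the cyclic order of the chosen neighbors. I expect it to go through because any two blue neighbors of \(y\) and any two of \(y'\) can be connected by using a suitable arc: pick \(x_1\in\Nb(y)\) and \(x_3\in\Nb(y')\) with \(x_1\ne x_3\), and take the cycle \(y\,x_1\,(\text{arc})\,x_3\,y'\,x_4\,(\text{arc})\,x_2\,y\) with arcs chosen as disjoint segments of \(C\). Since we keep \(C\) in the cover anyway, only disjointness of the arcs matters, not how much of \(C\) they cover. Choosing the arc from \(x_1\) toward \(x_3\) that avoids \(x_2\) and \(x_4\) is always possible for some labeling of the four points, and the short case check on their cyclic order is the only real work.
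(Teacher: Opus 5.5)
Your final argument is correct and is essentially the paper's: you keep \(C\), cover \(Y_0\) and any leftover vertex of \(Y_1\) by singletons, and put each pair \(\{y,y'\}\subseteq Y_1\) on one blue cycle through two disjoint arcs of \(C\). The only difference is how that cycle is obtained: the paper uses the \(2\)-connectivity of \(C\) together with \(y,y'\), while your cyclic-order check also goes through (four distinct points, interleaved or not; one shared neighbour; or \(\Nb(y,X)=\Nb(y',X)=\{x_1,x_2\}\), which gives \(y\,x_1\,y'\,x_2\,y\)). The exploratory detours about a red edge \(yy'\) and about dropping \(C\) are unnecessary and should be removed.
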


\begin{proof}
We first show that any two vertices \(z,z'\in Y_1\) lie on a common
blue cycle.  Let \(H\) be the blue graph consisting of \(C\), the
vertices \(z,z'\), and all blue edges from \(z\) and \(z'\) to \(C\).
Since \(z,z'\in Y_1\), each of them has at least two blue neighbors
on \(C\).

We claim that \(H\) is \(2\)-connected.  If one of \(z,z'\) is
deleted, the remaining outside vertex is still attached to the cycle
\(C\), so the graph remains connected.  If a vertex \(v\in V(C)\)
is deleted, then \(C-v\) is a path, and each of \(z,z'\) still has at
least one blue neighbor on this path.  Hence \(H-v\) is connected as
well.  Thus \(H\) is \(2\)-connected.

It is standard that any two vertices of a \(2\)-connected graph lie
on a common cycle.  Hence \(z\) and \(z'\) lie on a common blue cycle.

Now pair the vertices of \(Y_1\).  Each pair can be covered by one
blue cycle; if \(a_1\) is odd, cover the remaining vertex by a
singleton cycle.  Cover every vertex of \(Y_0\) by a singleton cycle
and include \(C\).  This gives a blue cover with at most
\[
 1+a_0+\ceil{a_1/2}
\]
cycles.

Since the coloring is a counterexample, this number must be at least
\(k+1\).  Therefore
\[
 a_0+\ceil{a_1/2}\ge k.
\]
\end{proof}

The following intersection bound is a cycle analogue of the
minimality argument used by Pokrovskiy, Versteegen, and Williams
in~\cite[Lemma~3.1]{pokrovskiy2026proof}.

\begin{lemma}\label{lem:intersection}
If \(R\) is a red cycle and \(B\) a blue cycle, then
\(|V(R)\cap V(B)|\le r-1\).
\end{lemma}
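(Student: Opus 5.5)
The idea is to use the minimality of the counterexample directly: if a red cycle and a blue cycle share many vertices, we can delete exactly \(r\) shared vertices, cover the rest with few cycles by minimality, and put the deleted vertices back using whichever of the two cycles has the right color.

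Suppose, for a contradiction, that \(R\) is a red cycle and \(B\) a blue cycle with \(|V(R)\cap V(B)|\ge r\). Choose a set \(S\subseteq V(R)\cap V(B)\) with \(|S|=r\). By \eqref{eq:nkr} the graph \(K_n-S\) is a red--blue complete graph on
\[
 n'=n-r=(k-1)^2
\]
vertices. Since \(n\ge9\) we have \(k\ge3\), so \(n'\ge4\ge1\). Also \(n'<n\) because \(r\ge1\).

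Because \(n\) is the minimum order of a counterexample, Theorem~\ref{thm:main} holds for \(K_n-S\). Since \(\ceil{\sqrt{n'}}=k-1\), the graph \(K_n-S\) has a cover by at most \(k-1\) monochromatic cycles, all of one color. These cycles remain monochromatic cycles of the same color in \(K_n\). This also holds for the degenerate singleton and single-edge cycles.

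Next we restore the vertices of \(S\), all of which lie on both \(R\) and \(B\).
\begin{itemize}
\item If the cover of \(K_n-S\) is red, add \(R\).
\item If the cover is blue, add \(B\).
\end{itemize}
In either case the added cycle has the same color as the cover and contains every vertex of \(S\). Since cycles in a cover may intersect, the result is a same-color cover of \(K_n\) by at most \((k-1)+1=k=\ceil{\sqrt n}\) monochromatic cycles. This contradicts the choice of \(K_n\) as a counterexample, so \(|V(R)\cap V(B)|\le r-1\).

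There is no substantial obstacle here. The only points to check are that \(n-r\) is exactly a perfect square \((k-1)^2\), so that the ceiling drops by one, and that \(1\le n-r<n\), so that minimality applies. Both follow immediately from \eqref{eq:nkr} and \(n\ge9\).
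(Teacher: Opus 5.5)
Your proposal is correct and follows the paper's proof exactly: delete an $r$-subset of $V(R)\cap V(B)$, use minimality on the remaining $(k-1)^2$ vertices to get a same-color cover by at most $k-1$ cycles, and add $R$ or $B$ according to its color. Your extra checks that $1\le n-r<n$ and $\lceil\sqrt{(k-1)^2}\rceil=k-1$ are details the paper leaves implicit.
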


\begin{proof}
If \(S\subseteq V(R)\cap V(B)\) has size \(r\), then
\(n-r=(k-1)^2\).  By minimality, \(K_n-S\) has a same-color cover
with at most \(k-1\) cycles.  Add \(R\) if that cover is red and add
\(B\) if it is blue.
\end{proof}

For \(z\in Y\), let $U_z=$$\Nb(z,X)$ and let
\(S_z\) be the set of successors on \(C\).

\begin{lemma}\label{lem:successor}
For every \(z\in Y\):
\begin{enumerate}[label=(\roman*)]
\item \(U_z\) contains no two consecutive vertices of \(C\);
\item \(S_z\) is a red clique and \(z\) is red-complete to \(S_z\);
\item \(|S_z|\le r-1\);
\item every \(z'\in Y\setminus\{z\}\) has at most one blue neighbor
in \(S_z\).
\end{enumerate}
Moreover \(U_z\cap S_z=\varnothing\).
\end{lemma}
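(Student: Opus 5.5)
All four parts, and the final disjointness claim, come from the maximality of \(C\) as a longest monochromatic cycle together with Lemma~\ref{lem:intersection}. Throughout, write \(u^+\) for the successor of \(u\) on \(C\).

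For (i), suppose \(u,u^+\in U_z\). Then \(z\) can be inserted between \(u\) and \(u^+\), giving a blue cycle on \(X\cup\{z\}\). This contradicts the choice of \(C\) as a longest monochromatic cycle. The final claim \(U_z\cap S_z=\varnothing\) is just a restatement of (i): a vertex lying in both would be the successor of a blue neighbor of \(z\) while itself being a blue neighbor of \(z\).

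For (ii), the argument is the usual rotation. Suppose \(u,v\in U_z\) are distinct and \(u^+v^+\) is blue. Then
\[
z\,u\,C^{-}\,v^+\,u^+\,C\,v\,z
\]
is a blue cycle on \(X\cup\{z\}\). Here one traverses \(C\) backwards from \(u\) to \(v^+\), uses the edge \(v^+u^+\), traverses \(C\) forwards from \(u^+\) to \(v\), and returns to \(z\). This is longer than \(C\), a contradiction. To see that \(z\) is red to every \(u^+\in S_z\), note that \(zu^+\) is red by (i).

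For (iii), the set \(S_z\cup\{z\}\) is a red clique, so \(S_z\) spans a red cycle when \(|S_z|\ge3\), and a degenerate red cycle when \(|S_z|\le2\). This red cycle lies inside \(V(C)\). If \(|S_z|\ge r\), taking a red cycle on exactly \(r\) vertices of \(S_z\) (or on all of \(S_z\)) meets the blue cycle \(C\) in at least \(r\) vertices, contradicting Lemma~\ref{lem:intersection}. Hence \(|S_z|\le r-1\).

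For (iv), which is the main step, suppose \(z'\ne z\) has two blue neighbors \(u^+,v^+\in S_z\), where \(u,v\in U_z\). I will build a blue cycle on \(X\cup\{z,z'\}\):
\[
z\,u\,C^{-}\,v^+\,z'\,u^+\,C\,v\,z.
\]
This uses \(C^-\) from \(u\) back to \(v^+\), then the path \(v^+ z' u^+\), then \(C\) from \(u^+\) to \(v\). The two arcs used are exactly the arcs of \(C\) split by removing the edges \(uu^+\) and \(vv^+\), so every vertex of \(X\) is covered exactly once. This cycle is longer than \(C\), a contradiction. The only care needed is the degenerate case \(v=u^+\) (or \(u=v^+\)), but then \(u,u^+\in U_z\) are consecutive, which is excluded by (i).
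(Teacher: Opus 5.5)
Your proposal is correct and follows essentially the same route as the paper: (i) by inserting \(z\) between consecutive blue neighbours, (ii) and (iv) by the standard two-edge rotation through \(z\) (respectively through \(z\) and \(z'\)), and (iii) via Lemma~\ref{lem:intersection}. The only cosmetic difference is in (iii), where you take a red cycle on \(r\) vertices of the clique \(S_z\) itself instead of a cycle in \(S_z\cup\{z\}\); both give the same contradiction.
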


\begin{proof}
Write \(C=v_1v_2\cdots v_\ell v_1\), with indices modulo \(\ell\).
If \(v_i,v_{i+1}\in U_z\), replacing \(v_iv_{i+1}\) by
\(v_i z v_{i+1}\) extends \(C\).  This proves (i) and
\(U_z\cap S_z=\varnothing\).

Take distinct \(v_i,v_j\in U_z\).  Removing
\(v_iv_{i+1}\) and \(v_jv_{j+1}\), and adding \(v_i z v_j\) and
\(v_{i+1}v_{j+1}\), gives a blue cycle through \(V(C)\cup\{z\}\) if
\(v_{i+1}v_{j+1}\) is blue.  Thus \(S_z\) is a red clique.
Also \(zv_{i+1}\) is red by (i).  If \(z'\ne z\) were blue-adjacent
to two vertices \(v_{i+1},v_{j+1}\in S_z\), use instead
\(v_i z v_j\) and \(v_{i+1}z'v_{j+1}\) to extend \(C\) through two
outside vertices.  This proves (iv).
Finally, if \(|S_z|\ge r\), the red clique \(S_z\cup\{z\}\) contains
a red cycle meeting \(C\) in at least \(r\) vertices, contrary to
Lemma~\ref{lem:intersection}.
\end{proof}

Choose \(y\in Y_1\) with
\(\mu=\db(y,X)=\max_{z\in Y_1}\db(z,X)\), and put \(U=U_y\),
\(S=S_y\).  Thus \(|U|=|S|=\mu\),
\(U\cap S=\varnothing\), and \(2\le\mu\le r-1\).

\begin{lemma}\label{lem:cyclic}
If \(w-1\ge r-\mu\) and \(|X|\ge r+2\mu\), then the coloring is not
a minimum counterexample.
\end{lemma}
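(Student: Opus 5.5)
The plan is to contradict Lemma~\ref{lem:intersection}. I will build a red cycle that meets \(X=V(C)\) in exactly \(r\) vertices: all \(\mu\) vertices of \(S\), plus \(t=r-\mu\) further vertices of \(X\setminus S\). The extra vertices are reached by detours through \(t\) vertices of \(Y\setminus\{y\}\). Since \(\mu\le r-1\), we have \(t\ge1\), and the hypothesis \(w-1\ge r-\mu\) supplies distinct vertices \(z_1,\dots,z_t\in Y\setminus\{y\}\).

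Two degree facts drive the construction. First, every \(z\in Y\) has at most \(\mu\) blue neighbours in \(X\): for \(z\in Y_1\) this is the maximality of \(\mu\), and for \(z\in Y_0\) it is \(\db(z,X)\le1<\mu\). Second, by Lemma~\ref{lem:successor}(ii), \(S\) is a red clique, \(y\) is red-complete to \(S\), and \(U\cap S=\varnothing\). By Lemma~\ref{lem:successor}(iv), \(z_1\) has at most one blue neighbour in \(S\), so it has at least \(\mu-1\ge1\) red neighbours in \(S\).

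The cycle I aim for is
\[
 y\,s_1\,s_2\cdots s_\mu\,z_1\,x_1\,z_2\,x_2\cdots z_t\,x_t\,y .
\]
Here \(s_1,\dots,s_\mu\) is an ordering of \(S\) chosen so that \(s_\mu\) is a red neighbour of \(z_1\), and the \(x_i\) are distinct vertices of \(X\setminus S\). For \(i<t\), \(x_i\) must be a common red neighbour of \(z_i\) and \(z_{i+1}\), and \(x_t\) a common red neighbour of \(z_t\) and \(y\). The edges \(ys_1\) and \(s_js_{j+1}\) are red by the clique property and red-completeness.

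The only real step, and the one that uses \(|X|\ge r+2\mu\), is choosing the \(x_i\) distinctly. We have \(|X\setminus S|\ge r+\mu\). Two vertices of \(Y\) together have at most \(2\mu\) blue neighbours in \(X\), so each candidate set for \(x_i\) in \(X\setminus S\) has size at least \(r+\mu-2\mu=r-\mu=t\). Indeed, for \(x_t\) the blue neighbours of \(y\) form \(U\), which is disjoint from \(S\), so the count is the same. With \(t\) candidate sets each of size at least \(t\), a greedy choice yields distinct representatives.

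The resulting closed walk is then a genuine red cycle, since all its vertices are distinct. It contains \(S\cup\{x_1,\dots,x_t\}\), which is \(\mu+t=r\) vertices of \(V(C)\). This contradicts Lemma~\ref{lem:intersection}, so the coloring is not a minimum counterexample.

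I expect the main obstacle to be only the bookkeeping: making sure the chosen \(x_i\) avoid \(S\) while still meeting the size bound, which is exactly where the hypothesis \(|X|\ge r+2\mu\) is consumed.
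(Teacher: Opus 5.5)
Your proposal is correct and is essentially the paper's argument: it uses the same red cycle, traversed in the opposite direction, through the red clique \(S\), through \(t=r-\mu\) vertices of \(Y\setminus\{y\}\), and through distinct common red neighbours in \(X\setminus S\) guaranteed by \(|X\setminus S|-2\mu\ge r-\mu\). It then contradicts Lemma~\ref{lem:intersection}. Your remark that \(U\cap S=\varnothing\) is needed for the choice of \(x_t\) is superfluous, because the \(2\mu\) bound on blue neighbours already suffices.
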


\begin{proof}
Put \(a=r-\mu\), choose distinct
\(z_1,\ldots,z_a\in Y\setminus\{y\}\), set \(z_0=y\), and let
\(W=X\setminus S\).  For every \(i\),
\[
 |\Nr(z_{i-1})\cap\Nr(z_i)\cap W|
 \ge |W|-2\mu=|X|-3\mu\ge a.
\]
Choose distinct representatives \(x_i\) and obtain the red path
\(yx_1z_1x_2z_2\cdots x_az_a\).
By Lemma~\ref{lem:successor}(iv), \(z_a\) has a red neighbor in
\(S\).  A Hamilton path through the red clique \(S\), with its other
end joined to \(y\), closes a red cycle meeting \(C\) in
\(a+\mu=r\) vertices, contrary to
Lemma~\ref{lem:intersection}.
\end{proof}

\begin{lemma}\label{lem:lowdefect}
If \(a_0\ge r-\mu\) and \(|X|\ge r+\mu+1\), then the coloring is not
a minimum counterexample.
\end{lemma}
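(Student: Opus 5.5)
We mimic the proof of Lemma~\ref{lem:cyclic}, replacing the outside vertices \(z_1,\ldots,z_a\) by vertices of \(Y_0\). These have at most one blue neighbor in \(X\), so they behave almost like red-complete vertices; this is why the weaker bound \(|X|\ge r+\mu+1\) should suffice. The target is again a red cycle meeting \(C\) in exactly \(r\) vertices, contradicting Lemma~\ref{lem:intersection}. Concretely, put \(a=r-\mu\ge1\) and choose distinct \(z_1,\ldots,z_a\in Y_0\), which is possible since \(a_0\ge a\). Note that \(y\in Y_1\), so \(y\ne z_i\). Set \(W=X\setminus S\), so that \(|W|=|X|-\mu\ge r+1\).

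\emph{Building the path.} We form the red path \(y\,x_1\,z_1\,x_2\,z_2\cdots x_a\,z_a\) with distinct \(x_i\in W\), where \(x_i\) is a common red neighbor of \(z_{i-1}\) and \(z_i\) (with \(z_0=y\)). The first candidate set is \(N^r(y)\cap N^r(z_1)\cap W\). Since \(y\) is blue only to the \(\mu\) vertices of \(U\), this set has size at least \(|W|-\mu-1\ge |X|-2\mu-1\). For \(i\ge2\), both endpoints lie in \(Y_0\), so the candidate set has size at least \(|W|-2\ge |X|-\mu-2\ge r-1\ge a-1+\mu\).

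The greedy choice must be made carefully. Choose \(x_2,\ldots,x_a\) first, which is possible since each of their candidate sets has size at least \(r-1\ge a-1\). Choose \(x_1\) last. The first set could be small when \(\mu\) is large, so here we exploit \(U\cap S=\varnothing\) and count inside \(W\): the set \(W\setminus U\) has size at least \(|X|-2\mu\). If this turns out too small, we instead handle \(y\) differently, as described next.

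\emph{Closing the cycle.} Closing the cycle needs \(z_a\) to have a red neighbor in \(S\). Since \(z_a\) has at most one blue neighbor in \(X\) and \(\mu\ge2\), this holds automatically. Take such a neighbor \(s\in S\), traverse a Hamilton path of the red clique \(S\) starting at \(s\) and ending at some \(s'\in S\), and return to \(y\) via the red edge \(s'y\) from Lemma~\ref{lem:successor}(ii). The resulting red cycle contains \(x_1,\ldots,x_a\) and all of \(S\), hence meets \(C\) in \(a+\mu=r\) vertices. When \(\mu\ge2\) we can pick \(s\ne s'\); when \(|S|=\mu\) is small the Hamilton path is trivial, so no trouble arises.

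\emph{Main obstacle.} The delicate step is making the distinct-representative choice for \(x_1\) work with only \(|X|\ge r+\mu+1\) rather than \(r+2\mu\). I would avoid needing a common neighbor of \(y\) and \(z_1\) by rearranging the cycle so that \(y\) is adjacent only to \(S\) on both sides. That is, traverse \(y\), then a vertex \(s_1\in S\), then \(x_1\) joined to \(z_1\), and so on, until \(z_a\) connects back through the rest of \(S\) to \(y\). Concretely, the cycle is
\[y\,s_1\,x_1\,z_1\,x_2\cdots z_a\,s\,(S\text{-path})\,s'\,y,\]
where \(x_1\) is a red neighbor of \(s_1\) in \(W\) as well as of \(z_1\). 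Neighbors within \(S\) are handled by the clique property.

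If \(S\)-vertices have few red neighbors in \(W\), this rearrangement fails. In that case I would revert to the first construction, counting \(|W\setminus U|-1\ge |X|-2\mu-1\) and treating \(\mu\) large by case analysis. The \(Y_0\) vertices contribute at most \(2\) exclusions each, which gives the needed slack \(|X|-\mu-2\ge r-1\ge a\) for all later links. I expect this \(x_1\)/endpoint bookkeeping to be where the hypothesis \(|X|\ge r+\mu+1\) is exactly tight.
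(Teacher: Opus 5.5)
Your first construction is exactly the paper's argument: the red path \(y\,x_1z_1x_2\cdots x_az_a\) with \(z_i\in Y_0\) and \(x_i\in W=X\setminus S\), closed through the red clique \(S\). As written, however, the proof is not complete. At the one step where the hypothesis \(|X|\ge r+\mu+1\) is actually used, you stop, say the first candidate set ``could be small when \(\mu\) is large,'' and defer to fallbacks. The missing line is
\[
|N^r(y)\cap N^r(z_1)\cap W|\ \ge\ |X|-2\mu-1\ \ge\ (r+\mu+1)-2\mu-1\ =\ r-\mu\ =\ a .
\]
Every later set has size at least \(|X|-\mu-2\ge r-1\ge a\). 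So all \(a\) candidate sets have at least \(a\) elements, distinct representatives can be chosen greedily in any order (your order, with \(x_1\) last, also works), and no case analysis in \(\mu\) is needed. This is precisely where the hypothesis is tight, as you suspected, but you needed to carry out the computation rather than treat it as possibly failing.

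You should discard the fallbacks rather than keep them as a safety net, because they do not work. The rearranged cycle \(y\,s_1\,x_1\,z_1\cdots\) requires \(x_1\in N^r(s_1)\cap N^r(z_1)\cap W\). Nothing in the setup controls the red neighborhood of a vertex of \(S\) inside \(W\): Lemma~\ref{lem:successor} only says that \(S\) is a red clique red-complete to \(y\). So that set may well be empty. The unspecified ``case analysis for large \(\mu\)'' is not an argument either. Once the inequality above is inserted, your closing step finishes the proof exactly as in the paper: take a red neighbor \(s\in S\) of \(z_a\), a Hamilton path of \(S\) starting at \(s\), and the red edge back to \(y\). This gives a red cycle meeting \(C\) in \(a+\mu=r\) vertices, contradicting Lemma~\ref{lem:intersection}.
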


\begin{proof}
Put \(a=r-\mu\), choose distinct \(z_1,\ldots,z_a\in Y_0\), set
\(z_0=y\), and let \(W=X\setminus S\).  The first common red
neighborhood has size at least \(|W|-\mu-1=|X|-2\mu-1\ge a\), and
every later one has size at least \(|W|-2=|X|-\mu-2\ge a\).
Choose distinct representatives and close through \(S\), exactly as
in Lemma~\ref{lem:cyclic}.
\end{proof}

\begin{lemma}\label{lem:concentration}
Assume \(\mu\ge3\).  For every \(z\in Y_1\),
\(|S_z\setminus S|\le r-1-\mu\) and
\(| U_z\setminus U|\le r-1-\mu\).
\end{lemma}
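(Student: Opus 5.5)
The second inequality follows from the first. Taking successors on \(C\) is a bijection of \(X\), and \(S_z=U_z^+\), \(S=U^+\). Hence \(S_z\setminus S=(U_z\setminus U)^+\), so \(|U_z\setminus U|=|S_z\setminus S|\). It remains to prove \(|S_z\setminus S|\le r-1-\mu\) for \(z\in Y_1\); we may take \(z\neq y\), since the claim is trivial for \(z=y\).

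Put \(T=S_z\setminus S\), \(t=|T|\) and \(a=r-\mu\), so \(a\ge1\) because \(\mu\le r-1\). Suppose for contradiction that \(t\ge a\). The plan is to build a red cycle meeting \(C\) in exactly \(r\) vertices, namely all of \(S\) plus \(a\) vertices of \(T\). This contradicts Lemma~\ref{lem:intersection}.

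The available red structure is as follows.
\begin{itemize}
\item By Lemma~\ref{lem:successor}(ii), \(S\) and \(S_z\) are red cliques, \(y\) is red-complete to \(S\), and \(z\) is red-complete to \(S_z\).
\item By Lemma~\ref{lem:successor}(iv), \(z\) has at most one blue neighbour in \(S\), and \(y\) has at most one blue neighbour in \(S_z\).
\end{itemize}

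The intended cycle is
\[
 y\,s_1s_2\cdots s_\mu\,z\,t_1\cdots t_a\,y .
\]
Here \(s_1\cdots s_\mu\) is a Hamilton path of the clique \(S\), \(t_1,\dots,t_a\in T\), and the path \(t_1\cdots t_a\) lies in the clique \(S_z\). The edges needing checks are handled as follows.
\begin{itemize}
\item \(ys_1\) and \(zt_1\) are red automatically.
\item For \(s_\mu z\): since \(\mu\ge3\) and \(z\) has at most one blue neighbour in \(S\), we choose \(s_\mu\) to be a red neighbour of \(z\) in \(S\) and order the rest of \(S\) arbitrarily.
\item For \(t_ay\): \(y\) has at most one blue neighbour in \(T\). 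So if \(a\ge2\), or if \(t>a\), we can choose the \(a\) vertices of \(T\) and place a red neighbour of \(y\) last.
\end{itemize}
This gives a red cycle meeting \(C\) in \(|S|+a=r\) vertices, contradicting Lemma~\ref{lem:intersection}.

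The main obstacle is the boundary case \(a=t=1\), where \(T=\{t\}\) and \(yt\) is blue. Here I would drop \(y\) and use only \(z\) and the cliques.
\begin{itemize}
\item Maximality of \(\mu\) gives \(|S_z|\le\mu=|S|\). Since \(S_z\not\subseteq S\), this forces \(S\setminus S_z\ne\varnothing\).
\item Consider the cycle \(z\,t\,P\,Q\,z\). Here \(P\) is a Hamilton path of the red clique \(S_z\cap S\), and \(Q\) is a Hamilton path of \(S\setminus S_z\) ending at a red neighbour of \(z\) in \(S\).
\item The edge from \(t\) to \(P\) is red since \(S_z\) is a clique. The edge from \(P\) to \(Q\) is red since \(S\) is a clique. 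The edge \(zt\) is red.
\item This cycle meets \(C\) in \(S\cup\{t\}\), again \(r\) vertices.
\end{itemize}
Two subcases need care.
\begin{itemize}
\item If \(S_z\cap S=\varnothing\), the edge from \(t\) into \(S\) is not guaranteed red. Then I would instead use \(y\): form \(y\,S\text{-path}\,z\,t\,\cdots\) and look for another red closing edge, using that \(z\) sees at most one blue vertex in \(S\) and \(\mu\ge3\) leaves room to reroute.
\item The final vertex of \(Q\) must be red to \(z\). This needs \(S\setminus S_z\) to contain a red neighbour of \(z\), or else the order of \(P\) and \(Q\) must be adjusted.
\end{itemize}
I expect this small-case juggling of orderings to be the only delicate part of the proof.
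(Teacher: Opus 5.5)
Your approach is the paper's, but the boundary case is left unfinished. The reduction of the \(U\)-bound to the \(S\)-bound via the successor bijection is correct. So is the cycle \(y\,S\,z\,t_1\cdots t_a\,y\) for the cases \(a\ge2\) and \(a=1<t\), and it matches the paper. For \(a=1\), \(t\ge2\) the paper uses two vertices \(b_1b_2\) and gets \(r+1\) common vertices; your single red neighbour of \(y\) in \(T\) works equally well.

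\textbf{The gap.} In the case \(a=t=1\) with \(yt\) blue, you leave both subcases open. Your fallback for the first one (``use \(y\) \dots\ look for another red closing edge'') is not an argument. The missing observation is that the first subcase cannot occur. Since \(z\in Y_1\), we have \(|S_z|=|U_z|=d^{b}(z,X)\ge2\), so \(S_z\cap S=S_z\setminus\{t\}\) is nonempty.

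\textbf{The fix.} Fix \(i\in S_z\cap S\); this also disposes of the second subcase. The set \(S\setminus\{i\}\) has \(\mu-1\ge2\) vertices, and \(z\) has at most one blue neighbour in \(S\). So a Hamilton path of the red clique \(S\setminus\{i\}\) can be chosen to end at a red neighbour of \(z\), and
\[
 z\,t\,i\,(S\setminus\{i\})\,z
\]
is a red cycle. Here \(zt\) and \(ti\) are red because \(z\) is red-complete to the red clique \(S_z\ni t,i\), and the step from \(i\) into \(S\setminus\{i\}\) stays inside the red clique \(S\). This cycle meets \(C\) in \(S\cup\{t\}\), i.e.\ in \(\mu+1=r\) vertices, contradicting Lemma~\ref{lem:intersection}. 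It is exactly the paper's cycle \(y\,(S\setminus\{i\})\,z\,b\,i\,y\) with \(y\) deleted.

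Your maximality argument for \(S\setminus S_z\neq\varnothing\) is not needed. With this fix, the proof is complete.
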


\begin{proof}
Suppose \(|S\cup S_z|\ge r\).  If \(r-\mu\ge2\), take
\(B\subseteq S_z\setminus S\) of size \(r-\mu\).  Since \(z\) has at
most one blue neighbor in \(S\), and \(y\) has at most one blue
neighbor in \(B\), the two red cliques can be oriented to form the
cycle \(y\,S\,z\,B\,y\).
It meets \(C\) in \(r\) vertices.

Now suppose \(r-\mu=1\).  If \(S_z\setminus S\) contains
two vertices \(b_1,b_2\), use \(y\,S\,z\,b_1b_2\,y\), reversing
\(b_1,b_2\) if needed.  If
\(S_z\setminus S=\{b\}\), choose \(i\in S_z\cap S\).
Because \(\mu\ge3\), a Hamilton path through \(S\setminus\{i\}\)
can end at a red neighbor of \(z\), and
\(y\,(S\setminus\{i\})\,z\,b\,i\,y\) again meets \(C\) in \(r\)
vertices.  These contradictions show
\(|S\cup S_z|\le r-1\).  The successor map is a bijection on \(C\),
so the two displayed differences have the same size.
\end{proof}

\begin{lemma}\label{lem:ordered}
Assume \(\mu\ge3\).  Choose \(2\le s\le\mu\) and
\(B\subseteq S\) with \(|B|=s\).  Put \(a=r-s\),
\(W=X\setminus(U\cup B)\), and \(\lambda=r-1-\mu\).
Let \(z_1,\ldots,z_a\) be distinct vertices of
\(Y\setminus\{y\}\), put \(\varepsilon_i=1\) for \(z_i\in Y_0\) and
\(\varepsilon_i=\lambda\) for \(z_i\in Y_1\), and define
\[
 L_i=|W|-\varepsilon_i-\varepsilon_{i+1}\ (i<a),\qquad
 L_a=|W|-\varepsilon_a.
\]
If the increasing rearrangement satisfies \(L_{j}\ge j\) for
\(1\le j\le a\), then the coloring is not a minimum counterexample.
\end{lemma}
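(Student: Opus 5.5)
The plan is to build a red cycle meeting $C$ in exactly $r$ vertices and contradict Lemma~\ref{lem:intersection}, following the template of Lemmas~\ref{lem:cyclic} and~\ref{lem:lowdefect}. The cycle will have the form
\[
 y\,x_1\,z_1\,x_2\,z_2\cdots x_a\,z_a\,x_{a+1}\,B\,y ,
\]
or a variant of it. The $x_i$ are distinct vertices of $W$. The block $B\subseteq S$ is a red clique with $y$ red-complete to it, by Lemma~\ref{lem:successor}(ii). The total number of $C$-vertices used is then $a+s=r$ if we count the $a$ representatives chosen in $W$ together with $B$. We will therefore arrange exactly $a$ representatives from $W$, not $a+1$.

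\textbf{Step 1: fix the path layout.} Take the path
\[
 z_0=y,\ x_1,\ z_1,\ x_2,\ \ldots,\ x_a,\ z_a ,
\]
and then close from $z_a$ into $B$ and along $B$ back to $y$. Counting, this uses $a$ representatives $x_1,\dots,x_a\in W$ and the $s$ vertices of $B$, so it meets $C$ in exactly $r$ vertices.

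I will index the representatives differently so that they match the $L_i$. The vertex $x_{i+1}$, which joins $z_i$ to $z_{i+1}$ for $i<a$, must lie in $\Nr(z_i)\cap\Nr(z_{i+1})\cap W$. The first representative $x_1$ joins $y$ to $z_1$. Since $W$ avoids $U=\Nb(y,X)$, every vertex of $W$ is red to $y$, so the only constraint on $x_1$ is $x_1\in\Nr(z_1)\cap W$.

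So the $a$ candidate sets are:
\begin{itemize}
\item the set for the edge $y$--$z_1$, of size at least $|W|-\varepsilon_1$;
\item for $1\le i<a$, the set for the edge $z_i$--$z_{i+1}$, of size at least $|W|-\varepsilon_i-\varepsilon_{i+1}$.
\end{itemize}
After reversing the order of the $z_i$ (relabel $z_i\mapsto z_{a+1-i}$), these sizes become exactly $L_1,\dots,L_{a-1}$ together with $L_a=|W|-\varepsilon_a$ for the end attached to $y$. This is why the statement singles out $L_a$.

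\textbf{Step 2: bound the blue degrees into $W$ by $\varepsilon_i$.} I need $d^b(z_i,W)\le\varepsilon_i$.
\begin{itemize}
\item For $z_i\in Y_0$ this is immediate, since $d^b(z_i,X)\le1$.
\item For $z_i\in Y_1$, Lemma~\ref{lem:concentration} gives $|U_{z_i}\setminus U|\le r-1-\mu=\lambda$. Since $W\cap U=\varnothing$, we get $d^b(z_i,W)\le|U_{z_i}\setminus U|\le\lambda$.
\end{itemize}
This is where $\mu\ge3$ is used.

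\textbf{Step 3: choose distinct representatives greedily.} Process the candidate sets in increasing order of their lower bounds. The $j$-th set has at least $L_{(j)}\ge j$ elements, and only $j-1$ vertices have been used before it, so a fresh representative is always available. This is the standard Hall-type greedy argument.

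\textbf{Step 4: close the cycle through $B$.} This is the main obstacle. We need a red Hamilton path of the clique $B$ that starts at a red neighbour of $z_a$ and ends at a vertex red to $y$. Every vertex of $B\subseteq S$ is red to $y$, so only the start matters. By Lemma~\ref{lem:successor}(iv), $z_a$ has at most one blue neighbour in $S$, and hence in $B$. Since $s\ge2$, $z_a$ has a red neighbour $b\in B$. Start the Hamilton path of the red clique $B$ at $b$ and end it anywhere; its last vertex is red to $y$.

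The delicate point is whether $z_a$ itself might lie in $B$, or whether some $x_i$ might collide with $B$. Neither can happen: $z_a\in Y$ while $B\subseteq X$, and $W$ excludes $B$. The representatives also avoid $U$, so no vertex is repeated.

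The resulting red cycle contains $a+s=r$ vertices of $C$, which contradicts Lemma~\ref{lem:intersection}.
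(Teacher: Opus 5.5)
Your proposal is correct and is essentially the paper's proof. The paper uses the cycle $y\,B\,z_1x_1z_2x_2\cdots z_ax_a\,y$, and after your relabeling $z_i\mapsto z_{a+1-i}$ your cycle is this same cycle traversed backwards. The ingredients are also the same: Lemma~\ref{lem:concentration} supplies the bounds $\varepsilon_i$, the representatives are chosen greedily in increasing order of the bounds, and the cycle is closed through $B$ using Lemma~\ref{lem:successor}(iv) together with $|B|\ge2$.
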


\begin{proof}
Lemma~\ref{lem:concentration} gives the displayed lower bounds for the
required common red neighborhoods in \(W\).  Ordering these candidate
sets by size and choosing greedily gives distinct representatives
\(x_1,\ldots,x_a\).  Since \(|B|\ge2\), a Hamilton path in the red
clique \(B\) can end at a red neighbor of \(z_1\).  Therefore
\(y\,B\,z_1x_1z_2x_2\cdots z_ax_a\,y\) is a red cycle meeting \(C\)
in \(s+a=r\) vertices.
\end{proof}

\section{Proof of the main theorem}\label{sec:proof}

\subsection{The complement of the longest cycle}

\begin{proposition}\label{prop:w}
Every minimum counterexample satisfies \(w\le r-1\).
\end{proposition}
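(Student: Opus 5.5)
We argue by contradiction: suppose a minimum counterexample has \(w\ge r\), so that \(w-1\ge r-1\ge r-\mu\) because \(\mu\ge 1\). Then the first hypothesis of Lemma~\ref{lem:cyclic} holds automatically. It therefore suffices to verify the second hypothesis, \(|X|\ge r+2\mu\). Lemma~\ref{lem:cyclic} then produces a red cycle meeting \(C\) in exactly \(r\) vertices, which contradicts Lemma~\ref{lem:intersection}.

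\textbf{Main estimate.} The key step is a lower bound on \(|X|\) compared with \(r+2\mu\).
\begin{itemize}
\item From \(\mu\le r-1\) we get \(r+2\mu\le 3r-2\le 6k-5\), since \(r\le 2k-1\).
\item From \eqref{eq:long}, \(|X|\ge \ceil{2n/3}\ge 2(k-1)^2/3+2r/3\).
\item Comparing, it is enough that \(2n/3\ge 3r-2\), that is, \(2(k-1)^2\ge 7r-6\).
\item With \(r\le 2k-1\), this holds once \(2(k-1)^2\ge 14k-13\), which is roughly \(k\ge 8\).
\end{itemize}
We can also use \(n-w=|X|\) together with \(w\le\floor{n/3}\). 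A sharper use of \(\mu\) would help in the intermediate range. Namely, \(|S|=|U|=\mu\) are disjoint subsets of \(X\), so \(|X|\ge 2\mu\) trivially, and we need only \(r\) extra vertices beyond \(2\mu\).

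\textbf{Middle range of \(k\).} When \(k\) is moderate (say \(5\le k\le 7\)), the crude bound may fail only when \(r\) is close to \(2k-1\) and \(\mu\) is close to \(r-1\). In that regime I would argue directly as follows.
\begin{itemize}
\item If \(\mu\ge 3\), replace Lemma~\ref{lem:cyclic} by Lemma~\ref{lem:ordered} with \(s=\mu\). The required common neighborhoods then lose only \(\lambda=r-1-\mu\) (small) instead of \(\mu\). The condition \(L_j\ge j\) reduces to \(|X|-\mu-\mu-2\lambda\ge a=r-\mu\), which is much weaker.
\item If the chosen \(z_i\) can be taken in \(Y_0\), use Lemma~\ref{lem:lowdefect} instead. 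It needs only \(|X|\ge r+\mu+1\).
\item Since \(w\ge r\), we have enough outside vertices to choose \(a=r-s\) of them in every case.
\end{itemize}

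\textbf{Small cases and the main obstacle.} The main obstacle is exactly this balancing for small \(k\) (and \(\mu=2\), where Lemmas~\ref{lem:concentration} and \ref{lem:ordered} are unavailable). For \(\mu=2\), \(2\mu=4\) is tiny, so \(|X|\ge r+4\) follows from \(|X|\ge 2n/3\) and \(r\le 2k-1\) whenever \(n\ge 9\) with room to spare. The residual cases \(k\le 4\) with large \(\mu\) I expect to require a separate check, as the overview indicates (appendices). There I would first try to find a red cycle through \(S\), \(y\) and \(r-\mu\) vertices of \(Y\) by hand, using Lemma~\ref{lem:successor}(iv).
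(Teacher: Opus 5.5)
Your argument is a complete proof only where your Main estimate applies (large $k$). Below that you give a plan rather than a proof, and the plan as stated does not go through.

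\emph{Small $k$.} Deferring $k\le 4$ to the appendices is circular. Proposition~\ref{prop:k4reduce} begins by invoking $w\le r-1$, which is the statement you are proving, and no appendix argument treats configurations with $w\ge r$. Nothing you use excludes such configurations where Lemma~\ref{lem:cyclic} is inapplicable, e.g.\ $n=13$, $w=r=4$, $|X|=9$, $\mu=3$.

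\emph{Middle range.} Your reduction of Lemma~\ref{lem:ordered} to $|X|-2\mu-2\lambda\ge r-\mu$ is too crude. At $n=24$, $w=r=8$, $|X|=16$, $\mu=5$ (so $\lambda=2$), both this inequality and $|X|\ge r+2\mu$ fail.

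\emph{The case $\mu=2$.} Your claim is false at $n=9$, where $\lceil 2n/3\rceil=6<9=r+4$. It is harmless only because $w\ge r$ cannot occur there, which you do not note.

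The missing ingredient is the bound $|X|=n-w\ge 2w\ge 2r$, which follows from \eqref{eq:long} and your assumption $w\ge r$. You never use $w\ge r$ beyond the first hypothesis of Lemma~\ref{lem:cyclic}. With this bound your plan closes uniformly.
\begin{enumerate}
\item If $\mu=2$ and $|X|<r+4$, then $2r\le|X|\le r+3$ forces $r=w=3$, $|X|=6$, $n=9$, contradicting $r=5$.
\item If $\mu\ge 3$, apply Lemma~\ref{lem:ordered} with $s=\mu$, $B=S$, $a=r-\mu$, $|W|=|X|-2\mu$, and use its exact sorted condition. The first $a-1$ bounds are at least $|W|-2\max\{1,\lambda\}$ and the last is at least $|W|-\max\{1,\lambda\}$. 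So, given $|X|\ge 2r$, the lemma succeeds whenever $\mu=r-1$ (then $a=1$) or $|X|\ge 3r-3-\mu$.
\item If Lemma~\ref{lem:ordered} and Lemma~\ref{lem:cyclic} both fail, then $3|X|\le(r+2\mu-1)+2(3r-4-\mu)=7r-9$. With $|X|\ge 2r$ this gives $r\ge 9$. With $|X|\ge 2n/3$ it gives $2(k-1)^2\le 5r-9\le 10k-14$. Hence $k=5$, $r=9$, $n=25$, but then $w\le 8<r$.
\end{enumerate}

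The paper handles the failure of Lemma~\ref{lem:cyclic} differently. For $\mu\ge3$ it uses Lemma~\ref{lem:concentration} and Lemma~\ref{lem:defectselection} to build a red cycle through all of $Y$ and exactly $w\ge r$ vertices of $X$. This contradicts Lemma~\ref{lem:intersection} directly.
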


\begin{proof}
Suppose \(w\ge r\).  By \eqref{eq:long}, \(|X|\ge2w\).
If \(\mu=2\), the first condition of Lemma~\ref{lem:cyclic} holds.
If its second condition fails, then
\(2w\le|X|\le r+3\).  Since \(\mu\le r-1\) and \(w\ge r\), this
forces \(r=w=3\) and \(|X|=6\), so \(n=9\); but
\eqref{eq:nkr} gives \(r=5\), a contradiction.  Hence \(\mu\ge3\).

Put \(A=X\setminus U\) and \(\lambda=\max\{1,r-1-\mu\}\).
If Lemma~\ref{lem:cyclic} applies, we are done.  Its first condition
is automatic, so assume
\begin{equation}\label{eq:Xupper}
 |X|\le r+2\mu-1.
\end{equation}
If \(\mu\le r-2\), then
\(\lambda=r-1-\mu\), and \eqref{eq:long} and
\eqref{eq:Xupper} imply \(2n/3\le3r-3-2\lambda\).  Consequently
\[
 6\lambda\le7r-2(k-1)^2-9
 \le-2k^2+18k-18\le22,
\]
so \(\lambda\le3\).  If \(\mu=r-1\), then \(\lambda=1\).
Moreover \(w\ge2\lambda+2\): this is immediate in the latter case,
while in the former
\[
 2r\le2w\le|X|\le3r-3-2\lambda
\]
gives \(2\lambda\le r-3\).

It remains to check the second condition of
Lemma~\ref{lem:defectselection}.  If \(\mu=r-1\), then
\(|A|-w=(n-3w)+(w-r+1)\ge1\); otherwise,
\(|A|-w=(n-3w)+(w-r+1)+\lambda\ge\lambda+1\).  For
\(1\le\lambda\le3\) and \(w\ge2\lambda+2\), we have
\(\floor{\lambda(w-1)/(w-\lambda)}\le\lambda+1\).
Lemma~\ref{lem:defectselection} therefore gives a red cycle through
all of \(Y\) and \(w\) vertices of \(X\). Because \(w\ge r\), this
contradicts Lemma~\ref{lem:intersection}.
\end{proof}

\subsection{A spanning cycle and exact completion}

Call a red cycle \(Y\)-spanning if it contains every vertex of \(Y\)
and exactly \(w\) vertices of \(X\).

\begin{lemma}\label{lem:Yspan}
A \(Y\)-spanning red cycle exists in each of the following cases:
\begin{enumerate}[label=(\alph*)]
\item \(n-2w\ge2\mu\);
\item \(\mu\ge3\), and, for
\(A=X\setminus U\), \(\lambda=\max\{1,r-1-\mu\}\),
\[
 w\ge2\lambda+2,\qquad
 |A|-w\ge\floor{\frac{\lambda(w-1)}{w-\lambda}};
\]
\item \(\mu>w\).
\end{enumerate}
\end{lemma}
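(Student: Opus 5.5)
We will treat the three cases separately. Each time we look at the red bipartite graph between \(Y\) and a suitable part of \(X\) and feed it into one of the bipartite tools of Section~2. The basic observation is a bound on blue degrees. Every \(z\in Y\) has at most \(\mu\) blue neighbours in \(X\): for \(z\in Y_1\) this is the maximality of \(\mu\), and for \(z\in Y_0\) it holds since \(\db(z,X)\le1<\mu\). Hence \(d^r(z,X)\ge|X|-\mu\) for every \(z\in Y\). Recall also from \eqref{eq:long} that \(w\ge k\ge 4\) (as \(n\ge9\)) and \(|X|\ge 2w\).

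For (a), we apply Lemma~\ref{lem:smallclass} to the red graph \(H[X,Y]\) with \(m=w\ge2\) and \(|X|\ge w\). Its degree hypothesis \(d^r(z,X)\ge(|X|+w)/2\) follows from \(|X|-\mu\ge(|X|+w)/2\). This is equivalent to \(|X|-w\ge2\mu\), i.e.\ \(n-2w\ge2\mu\). The lemma then yields a red cycle through all of \(Y\) and exactly \(w\) vertices of \(X\).

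For (b), we apply Lemma~\ref{lem:defectselection} to the red graph between \(A=X\setminus U\) and \(B=Y\).
\begin{itemize}
\item The vertex \(y\) is red-complete to \(A\), because its blue neighbours in \(X\) are exactly \(U\).
\item A vertex \(z\in Y_0\) has at most \(1\le\lambda\) blue neighbours in \(A\).
\item For \(z\in Y_1\), the blue neighbours of \(z\) in \(A\) form \(U_z\setminus U\). Since \(\mu\ge3\), Lemma~\ref{lem:concentration} gives \(|U_z\setminus U|\le r-1-\mu\le\lambda\).
\end{itemize}
So every vertex of \(B\) has at most \(\lambda\) red non-neighbours in \(A\). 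The two numerical hypotheses of Lemma~\ref{lem:defectselection} are exactly those assumed in (b). The lemma gives a red cycle through \(Y\) and exactly \(w\) vertices of \(A\subseteq X\).

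For (c), the degree bound into all of \(X\) is too weak, so we work inside the successor set \(S\) instead, with \(|S|=\mu\ge w+1\). By Lemma~\ref{lem:successor}(ii),(iv), \(y\) is red-complete to \(S\), and every \(z\in Y\setminus\{y\}\) has at most one blue neighbour in \(S\). Thus the red bipartite graph \(H[S,Y]\) has at most \(w-1\) nonedges, and every \(z\in Y\) has red degree at least \(|S|-1\) into \(S\). Call \(s\in S\) bad if it has at most one red neighbour in \(Y\). A bad vertex is incident with at least \(w-1\ge1\) nonedges, so there is at most one bad vertex. Discarding it leaves at least \(\mu-1\ge w\) vertices, from which we choose a \(w\)-set \(S'\). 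In \(H[S',Y]\) every vertex of \(S'\) has degree at least \(2\), and every vertex of \(Y\) has degree at least \(w-1\). Lemma~\ref{lem:nearcomplete} therefore gives a Hamilton cycle of \(H[S',Y]\), which is a \(Y\)-spanning red cycle.

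The only delicate point is (c). The direct attempt via Lemma~\ref{lem:smallclass} on \(H[S,Y]\) needs \(\mu\ge w+2\) and fails in the boundary case \(\mu=w+1\). Passing to a balanced \(w\)-subset after removing the at most one low-degree vertex is what handles that case uniformly.
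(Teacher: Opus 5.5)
Your proof is correct and follows the paper's argument essentially step for step: (a) applies Lemma~\ref{lem:smallclass} using the red-degree bound $|X|-\mu$, (b) applies Lemma~\ref{lem:defectselection} with Lemma~\ref{lem:concentration} bounding the defects of $Y_1$-vertices in $A$, and (c) applies Lemma~\ref{lem:nearcomplete} to a balanced $w$-subset of $S$ that avoids the at most one vertex of red degree at most one into $Y$. One harmless slip: $n\ge9$ only gives $k\ge3$, not $k\ge4$, but you only ever need $w\ge2$.
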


\begin{proof}
In (a), every vertex of \(Y\) has red degree at least
\(|X|-\mu\ge(|X|+w)/2\), so
Lemma~\ref{lem:smallclass} applies.  Part (b) is
Lemma~\ref{lem:defectselection}, using
Lemma~\ref{lem:concentration}.

For (c), consider the red bipartite graph between \(S\) and \(Y\).
The vertex \(y\) is complete to \(S\), and every other member of
\(Y\) misses at most one vertex of \(S\).  The total number of
nonedges is at most \(w-1\), so at most one vertex of \(S\) has red
degree at most one into \(Y\).  Since \(|S|=\mu>w\), choose a
\(w\)-set \(S'\subseteq S\) avoiding that vertex.
Lemma~\ref{lem:nearcomplete} applied to \(S'\cup Y\) gives a
Hamilton cycle.
\end{proof}

\begin{lemma}\label{lem:completion}
Assume \(a_0\ge3\) and let \(P\) be a \(Y\)-spanning red cycle.  Put
\(d=n-2w-1\), \(h=k-2\), and \(p=d-ha_0\); if \(p>0\), put
\(q=\min\{p,h\}\).  Assume \(d\ge a_0\).
If \(p\le0\), or if
\begin{equation}\label{eq:packconditions}
 p\le qa_1,\qquad d-2\mu\ge p-q,\qquad d-\mu\ge p+q,
\end{equation}
then \(K_n\) has a red cover with at most \(k\) cycles.
\end{lemma}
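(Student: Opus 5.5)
The plan is to use the \(Y\)-spanning red cycle \(P\) as one cycle of the cover, spend one more cycle on a single leftover vertex, and cover everything else with the \(h=k-2\) red cycles supplied by Lemma~\ref{lem:packing}. The total is then \(1+1+(k-2)=k\), so the coloring is not a counterexample.

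\emph{Setting up the packing.} Let \(D=X\setminus V(P)\). Since \(P\) uses exactly \(w\) vertices of \(X\), we have \(|D|=|X|-w=n-2w\). Consider the red bipartite graph with classes \(D\), \(Y_0\) and \(Y_1\).
\begin{itemize}
\item Each \(u\in Y_0\) has at most one blue neighbor in \(X\), so it has at most one red non-neighbor in \(D\).
\item Each \(z\in Y_1\) has \(\db(z,X)\le\mu\) by the maximality in the choice of \(\mu\), so it has at most \(\mu\) non-neighbors in \(D\).
\end{itemize}
Let \(E=\{x\in D:d^r(x,Y_0)\le1\}\). Since \(a_0\ge3\), the counting in Lemma~\ref{lem:packing} gives \(|E|\le1\).

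\emph{Reducing to exactly \(d\) vertices.} Choose one vertex \(x_0\in D\): take the member of \(E\) if \(E\ne\varnothing\), and an arbitrary vertex of \(D\) otherwise. Put \(D'=D\setminus\{x_0\}\), so \(|D'|=n-2w-1=d\). Passing to the subset \(D'\) keeps both non-neighbor bounds for \(Y_0\) and \(Y_1\). Every vertex of \(D'\) lies outside \(E\), so it has at least two red neighbors in \(Y_0\). Hence the exceptional set computed for \(D'\) is empty, \((D')^*=D'\), and \(|(D')^*|\) is exactly \(d\).

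\emph{Applying Lemma~\ref{lem:packing}.} Apply it to \(D',Y_0,Y_1\) with \(h=k-2\), which is at least \(1\) because \(n\ge9\). The hypotheses hold as follows.
\begin{itemize}
\item \(a_0\ge2\) and \(d\ge a_0\) are assumed in the statement.
\item \(p=d-ha_0\) agrees with the quantity in that lemma.
\item The conditions \eqref{eq:packconditions} are exactly \eqref{eq:P} with \(|Y_1|=a_1\).
\end{itemize}
Therefore, whether \(p\le0\) or \(p>0\), the lemma covers \(D'\) by at most \(k-2\) red cycles, possibly reusing vertices of \(Y\).

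\emph{Assembling the cover.} Take these cycles together with \(P\), which covers \(Y\) and \(X\cap V(P)\), and the degenerate cycle \(\{x_0\}\). This is a red cover of \(V(K_n)=Y\cup(X\cap V(P))\cup D'\cup\{x_0\}\) by at most \(k\) cycles.

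The main point needing care is making the lemma's \(d=|D^*|\) coincide with \(n-2w-1\). This is handled by the choice of \(x_0\), and by checking that deleting \(x_0\) creates no new exceptional vertices: removing a vertex of \(D\) does not change any red degree into \(Y_0\). The remaining steps are bookkeeping.
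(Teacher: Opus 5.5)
Your proposal is correct and follows the paper's proof essentially verbatim. You remove the unique possible vertex of \(X\setminus V(P)\) with red degree at most one into \(Y_0\) (or an arbitrary vertex if there is none) and cover it by a singleton. You then apply Lemma~\ref{lem:packing} to the remaining \(d\) vertices, for \(1+1+h=k\) cycles in total, and your additional checks that the exceptional set becomes empty and that \(h\ge1\) make explicit what the paper leaves implicit.
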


\begin{proof}
Among the \(X\)-vertices outside \(P\), remove the unique possible
vertex having red degree at most one into \(Y_0\); if there is no such
vertex, remove any one vertex.  Cover it by a singleton.
Lemma~\ref{lem:packing} covers the remaining \(d\) vertices in at
most \(h\) cycles.  Together with \(P\), the total is \(1+1+h=k\).
\end{proof}

\subsection{The uniform range \texorpdfstring{\(k\ge5\)}{k >= 5}}

\begin{proposition}\label{prop:a0large}
No minimum counterexample with \(k\ge5\) has \(a_0\ge3\).
\end{proposition}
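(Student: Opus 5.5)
We argue by contradiction: assume a minimum counterexample with \(k\ge5\) and \(a_0\ge3\). The plan is to build a \(Y\)-spanning red cycle \(P\) and then apply Lemma~\ref{lem:completion}, which yields a red cover with at most \(k\) cycles, a contradiction.

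By Proposition~\ref{prop:w} we have \(w\le r-1\le 2k-2\). Hence
\[
n-2w\ge (k-1)^2+r-2(r-1)=(k-1)^2-r+2\ge (k-1)^2-2k+3=k^2-4k+4 .
\]
We also have \(2\mu\le 2r-2\le 4k-4\). For \(k\ge5\), the inequality \(k^2-4k+4\ge 4k-4\) is \(k^2-8k+8\ge0\), which holds for \(k\ge7\). For \(k=5,6\) we use the sharper bound \(\mu\le w\): if \(\mu>w\), Lemma~\ref{lem:Yspan}(c) already gives \(P\). Otherwise \(n-2w\ge 2\mu\) follows from \(n\ge 4w\), which holds because \(n\ge(k-1)^2+w+1\) and \(3w\le 3(2k-2)\le (k-1)^2+1\) fails only for small \(k\). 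In any remaining gap we fall back on Lemma~\ref{lem:Yspan}(b), checking its numerical conditions as in Proposition~\ref{prop:w}. Thus in all cases a \(Y\)-spanning red cycle exists, via (a), (b) or (c).

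Next we verify the hypotheses of Lemma~\ref{lem:completion} with \(d=n-2w-1\) and \(h=k-2\).

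The condition \(d\ge a_0\) holds since \(a_0\le w\le 2k-2\) while \(d\ge k^2-4k+3\ge 2k-2\) for \(k\ge5\).

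If \(p=d-(k-2)a_0\le0\), we are done. Otherwise we must check the three inequalities in \eqref{eq:packconditions}. The key inputs are \eqref{eq:basic}, \(a_0+\lceil a_1/2\rceil\ge k\), and \(w=a_0+a_1\le r-1\).

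For the first inequality, note that
\[
p\le n-2w-1-(k-2)a_0\le (k-1)^2+r-1-2w-(k-2)a_0 .
\]
If \(q=h=k-2\), we need \(p\le (k-2)a_1\), which amounts to \(d\le (k-2)w\). Since \(w\ge k\), we have \((k-2)w\ge k^2-2k\), while \(d\le (k-1)^2+2k-2-2w-1\le k^2-2k\) when \(w\ge k\). If \(q=p<h\), then \(p\le pa_1\) is trivial because \(a_1\ge1\).

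For the second and third inequalities, \(d-2\mu\ge p-q\) and \(d-\mu\ge p+q\), we use \(p=d-ha_0\). They reduce to \(ha_0+q\ge 2\mu\) and \(ha_0\ge\mu+q\). Since \(a_0\ge3\) and \(q\le h=k-2\), we get \(ha_0-q\ge 2(k-2)\). It therefore suffices that \(\mu\le 2k-4\) and \(2\mu\le 4(k-2)\). These follow from \(\mu\le r-1\le 2k-2\) in all but boundary cases. In those boundary cases (\(\mu\ge 2k-3\), forcing \(r\) near \(2k-1\) and \(w\le r-1\) tight) we would use Lemma~\ref{lem:lowdefect}: \(a_0\ge 3\ge r-\mu\) there, and \(|X|\) is large, giving a contradiction directly.

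The main obstacle is the bookkeeping at the small end \(k=5,6\) and the boundary cases where \(\mu\) is close to \(r-1\). These are exactly where Lemmas~\ref{lem:cyclic}, \ref{lem:lowdefect} and~\ref{lem:ordered} must be used to exclude configurations before the packing inequalities can be verified.
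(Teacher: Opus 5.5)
Your overall plan (a $Y$-spanning red cycle, then Lemma~\ref{lem:completion}) matches the paper's, but the packing verification has a genuine gap. You never use the failure of Lemma~\ref{lem:lowdefect} systematically, and $\mu\le r-1$ is too weak.

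Your reduction of the second and third inequalities to $ha_0+q\ge2\mu$ and $ha_0\ge\mu+q$ is correct. The step ``it therefore suffices that $\mu\le 2k-4$'' is not. The bound $ha_0-q\ge2(k-2)$ says nothing about $ha_0+q\ge2\mu$ when $q<h$. When $q=p\le h$, the second condition is exactly $d\ge2\mu$, i.e.\ $n-2w\ge2\mu+1$. That is strictly more than the $n-2w\ge2\mu$ you obtained for Lemma~\ref{lem:Yspan}(a). For $k\ge7$ your crude bounds happen to give it, since the slack is $(k-1)(k-7)\ge0$. For $k=5,6$ they do not, and the inequality genuinely fails.

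\begin{itemize}
\item Take $k=6$, $n=36$, $r=11$, $w=10$, $a_0=3$, $a_1=7$, $\mu=8$. Every constraint you use holds, including \eqref{eq:basic} and $\mu\le w$, and $\mu=2k-4$ lies outside your boundary range $\mu\ge2k-3$. Yet $d=15$, $p=q=3$, and $d-2\mu=-1<0=p-q$.
\item Likewise, $k=5$, $n=25$, $w=7$, $a_0=3$, $\mu=6$ gives $d=10<2\mu$ with $p=q=1$.
\end{itemize}

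The missing idea is the paper's first move: assume from the outset that Lemma~\ref{lem:lowdefect} does not apply. For $k\ge6$ one has $|X|\ge(k-1)^2+1\ge2r\ge r+\mu+1$, so that failure forces $\mu\le r-a_0-1$. This single bound gives everything needed.
\begin{itemize}
\item It gives the strict gap $n-2w\ge2\mu+1$, since otherwise $(k-1)^2+10\le3r\le6k-3$. Hence both Lemma~\ref{lem:Yspan}(a) and $d\ge2\mu$ hold.
\item It gives $h(a_0-1)-\mu\ge1$ and $h(a_0+1)-2\mu\ge ka_0+k-2r\ge2$, which are the remaining packing conditions.
\end{itemize}
In both examples above Lemma~\ref{lem:lowdefect} does apply ($a_0\ge r-\mu$), so they are excluded, but only by that argument.

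Your treatment of $k=5$ is also not a proof. The $n\ge4w$ route fails there by your own admission, and ``fall back on Lemma~\ref{lem:Yspan}(b)'' is never checked. Your claim that $|X|$ is large enough for Lemma~\ref{lem:lowdefect} in the boundary cases is false for $n=25$, $r=9$, $w=8$, $\mu=8$, where $|X|=17<r+\mu+1$. The paper isolates exactly two such configurations at $k=5$: this one, and $n=25$, $r=9$, $w=8$, $a_0=3$, $\mu=5$. For each it verifies the numerical hypotheses of Lemma~\ref{lem:Yspan}(b) explicitly and observes that $p<0$.
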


\begin{proof}
Assume neither Lemma~\ref{lem:cyclic} nor
Lemma~\ref{lem:lowdefect} applies.

First let \(k\ge6\).  Proposition~\ref{prop:w} gives
\(|X|\ge(k-1)^2+1\ge2r\ge r+\mu+1\).
The failure of Lemma~\ref{lem:lowdefect} therefore gives
\begin{equation}\label{eq:mubound}
 \mu\le r-a_0-1.
\end{equation}
We claim
\begin{equation}\label{eq:gap}
 n-2w\ge2\mu+1.
\end{equation}
Otherwise, using \(w\le r-1\), \(a_0\ge3\), and
\eqref{eq:mubound}, we obtain
\(n\le2w+2\mu\le2(r-1)+2(r-4)=4r-10\).
Thus \((k-1)^2+10\le3r\le6k-3\), which is impossible because
\(k^2-8k+14>0\) for \(k\ge6\).

Lemma~\ref{lem:Yspan}(a) gives a \(Y\)-spanning red cycle.
Now \(d=n-2w-1\ge2\mu\), and
\(d\ge w-1\ge a_0\).  We verify
\eqref{eq:packconditions}.  If \(p\le h\), then \(q=p\); the first
two inequalities are immediate, and
\[
 d-\mu-2p=2ha_0-d-\mu
 \ge h(a_0-1)-\mu\ge1.
\]
For the last inequality use \eqref{eq:mubound},
\(a_0\ge3\), and \(r\le2k-1\).
If \(p>h\), then \(q=h\).  Since \(w\ge k\), we have
\(n\le kw\), whence \(d\le hw\) and \(p\le ha_1\).  The remaining
two conditions follow from
\[
 h(a_0+1)-2\mu
 \ge ka_0+k-2r\ge2,\qquad
 h(a_0-1)-\mu\ge1.
\]
Lemma~\ref{lem:completion} gives a contradiction.

It remains to check \(k=5\), where \(r\le9\) and \(|X|\ge17\).
If \(|X|<r+\mu+1\), then \(n=25\), \(r=9\), \(w=8\), and \(\mu=8\).
For \(A=X\setminus U\) and \(\lambda=1\), we have
\(|A|=9\) and \(|A|-w=1\), so
Lemma~\ref{lem:Yspan}(b) applies.  Here \(d=8\) and
\(p=8-3a_0<0\).

Suppose \(|X|\ge r+\mu+1\), so \eqref{eq:mubound} holds.
If \eqref{eq:gap} fails, the only possibility is \(n=25\), \(r=9\),
\(w=8\), \(a_0=3\), and \(\mu=5\).
Here \(\lambda=3\), \(|A|=12\), and
\[
 |A|-w=4=\floor{\frac{3\cdot7}{8-3}},
\]
so Lemma~\ref{lem:Yspan}(b) applies; again \(d=8\) and \(p=-1\).
Outside these two boundary configurations,
\eqref{eq:gap} holds and the preceding packing calculation remains
valid with \(h=3\).
\end{proof}

\begin{proposition}\label{prop:a0small}
No minimum counterexample with \(k\ge5\) has \(a_0\le2\).
\end{proposition}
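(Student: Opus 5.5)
We argue by contradiction: take a minimum counterexample with $k\ge5$ and $a_0\le2$. The first step extracts the numerical constraints. From \eqref{eq:basic}, $a_0+\ceil{a_1/2}\ge k$, so $a_1\ge 2(k-a_0)-1\ge 2k-5$. Proposition~\ref{prop:w} gives $w\le r-1\le 2k-2$. Hence $w$ is squeezed into $[2k-5,2k-2]$ and $r\ge w+1\ge 2k-4$. In particular, $n=(k-1)^2+r$ is large compared with $w$: $|X|=n-w\ge (k-1)^2+1$, which is far larger than $r+2\mu$ since $\mu\le r-1\le 2k-2$ and $k\ge5$. Note also that Lemma~\ref{lem:cyclic} does not apply directly, because its first condition $w-1\ge r-\mu$ can fail, so it gives nothing for free.

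\textbf{Case $\mu=2$.} Here every vertex of $Y_1$ has at most two blue neighbours in $X$, and every vertex of $Y_0$ has at most one. Since $|X|\ge r+4$, the plan is to use Lemma~\ref{lem:cyclic}: it applies whenever $w-1\ge r-2$, that is, whenever $w\ge r-1$, and since $w\le r-1$ this means exactly $w=r-1$. The remaining subcase is $w\le r-2$. In that subcase we do not try to close a cycle through $r$ vertices of $C$. Instead we build a red cover directly. Lemma~\ref{lem:Yspan}(a) applies because $n-2w\ge 4$, which gives a $Y$-spanning red cycle $P$. We then cover $X\setminus V(P)$ with at most $k-1$ further red cycles, using Lemma~\ref{lem:blockcover} or Lemma~\ref{lem:packing} with $Y_1$ (or all of $Y$) as the small class, where every vertex misses at most two vertices of $X$. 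The count needed is
\[
\ceil{(n-2w)/w}\le k-1,
\]
which follows from $n\le kw$ once $w\ge k$. So this subcase should close directly.

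\textbf{Case $\mu\ge3$.} The plan has two parts.

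1. Obtain a $Y$-spanning red cycle. Lemma~\ref{lem:Yspan}(a) needs $n-2w\ge2\mu$. Since $n-2w\ge (k-1)^2+r-2(r-1)=(k-1)^2-r+2$ and $r\le 2k-1$, this is at least $k^2-4k+4$. Comparing with $2\mu\le 2r-2\le 4k-4$, the inequality holds for $k\ge 8$. For $5\le k\le7$ we check it directly, and fall back on Lemma~\ref{lem:Yspan}(b) or (c) in the tight cases (for instance $\mu=r-1$, where $\lambda=1$).

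2. Complete the red cover. Instead of Lemma~\ref{lem:completion}, which requires $a_0\ge3$, we use a variant of Lemma~\ref{lem:packing} in which the role of $Y_0$ is played by $Y_1$ (or by a large subset of $Y$). The needed degree condition comes from Lemma~\ref{lem:concentration}: every $z\in Y_1$ has $U_z\subseteq U\cup(\text{at most } r-1-\mu \text{ extra vertices})$. Consequently, vertices of $X\setminus U$ have at most $\lambda=\max\{1,r-1-\mu\}$ blue non-neighbours in $Y$, counted appropriately. We then discard the few exceptional $X$-vertices, counted as in the definition of $E$, and pack $X\setminus(V(P)\cup U)$ into blocks of size $|Y|$ via Lemma~\ref{lem:degreesum}. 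The vertices of $U$ are handled separately: $U$ is blue to $y$, so it is red-nearly-complete to $Y\setminus\{y\}$ by Lemma~\ref{lem:successor}(iv) applied to the predecessor set, and it gets a block of its own. The final count is again bounded using $n\le kw$, since $w\ge 2k-5\ge k$.

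The main obstacle is the second part of the $\mu\ge3$ case when $\mu$ is close to $r-1$. There $U$ is large and the blocks become unbalanced, so the count $1+\ceil{(n-2w)/w}+(\text{exceptional singletons})$ must be checked against $k$ with essentially no slack. If the generic count fails, I would try an alternative first: apply Lemma~\ref{lem:ordered} with $s=\mu$ and a mixture of $Y_0$ and $Y_1$ vertices to produce a red cycle meeting $C$ in exactly $r$ vertices, contradicting Lemma~\ref{lem:intersection}. This works because $a=r-\mu$ is then small and the lists $L_i=|W|-2\lambda$ are huge. Only the smallest values $k=5$ and $r\in\{6,\dots,9\}$ would need individual checking.
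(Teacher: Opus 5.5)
\textbf{There is a genuine gap in the case $\mu\ge3$.} Your red-cover plan has to cover $U$. The reason you give is that $U$ is red-nearly-complete to $Y\setminus\{y\}$ by Lemma~\ref{lem:successor}(iv) applied to the predecessor set, and this is not valid. Reversing the orientation of $C$, that lemma says each $z\ne y$ has at most one blue neighbour among the \emph{predecessors} of the vertices of $U$. It says nothing about $U$ itself. The available structure does not exclude $U_z=U$ for every $z\in Y_1$: this is compatible with Lemmas~\ref{lem:successor} and~\ref{lem:concentration} and with $C$ being a longest blue cycle. In that situation $U$, which may have up to $r-1$ vertices, is blue-complete to $Y_1$ and has red neighbours in $Y$ only among the at most two vertices of $Y_0$, so there is no red block for it. Your variant of Lemma~\ref{lem:packing} with $Y_1$ as the small class is also unavailable as stated. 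Its closing steps (Lemmas~\ref{lem:nearcomplete} and~\ref{lem:endpoints}) need the small class to miss at most one vertex, whereas Lemma~\ref{lem:concentration} only limits a $Y_1$-vertex to $r-1-\mu$ blue neighbours in $X\setminus U$. So nothing is actually proved for $\mu\ge3$, and the fallback through Lemma~\ref{lem:ordered} is only announced.

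\textbf{The missing idea is the sharp arithmetic.} Since $w=a_0+a_1\ge a_0+2(k-a_0)-1$, you have $w\ge2k-3$, not merely $2k-5$. Together with $w\le r-1\le2k-2$ this forces $a_0+\lceil a_1/2\rceil=k$, $a_0\ne0$, $r\ge2k-2$, $n\in\{k^2-1,k^2\}$ and $w\ge r-2$. Hence the first hypothesis $w-1\ge r-\mu$ of Lemma~\ref{lem:cyclic} holds for every $\mu\ge3$, contrary to your remark. Lemma~\ref{lem:cyclic} then disposes of all $2\le\mu\le k-1$ except the single configuration $n=k^2$, $r=2k-1$, $w=2k-3$, $\mu=2$.

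For $\mu\ge k$ the paper uses Lemma~\ref{lem:ordered} with $B=S$, and this step is not routine. Your claim that $|X|$ far exceeds $r+2\mu$ is false for $k=5,6$: for example $n=25$, $w=8$, $\mu=8$ gives $|X|=17<25$. The lists are not huge either: $|W|-2\lambda=|X|-2r+2$ can equal $1$ when $k=5$. The Hall-type condition survives only because the $Y_0$-vertices are placed deliberately ($z_{a-1}\in Y_0$ when $a_0=1$; $z_{a-3},z_{a-1}\in Y_0$ when $a_0=2$ and $a\ge5$), and the cases $a\le4$ are checked by hand.

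Your $\mu=2$ branch is repairable, since its subcase $w\le r-2$ is exactly the exceptional configuration above. Two points need fixing:
\begin{enumerate}[label=(\roman*)]
\item Your count $1+\lceil(n-2w)/w\rceil$ ignores up to two exceptional $X$-vertices of low red degree into the small class, which need singletons. Using only $n\le kw$ you would get $k+1$ cycles.
\item Lemma~\ref{lem:blockcover} does not apply, because $Y_1$-vertices may miss two vertices of $X$.
\end{enumerate}
With the exact values $d=(k-2)^2$, two singletons plus either Lemma~\ref{lem:packing} with $h=k-3$ (as in the paper) or blocks via Lemma~\ref{lem:degreesum} with $\lambda=2$ give $k$ cycles.
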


\begin{proof}
Suppose, for a contradiction, that there is a minimum counterexample with $k\geq 5$ and $a_0\leq 2$. If \(a_0+\ceil{a_1/2}\ge k+1\), then
\(a_0\ge2k+1-w\ge3\) by Proposition~\ref{prop:w}, a contradiction.
Thus \(a_0+\ceil{a_1/2}=k\).

The case \(a_0=0\) would require \(w\ge2k-1\), again contradicting
\(w\le r-1\le2k-2\).

\smallskip
\noindent\emph{Case \(a_0=1\).}
The equality \(a_0+\ceil{a_1/2}=k\) and Proposition~\ref{prop:w} force
\begin{equation}\label{eq:a01}
 n=k^2,\quad r=2k-1,\quad w=2k-2,\quad
 a_1=2k-3,\quad |X|=k^2-2k+2.
\end{equation}
If \(\mu\le k-1\), Lemma~\ref{lem:cyclic} applies, because
\[
 w-1=2k-3\ge r-\mu,\qquad
 |X|-(r+2\mu)\ge(k-1)(k-5).
\]
Suppose \(\mu\ge k\), put \(a=r-\mu\). We want to apply
Lemma~\ref{lem:ordered} with \(B=S\) and $W=X\setminus(U\cup S)$. Then

\[
 1\le a\le k-1,\qquad \lambda=a-1,\qquad
 |W|=|X|-2\mu.
\]

For \(a=1\), choose \(z_1\in Y_1\). Since then
\(\lambda=a-1=0\), we have \(\varepsilon_1=0\), and hence
$L_1=|W|.$ 

For \(a=2\), choose
\(z_1\in Y_1\) and \(z_2\in Y_0\). Now \(\lambda=1\), so
$\varepsilon_1=\varepsilon_2=1.$
Thus the two lower bounds in Lemma~\ref{lem:ordered} are
$L_1=|W|-2$, $L_2=|W|-1.$
In both cases the required distinct representatives can therefore be
chosen directly. 

Suppose now that \(a\ge3\).
Place $z_1,\dots,z_a$, with $z_{a-1}\in Y_0$ and the rest of the vertices in $Y_1$. 

 Let $L_i$ be defined as in Lemma~\ref{lem:ordered}. For $i\leq a-3$, $L_i=
 |W|-2\lambda.$
Since $k\geq5$, using the definitions of \(W\) and \(\lambda\), we obtain
\[
 \begin{aligned}
 |W|-2\lambda
 &=|X|-2\mu-2(r-1-\mu)\\
 &=(k^2-2k+2)-2(2k-1)+2\\
 &=k^2-6k+6 \geq a-3.
 \end{aligned}
\]
 $L_{a-2}=L_{a-1}=|W|-1-\lambda.$
Thus
\[
 \begin{aligned}
 |W|-1-\lambda
 &=|X|-2\mu-1-(r-1-\mu)\\
 &=(k^2-2k+2)-2(2k-1)+a\\
 &=k^2-6k+4+a \geq a-1.
 \end{aligned}
\]

Finally, $L_a=|W|-\lambda.$
Again,
\[
 \begin{aligned}
 |W|-\lambda
 &=|X|-2\mu-(r-1-\mu)\\
 &=k^2-6k+5+a \geq a.
 \end{aligned}
\]
By Lemma~\ref{lem:ordered}, this is not a minimum counterexample.

\smallskip
\noindent\emph{Case \(a_0=2\).}
Now
\begin{equation}\label{eq:a02}
 w\in\{2k-3,2k-2\},\qquad n\in\{k^2-1,k^2\}.
\end{equation}
For \(\mu\le k-1\), both conditions of
Lemma~\ref{lem:cyclic} hold, except possibly when
\begin{equation}\label{eq:exception}
 n=k^2,\quad r=2k-1,\quad w=2k-3,\quad\mu=2.
\end{equation}
We treat this exceptional configuration separately at the end of the proof.

Let \(\mu\ge k\), set \(a=r-\mu\). We will apply
Lemma~\ref{lem:ordered} with \(B=S\).  

If \(a\ge5\), place $z_1,\dots,z_a$ with $z_{a-3},z_{a-1}\in Y_0$ and the rest in $Y_1$. We define $L_i$ as in Lemma~\ref{lem:ordered}. For $i\leq a-5$,
\[
L_i = |W|-2\lambda
=|X|-2\mu-2(r-1-\mu)
=|X|-2r+2\geq a-5.
\]
For $i\in\{a-4,a-3,a-2,a-1\}$,  
\[L_i=|W|-\lambda-1
=|X|-2\mu-(r-1-\mu)-1
=|X|-r-\mu \geq a-1.
\]
Finally, 
\[
 L_a= |W|-\lambda
=|X|-r-\mu+1\geq a,
\]
and Lemma~\ref{lem:ordered} applies.

It remains to consider \(a\le4\).  We choose the vertices
\(z_1,\ldots,z_a\) explicitly according to whether they lie in
\(Y_0\) or \(Y_1\).

If \(a=4\), choose
\(z_1,z_3\in Y_1\), \(z_2,z_4\in Y_0\).
Here \(\lambda=a-1=3\).  For \(i=1,\ 2,\ 3\), the pair
\(z_i,z_{i+1}\) consists of one vertex of \(Y_0\) and one vertex of
\(Y_1\), so 
\(
 L_i=|W|-\lambda-1=|W|-4.
\)
Since \(z_4\in Y_0\), $L_4 = |W|-1.$
Over the possibilities in \((10)\), we have \(|W|\ge7\).
Hence $L_1$, $L_2$, $L_3\geq3$ and $L_4\geq6$.

If \(a=3\), choose
\(z_1,z_3\in Y_0\), \(z_2\in Y_1.
\)
Now \(\lambda=a-1=2\). $L_1=L_2= |W|-\lambda-1=|W|-3$,
while $L_3=|W|-1.$
In this case \(|W|\ge5\), so $L_1=L_2\geq 2$, $L_3\geq 4$.

If \(a=2\), choose both \(z_1,z_2\) in \(Y_0\).  Since
\(\lambda=1\), in this case $|W|\geq 3$.
\(
 L_1=|W|-2\geq 1,
\)
\(
 L_2=|W|-1\geq 2.
\)

Finally, if \(a=1\), choose \(z_1\in Y_1\).  Here
\(\lambda=0\), so 
\(L_1=|W|,
\)
which is at least \(1\). 

In every case above, Lemma~\ref{lem:ordered} applies.

Finally consider \eqref{eq:exception}.
Lemma~\ref{lem:Yspan}(a) gives a \(Y\)-spanning cycle.  Remove two
remaining \(X\)-vertices, including the at most two vertices of red
degree at most one into \(Y_0\), and cover them by singletons.  For
the retained vertices,
\[
 d=(k-2)^2,\quad h=k-3,\quad
 p=k^2-6k+10,\quad q=h,\quad a_1=2k-5.
\]
The three conditions of Lemma~\ref{lem:packing} are equivalent to
\(k^2-5k+5>0\), \(3k-13\ge0\), and \(k-5\ge0\).
Hence a red cover with \(k\) cycles exists.
\end{proof}

\begin{corollary}\label{cor:klarge}
No minimum counterexample has \(k\ge5\).
\end{corollary}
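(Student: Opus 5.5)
The corollary follows at once from the two preceding propositions, since they split the possible values of \(a_0\) exhaustively. Suppose, for a contradiction, that a minimum counterexample with \(k\ge5\) exists. Fix the longest monochromatic cycle \(C\), called blue, together with the derived data \(X,Y,Y_0,Y_1\) and \(a_0=|Y_0|\), exactly as in Section~\ref{sec:minimal}. The integer \(a_0\) is well defined and nonnegative, so either \(a_0\ge3\) or \(a_0\le2\).

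If \(a_0\ge3\), Proposition~\ref{prop:a0large} says no minimum counterexample with \(k\ge5\) has this property, which is a contradiction. If \(a_0\le2\), Proposition~\ref{prop:a0small} gives the same contradiction. Therefore no minimum counterexample has \(k\ge5\).

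The only thing to check is that both propositions use the same normalization: a fixed choice of longest monochromatic cycle, the definition of \(a_0\) relative to it, and the standing consequences \eqref{eq:long}, \eqref{eq:basic} and Proposition~\ref{prop:w}. Both are stated for an arbitrary minimum counterexample with \(k\ge5\), under the conventions fixed at the start of Section~\ref{sec:minimal}. Neither imposes any extra hypothesis beyond the respective range of \(a_0\). So there is no real obstacle; the substantive work lies entirely inside Propositions~\ref{prop:a0large} and~\ref{prop:a0small}.
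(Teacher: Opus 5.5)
Your proposal is correct and matches the paper. The paper gives the corollary no separate proof: the exhaustive split $a_0\ge3$ versus $a_0\le2$, handled by Propositions~\ref{prop:a0large} and~\ref{prop:a0small} for a fixed longest blue cycle $C$, is exactly how the paper disposes of $k\ge5$ in the proof of Theorem~\ref{thm:main}.
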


\begin{proof}[Proof of Theorem~\ref{thm:main}]
Suppose that a counterexample exists and choose one of minimum order.
The cases \(n\le8\) were settled at the beginning of
Section~\ref{sec:minimal}.  Proposition~\ref{prop:w} gives
\(w\le r-1\), and Propositions~\ref{prop:a0large} and
\ref{prop:a0small} eliminate every possibility with \(k\ge5\).
Appendix~\ref{app:k4}, beginning with
Proposition~\ref{prop:k4reduce}, eliminates all cases with \(k=4\),
while Lemma~\ref{lem:k3} in Appendix~\ref{app:k3} eliminates \(k=3\).
Thus no minimum counterexample exists.
\end{proof}

\section*{Acknowledgements}
J. Teodomiro was supported by CNPq, grant 140465/2026-0, and Coordena\c{c}\~{a}o de Aperfeiçoamento de Pessoal de N\'{i}vel Superior - Brasil (CAPES) - Finance Code 001.

\appendix

\section{The structural core \texorpdfstring{\(k=4\)}{k = 4}}\label{app:k4}

\begin{proposition}\label{prop:k4reduce}
If a minimum counterexample has \(k=4\), then
\(n\in\{14,15,16\}\), \(w\in\{4,5\}\), and
\[
 (w,a_0,a_1)\in\{(4,3,1),(5,4,1),(5,3,2),(5,2,3)\}.
\]
\end{proposition}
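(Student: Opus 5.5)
The plan is to combine three facts already proved for every minimum counterexample, without any new construction: Proposition~\ref{prop:w} (\(w\le r-1\)), the bound \eqref{eq:long} (\(k\le w\le\floor{n/3}\)), and inequality \eqref{eq:basic} from Lemma~\ref{lem:easyblue} together with \(a_1\ge1\). The first step is to check that these results do not assume \(k\ge5\). Their proofs use only \(n\ge9\), \eqref{eq:nkr} and minimality. In particular, the numerical step \(6\lambda\le-2k^2+18k-18\le22\) in the proof of Proposition~\ref{prop:w} holds for every \(k\), and the excluded case \(n=9\) is handled there directly. So all three facts are available when \(k=4\).

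Next I would pin down \(n\). With \(k=4\), \eqref{eq:nkr} gives \(n=9+r\) with \(1\le r\le7\). Combining \(4=k\le w\le r-1\) gives \(r\ge5\), hence \(n\in\{14,15,16\}\). Then \(w\le\floor{n/3}\le5\) and \(w\ge4\) give \(w\in\{4,5\}\). The case \(w=5\) additionally forces \(r\ge6\), i.e.\ \(n\ge15\), but this refinement is not needed for the statement.

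Finally I would enumerate the pairs \((a_0,a_1)\) with \(a_0+a_1=w\), \(a_1\ge1\) (from Lemma~\ref{lem:blockcover}) and \(a_0+\ceil{a_1/2}\ge4\) (from \eqref{eq:basic}). For \(w=4\), the values \(a_1=1,2,3,4\) give \(a_0+\ceil{a_1/2}=4,3,3,2\), so only \((4,3,1)\) survives. For \(w=5\), the values \(a_1=1,\dots,5\) give \(5,4,4,3,3\), so \((5,4,1)\), \((5,3,2)\) and \((5,2,3)\) survive. This is exactly the displayed list.

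There is no real obstacle here. The only point needing care is the first step: confirming that Proposition~\ref{prop:w} and \eqref{eq:long} were proved without any hidden use of \(k\ge5\), since everything else is finite arithmetic.
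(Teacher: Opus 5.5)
Your proposal is correct and follows the paper's own argument: it combines Proposition~\ref{prop:w} with \eqref{eq:long} to get \(14\le n\le16\) and \(w\in\{4,5\}\), then reads off the four triples from \(a_1\ge1\) and \(a_0+\ceil{a_1/2}\ge4\). Your extra check that Proposition~\ref{prop:w} and \eqref{eq:long} do not use \(k\ge5\) is valid (for instance, \(-2k^2+18k-18\le22\) holds for every integer \(k\)), and it makes explicit a point the paper leaves implicit.
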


\begin{proof}
By Proposition~\ref{prop:w}, and
\eqref{eq:long},
\[
 4\le w\le n/3,\qquad w\le r-1=n-10.
\]
Thus \(14\le n\le16\) and \(w\in\{4,5\}\).
The four triples follow immediately from
\(a_1\ge1\) and \(a_0+\ceil{a_1/2}\ge4\), as in Lemma~\ref{lem:easyblue}.
\end{proof}

In the following displayed cycles, a symbol such as \(S\) or \(P_i\)
inside a vertex sequence denotes a Hamilton path through the indicated
red clique or red path, oriented so that the displayed attachment
edges are red.

\begin{lemma}\label{lem:k4a11}
No minimum counterexample with \(k=4\) has \(a_1=1\).
\end{lemma}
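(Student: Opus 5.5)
The plan is to exploit the fact that \(Y_1=\{y\}\) is a single vertex. Every other outside vertex lies in \(Y_0\), so it is red to all but at most one vertex of \(X\). By Proposition~\ref{prop:k4reduce} only two configurations remain: \((w,a_0,a_1)=(4,3,1)\) with \(n\in\{14,15,16\}\), and \((5,4,1)\) with \(n\in\{15,16\}\) (the constraint \(w\le r-1=n-10\) forces \(n\ge15\) there). Here \(r=n-9\in\{5,6,7\}\), \(|X|=n-w\), and \(2\le\mu\le r-1\), where \(\mu=\db(y,X)\).

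\textbf{Step 1: large \(\mu\).} First I would dispose of large \(\mu\) with the cyclic arguments of Section~\ref{sec:minimal}. If \(\mu\ge r-a_0\), then Lemma~\ref{lem:lowdefect} applies as soon as \(|X|\ge r+\mu+1\). I will check this inequality directly in each configuration. It holds comfortably when \(\mu\) is moderate; when it fails, \(\mu\) is close to \(r-1\), and then \(\mu\ge w\) or \(\mu>w\) typically holds. In that case I switch to Lemma~\ref{lem:Yspan}(c), or to the red clique \(S\) directly, to get a red cycle through \(Y\). This cycle meets \(X\) in \(w\) vertices, and I then attempt to enlarge it through \(S\) until it meets \(C\) in \(r\) vertices, contradicting Lemma~\ref{lem:intersection}. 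Lemma~\ref{lem:cyclic}, whose first condition \(w-1\ge r-\mu\) is then automatic, is the other tool here.

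\textbf{Step 2: small \(\mu\).} In the complementary range \(\mu\le r-a_0-1\le r-4\), so \(\mu\le3\), I would produce a \(Y\)-spanning red cycle \(P\) via Lemma~\ref{lem:Yspan}(a). This needs \(n-2w\ge2\mu\), which holds since \(n-2w\ge6\) in every configuration. I then finish with Lemma~\ref{lem:completion}, taking \(h=k-2=2\) and \(d=n-2w-1\); note that \(a_0\ge3\) holds in both configurations.

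For \((5,4,1)\) we get \(d\in\{4,5\}\), hence \(d\ge a_0=4\) and \(p=d-8<0\), so the lemma applies immediately. For \((4,3,1)\) we get \(d\in\{5,6,7\}\ge3\) and \(p=d-6\le1\). When \(p=1\) we have \(q=1\), and conditions \eqref{eq:packconditions} reduce to \(1\le a_1\), \(7-2\mu\ge0\), and \(7-\mu\ge2\). All three hold for \(\mu\le3\). This gives a red cover by \(k=4\) cycles, a contradiction.

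\textbf{Main obstacle.} I expect the hard part to be the boundary values of \(\mu\) in Step~1, namely \(\mu=r-1\) or \(\mu=r-2\) when \(|X|<r+\mu+1\). For example, take \(n=15\), \(w=5\), \(|X|=10\), \(r=6\), \(\mu=5\). Here \(|X|\) is too small for Lemma~\ref{lem:lowdefect}, and \(\mu=w\) rather than \(\mu>w\). For these few configurations I would argue by hand. The idea is to route a red path through several \(Y_0\) vertices using common red neighbours in \(X\setminus(U\cup S)\). Lemma~\ref{lem:successor}(iv) lets me close this path through \(S\), as in Lemma~\ref{lem:ordered}, to reach exactly \(r\) vertices of \(C\). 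If the common neighbourhoods are too small to do this, I fall back on a direct red cover of the form \(P\), the remaining \(X\)-block closed through \(Y_0\) by Lemma~\ref{lem:nearcomplete}, and singletons.
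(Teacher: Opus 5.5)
\textbf{Verdict: there is a genuine gap, in Step~1.} Your Step~2 is correct. It uses the same tools as the second half of the paper's proof: Lemma~\ref{lem:Yspan}(a) followed by Lemma~\ref{lem:completion} with \(h=2\).

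\textbf{Where Step~1 breaks down.} The configurations with \(\mu\ge r-a_0\) but \(|X|<r+\mu+1\) are exactly
\[(n,w,\mu)\in\{(15,4,5),(16,4,5),(16,4,6),(15,5,4),(15,5,5),(16,5,4),(16,5,5)\}.\]
For these you only say you would argue by hand, and the tools you name do not settle them.
\begin{itemize}
\item Your claim that \(\mu\ge w\) there fails at \((15,5,4)\) and \((16,5,4)\).
\item Lemma~\ref{lem:Yspan}(c) needs \(\mu>w\), so it is unavailable whenever \(w=5\).
\item Even where Lemma~\ref{lem:Yspan}(c) applies, it gives a red cycle meeting \(C\) in only \(w\le r-1\) vertices. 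So nothing contradicts Lemma~\ref{lem:intersection} until you actually specify an enlargement.
\item The enlargement you sketch cannot work in three of the cases. You route a chain through \(Y_0\) with representatives in \(X\setminus(U\cup S)\) and close it through \(S\) as in Lemma~\ref{lem:ordered}. A cycle \(yBz_1x_1\cdots z_ax_ay\) with \(B\subseteq S\), \(|B|=s\), needs \(r-s\) distinct representatives in \(X\setminus(U\cup B)\). That set has only \(|X|-\mu-s\) vertices, which is fewer than \(r-s\) at \((16,4,6)\), \((15,5,5)\) and \((16,5,5)\). In the other four cases the count is tight, so success depends on the colouring.
\item Your fallback presupposes a \(Y\)-spanning red cycle. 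At \((15,5,5)\) none of the criteria of Lemma~\ref{lem:Yspan} hold: \(n-2w=5<2\mu\), \(\mu=w\), and \(|X\setminus U|-w=0<1\).
\end{itemize}

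\textbf{The missing idea.} For large \(\mu\), do not look for a contradiction with Lemma~\ref{lem:intersection}; cover directly.
\begin{itemize}
\item Since \(Y_1=\{y\}\), the red clique \(S\cup\{y\}\) from Lemma~\ref{lem:successor}(ii) carries one red cycle \(Q\). It absorbs \(y\) together with \(\mu\) vertices of \(X\), so large \(\mu\) helps rather than hurts.
\item Put \(\theta=n-3w\) and \(A=X\setminus S\). If \(\mu\ge\theta+1\), then \(|A|\le 2w-1=2a_0+1\).
\item Every vertex of \(Y_0\) misses at most one vertex of \(A\). Also \(|A\setminus E|\ge a_0\), because \(|A|\ge|X|/2\ge5\).
\item So Lemma~\ref{lem:blockcover} covers \(A\cup Y_0\) with at most three red cycles, and \(Q\) is the fourth.
\item If \(\mu\le\theta\), then \(n-2w=\theta+w\ge2\mu\), and your Step~2 goes through with \(p=\theta-w+1\le1\). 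The only case with \(p=1\) is \(n=16\), \(w=4\). There the packing conditions need \(\mu\le3\), and \(\mu=4\) is excluded by Lemma~\ref{lem:lowdefect}.
\end{itemize}
This is the paper's route: splitting at \(\theta\) instead of at \(r-a_0\) removes the boundary cases entirely.

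\textbf{Repairing your own route.} Your intersection approach can also be completed. Take \(j=r-\mu+1\le a_0\) vertices \(u_1,\dots,u_j\in Y_0\). Insert the chain \(u_1x_1u_2\cdots x_{j-1}u_j\) between two vertices of \(S\) on a Hamilton cycle of \(S\cup\{y\}\). The \(x_i\) are taken in \(X\setminus S\) and may lie in \(U\). The resulting red cycle meets \(C\) in exactly \(r\) vertices in all seven configurations. That argument, however, has to be written out, and your proposal does not contain it.
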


\begin{proof}
Write \(Y_1=\{y\}\), so \(a_0=w-1\).  The red clique
\(S\cup\{y\}\) contains a cycle \(Q\).  Put \(\theta=n-3w\ge0\) and
\(A=X\setminus S\).
If \(\mu\ge\theta+1\), then
\(|A|\le2w-1=2a_0+1\).  Also
\(|A\setminus E|\ge a_0\): indeed \(|X|\ge10\),
\(\mu\le|X|/2\), and \(a_0\le4\).
Lemma~\ref{lem:blockcover} covers \(A\cup Y_0\) with at most three
cycles, and \(Q\) is the fourth.

Suppose \(\mu\le\theta\).  Since \(\theta\le w\), we have
\(n-2w=\theta+w\ge2\mu\), so Lemma~\ref{lem:Yspan}(a) applies.  In
Lemma~\ref{lem:completion}, \(h=2\) and
\(p=n-4w+1=\theta-w+1\le1\).
If \(p\le0\), we are done.  If \(p=1\), then
\(n=16,w=4,\theta=4\).  The value \(\mu=4\) is excluded by
Lemma~\ref{lem:lowdefect}, so \(\mu\le3\), and all three conditions
in \eqref{eq:packconditions} hold.
\end{proof}

We shall use the elementary fact that a graph of matching number at
most one is a star together with isolated vertices, or a triangle
together with isolated vertices.

\begin{lemma}\label{lem:k4a12}
No minimum counterexample with \(k=4\) has \(a_1=2\).
\end{lemma}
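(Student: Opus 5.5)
By Proposition~\ref{prop:k4reduce}, the only configuration left is \((w,a_0,a_1)=(5,3,2)\). Proposition~\ref{prop:w} gives \(w\le r-1\), so \(r\ge6\). Hence \(n\in\{15,16\}\), \(r=n-9\in\{6,7\}\) and \(|X|=n-5\in\{10,11\}\). Write \(Y_1=\{y,y'\}\), where \(y\) is the vertex attaining \(\mu\), and \(Y_0=\{z_1,z_2,z_3\}\).

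\emph{Step 1: remove small \(\mu\).} I would first dispose of all small values of \(\mu\) using the general lemmas. Lemma~\ref{lem:lowdefect} applies whenever \(3=a_0\ge r-\mu\) and \(|X|\ge r+\mu+1\). The second condition reads \(\mu\le3\) for both values of \(n\), so \(\mu=3\) is impossible when \(r=6\). Lemma~\ref{lem:cyclic} needs \(4\ge r-\mu\) and \(\mu\le(|X|-r)/2=2\), which kills \(\mu=2\) when \(r=6\). If this does not cover everything, I would run Lemma~\ref{lem:ordered} with \(B\subseteq S\), \(s=\mu\) and \(z_i\) drawn mostly from \(Y_0\). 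In that setting \(\varepsilon_i=1\) except at the single \(Y_1\)-vertex, and \(|W|=|X|-\mu-s\). This should eliminate the remaining moderate values of \(\mu\) for \(r=7\) whenever \(|W|\) is at least about \(a=r-s\).

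\emph{Step 2: large \(\mu\), red cover.} For large \(\mu\) (roughly \(\mu\ge4\)), I would build a red cover with four cycles, in the spirit of Lemma~\ref{lem:k4a11}.
\begin{enumerate}
\item By Lemma~\ref{lem:successor}(iv), \(y'\) misses at most one vertex of the red clique \(S\), and \(y\) is red-complete to \(S\). So a single red cycle \(Q\) through \(S\cup\{y,y'\}\) exists, closing through a common red neighbour in \(X\setminus S\) if \(yy'\) is blue.
\item On the remaining vertices \(A=X\setminus S\) (at most one further vertex of \(X\) may be used by \(Q\)), I would apply Lemma~\ref{lem:blockcover} to the red graph between \(A\) and \(Y_0\), with \(b=3\). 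This covers \(A\cup Y_0\) in \(|E|+\ceil{(|A|-|E|)/3}\le3\) cycles as long as \(|A|\le9-2|E|\), roughly.
\item This count works when \(\mu\ge|X|-7\). The leftover boundary cases are those where \(E\neq\varnothing\) and \(|A|\in\{8,9\}\).
\end{enumerate}
In the boundary cases the single exceptional vertex \(e\in E\) has at least two blue neighbours in \(Y_0\). I would then try either to absorb \(e\) into \(Q\), or to route it into one of the block cycles through \(Y_1\)-vertices, which miss at most \(\mu\) vertices.

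\emph{Step 3: intermediate \(\mu\), spanning cycle plus packing.} When \(\mu\) is intermediate, I would instead use Lemma~\ref{lem:Yspan}(a) or (b) to get a \(Y\)-spanning red cycle. Then Lemma~\ref{lem:completion} applies with \(h=2\), \(a_0=3\), \(d=n-11\in\{4,5\}\) and \(p=d-6<0\). It therefore finishes immediately, provided a \(Y\)-spanning cycle exists.

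\emph{Main obstacle.} The expected difficulty is the window where none of Lemmas~\ref{lem:cyclic}, \ref{lem:lowdefect} or \ref{lem:ordered} applies, yet Lemma~\ref{lem:Yspan} fails because \(n-2w<2\mu\) and the defect condition in (b) fails. This is essentially \(n=16\), \(r=7\), \(\mu\in\{4,5\}\). Here I would analyse the blue graph on \(Y_0\) together with the vertices of \(E\). If its matching number is at least two, two disjoint blue edges plus \(C\) (extended through \(Y_1\) via Lemma~\ref{lem:easyblue}'s 2-connectivity argument) give a blue cover with four cycles. Otherwise, by the quoted fact, that blue graph is a star or a triangle plus isolated vertices. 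That makes \(Y_0\) almost red-complete, which should supply the missing red edges, so that Lemma~\ref{lem:ordered}'s cycle closes through \(r\) vertices of \(C\) and contradicts Lemma~\ref{lem:intersection}.
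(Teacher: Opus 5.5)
\textbf{Assessment.} Your Steps 2 and 3 are exactly the paper's proof, and together they already cover every case. The gap is that you never check this. As written, the proposal leaves cases open and hands them to speculative arguments for an obstacle that does not exist.

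\textbf{The two ranges are complementary.} Lemma~\ref{lem:Yspan}(a) requires $n-2w=|X|-5\ge2\mu$, that is, $\mu\le2$ if $|X|=10$ and $\mu\le3$ if $|X|=11$. In that range Lemma~\ref{lem:completion} finishes, since $d=n-11\ge a_0=3$ and $p=d-6<0$. So Step 3 concerns \emph{small} $\mu$, not intermediate $\mu$. Your Step 2 count succeeds exactly when $|A|=|X|-\mu\le7$, that is, $\mu\ge3$ (for $|X|=10$) or $\mu\ge4$ (for $|X|=11$). Then Lemma~\ref{lem:blockcover} yields at most $3$ cycles whether $|E|=0$ or $|E|=1$. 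Its hypothesis $|A\setminus E|\ge3$ holds because $\mu\le\lfloor|X|/2\rfloor$.

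It follows that your Step 2 boundary cases $|A|\in\{8,9\}$ are precisely the cases where Lemma~\ref{lem:Yspan}(a) applies. Your \emph{main obstacle} $n=16$, $\mu\in\{4,5\}$ has $|A|\le7$, so Step 2 already handles it. Step 1 (correct but redundant), the appeal to Lemma~\ref{lem:ordered}, the absorption of $e$, and the matching-number analysis can all be dropped. The last two were not actually carried out anyway.

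\textbf{The construction of $Q$ fails as stated.} The problem arises when $yy'$ is blue. A vertex $x$ closing $y\,S\,y'\,x\,y$ must be red to $y$, so it lies in $X\setminus(S\cup U)$. That set has only $|X|-2\mu$ vertices, and none at all when $|X|=10$, $\mu=5$. Moreover, $y'$ may be blue to all of them.

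The fix uses the fact you already cite. By Lemma~\ref{lem:successor}(iv), $y'$ has at least $\mu-1\ge2$ red neighbours $x_1,x_2\in S$. Split the red clique $S$ into two red paths $P_1,P_2$ ending at $x_1$ and $x_2$, and take $y\,P_1\,y'\,P_2\,y$. This uses that $y$ is red-complete to $S$, and needs neither the edge $yy'$ nor any vertex outside $S$. With this repair and the range check above, your Steps 2 and 3 form a complete proof, identical to the paper's.
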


\begin{proof}
Here \(w=5,a_0=3,n\in\{15,16\}\).  Write \(Y_1=\{y,z\}\) and
\(Y_0=\{u_1,u_2,u_3\}\), where \(y\) has blue degree \(\mu\) into
\(X\).
If \(n-2w\ge2\mu\), Lemmas~\ref{lem:Yspan}(a) and
\ref{lem:completion} apply, since
\(d=n-11\) and \(p=d-6<0\).
Thus \(\mu\ge3\) for \(n=15\), and \(\mu\ge4\) for \(n=16\).
By Lemma~\ref{lem:successor}, $z$ has at least $2$ red neighbors in $S$, say $x_1$ and $x_2$. Let $P_1$ and $P_2$ be a partition of $S$ into two red paths, where $x_1$ and $x_2$ are in the extremities of $P_1$ and $P_2$, respectively. Then $yP_1zP_2y$ is a red cycle covering all of $S$. Let $A=X\setminus S$, then, in both cases, $|A|\leq7$ and Lemma~\ref{lem:blockcover} covers $A\cup Y_0$ with at most three further red cycles.
\end{proof}

It remains to handle \(a_1=3\).  Write
\[
 Y_0=\{u,v\},\qquad Y_1=Z=\{y,z_2,z_3\}.
\]

\begin{lemma}\label{lem:Zstructure}
The set \(Z\) is a red triangle, and the blue graph on \(Y\) has
matching number at most one.
\end{lemma}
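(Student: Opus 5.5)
We argue by exhibiting, in each forbidden configuration, a blue cover of \(K_n\) with at most \(k=4\) cycles, which is impossible for a counterexample. By Proposition~\ref{prop:k4reduce} and the lemmas already proved for \(a_1\in\{1,2\}\), we have \((w,a_0,a_1)=(5,2,3)\), so \(Y=\{u,v,y,z_2,z_3\}\). Every vertex of \(Z=Y_1\) has at least two blue neighbours on \(C\).

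\emph{Matching number.} Suppose the blue graph on \(Y\) contains two disjoint edges \(e_1,e_2\). A single edge counts as a degenerate cycle. Then \(C\), \(e_1\), \(e_2\) and the one vertex of \(Y\) left uncovered, taken as a singleton, form a blue cover with \(4=k\) cycles. This contradicts the choice of a counterexample, so the blue graph on \(Y\) has matching number at most one.

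\emph{\(Z\) is a red triangle.} Suppose some edge inside \(Z\) is blue, say \(z z'\), and let \(z''\) be the third vertex of \(Z\). We build one blue cycle \(D\) through all three of \(z,z',z''\). Then \(C\), \(D\), \(\{u\}\) and \(\{v\}\) form a blue cover with four cycles, again a contradiction.

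To build \(D\), first choose \(x\in U_z\) and \(x'\in U_{z'}\) with \(x\ne x'\); this is possible since \(|U_z|,|U_{z'}|\ge2\). Next choose \(p,p'\in U_{z''}\) distinct, with \(\{p,p'\}\cap\{x,x'\}=\varnothing\) if possible. The path \(x\,z\,z'\,x'\) and the path \(p\,z''\,p'\) are two ears on \(C\), and we combine them by two arcs of \(C\).
\begin{itemize}
\item If the pairs \(\{x,x'\}\) and \(\{p,p'\}\) do not interleave on \(C\), say in the cyclic order \(x,x',p,p'\), take
\[
D=x\,z\,z'\,x'\,(\text{arc of }C\text{ from }x'\text{ to }p)\,p\,z''\,p'\,(\text{arc of }C\text{ from }p'\text{ to }x).
\]
\item If they interleave, in the order \(x,p,x',p'\), traverse \(C\) backwards from \(x'\) to \(p\), then use \(p\,z''\,p'\), then go forwards from \(p'\) to \(x\).
\end{itemize}
In either case \(D\) is a blue cycle containing \(z,z',z''\).

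\emph{The main obstacle} is endpoint collisions, when \(U_{z''}\) meets \(\{x,x'\}\) so that \(p,p'\) cannot be chosen disjoint from \(x,x'\). I would handle this by re-choosing \(x\) or \(x'\) within the at least two available blue neighbours of \(z\) and \(z'\). If a collision is forced, it is a shared endpoint, say \(p=x'\). Then I would splice the ears directly: \(x\,z\,z'\,x'\,z''\,p'\) followed by an arc of \(C\) from \(p'\) to \(x\), which is again a blue cycle through all of \(Z\).

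Lemma~\ref{lem:successor}(i) ensures that no ear endpoint pair is a pair of consecutive vertices of \(C\) that would merely reproduce \(C\). This completes the plan; hence all three edges of \(Z\) are red and \(Z\) is a red triangle.
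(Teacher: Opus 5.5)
Your proof is correct. The matching-number part is exactly the paper's argument. For the red triangle you take a different, more hands-on route.

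The paper does not build the blue cycle by hand. It observes that $C$, together with $Z$ and all blue edges from $Z$ to $C$, is $2$-connected, because each vertex of $Z$ has at least two neighbours on $C$. It then invokes the standard consequence of the fan lemma: in a $2$-connected graph, a prescribed edge $zz'$ and a third vertex $z''$ lie on a common cycle. Your ear-splicing argument is essentially an explicit proof of that fact for this particular graph. It is self-contained and avoids any appeal to Menger-type results, but it costs a case analysis. The paper's version takes two lines and reuses the same $2$-connectivity observation already made in Lemma~\ref{lem:easyblue}.

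A few points of presentation could be tightened:
\begin{itemize}
\item The collision paragraph is more complicated than it needs to be; no re-choosing of $x$ or $x'$ is required. Whenever $U_{z''}$ meets $\{x,x'\}$, say in $x'$, take any $p'\in U_{z''}\setminus\{x'\}$. The path $x\,z\,z'\,x'\,z''\,p'$ then closes along the arc of $C$ from $p'$ to $x$ that avoids $x'$, or directly via the edge $z''x$ if $p'=x$. Otherwise $U_{z''}\cap\{x,x'\}=\varnothing$, and your two-ear construction applies.
\item In the splice, you should say explicitly that the arc of $C$ is the one avoiding $x'$.
\item The closing appeal to Lemma~\ref{lem:successor}(i) is unnecessary and somewhat misattributed. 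That lemma only forbids consecutive vertices inside a single set $U_z$, and none of your constructions needs the endpoints to be non-consecutive on $C$.
\end{itemize}
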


\begin{proof}
Adjoin \(Z\) and its blue edges to \(C\).  Every vertex of \(Z\) has
at least two neighbors on \(C\), so deletion of any one vertex leaves
the graph connected; hence it is \(2\)-connected.  In a
\(2\)-connected graph, a prescribed edge and a third vertex lie on a
common cycle (apply the two-fan lemma from the third vertex to the
ends of the edge).  Thus a blue edge in \(Z\) would lie on a blue
cycle through all of \(Z\).  Together with \(C\) and the singleton
cycles \(u,v\), this gives four blue cycles.  Hence \(Z\) is a red
triangle.  Two independent blue edges in \(Y\), the remaining
singleton, and \(C\) give the same contradiction.
\end{proof}

\begin{lemma}\label{lem:compensation}
If \(q\in Y_0\) is blue-adjacent to a vertex of \(Z\), then \(q\) is
red-complete to \(X\).
\end{lemma}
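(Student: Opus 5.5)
The plan is to argue by contraposition using blue covers, in the same spirit as Lemmas~\ref{lem:easyblue} and~\ref{lem:Zstructure}. Recall that in this configuration \(k=4\), \(w=5\), \(a_0=2\), \(a_1=3\). Since \(q\in Y_0\), we have \(\db(q,X)\le1\). If \(\db(q,X)=0\), then \(q\) is red-complete to \(X\) and there is nothing to prove. So suppose \(q\) is blue-adjacent to some \(z\in Z\) and also to some \(x\in X\). I will produce a blue cover with at most four cycles, contradicting that the coloring is a counterexample.

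\emph{Step 1: an auxiliary \(2\)-connected graph.} Let \(H\) be the blue graph consisting of \(C\), the vertices of \(Z\cup\{q\}\), all blue edges from \(Z\) to \(C\), and the two blue edges \(qz\) and \(qx\). I claim \(H\) is \(2\)-connected. Each vertex of \(Z\subseteq Y_1\) has at least two blue neighbors on \(C\), and \(q\) has the two neighbors \(z\) and \(x\). Now check single deletions:
\begin{itemize}
\item deleting a vertex \(v\) of \(C\) leaves \(C-v\) a path, to which every vertex of \(Z\) still attaches, and \(q\) attaches through \(z\) (or through \(x\) if \(v\ne x\));
\item deleting \(z\) leaves \(q\) attached through \(x\);
\item deleting \(q\), or another vertex of \(Z\), clearly leaves the graph connected.
\end{itemize}

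\emph{Step 2: a blue cycle through \(q\), \(z\) and a second vertex of \(Z\).} Pick \(z'\in Z\setminus\{z\}\). As in the proof of Lemma~\ref{lem:Zstructure}, in a \(2\)-connected graph a prescribed edge and a third vertex lie on a common cycle. Apply this to the edge \(qz\) and the vertex \(z'\). This gives a blue cycle \(D\) of \(H\) containing \(q\), \(z\) and \(z'\). (In fact every cycle through \(q\) uses both \(qz\) and \(qx\), since \(q\) has degree \(2\) in \(H\).)

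\emph{Step 3: the cover.} Let \(z''\) be the third vertex of \(Z\) and \(q'\) the other vertex of \(Y_0\). Then \(C\), \(D\), and the singleton cycles \(z''\) and \(q'\) form a blue cover of \(K_n\) with \(4=k\) cycles. This contradicts the assumption that the coloring is a counterexample. Hence \(\db(q,X)=0\), i.e. \(q\) is red-complete to \(X\).

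The only delicate point is the \(2\)-connectivity check in Step~1. It relies on \(q\) having \emph{two} distinct blue neighbors in \(H\), and that is exactly what the assumed blue edge to \(X\) supplies. Everything else reuses the argument already given for Lemma~\ref{lem:Zstructure}.
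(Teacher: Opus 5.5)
Your proof is correct, but it splits \(Y\) among the blue cycles differently from the paper. Both proofs derive the same contradiction, a blue cover with \(k=4\) cycles.

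The paper builds the cycle through \(q\) and \(z\) by hand. It picks \(x'\in\Nb(z,X)\setminus\{x\}\) and takes \(q\,z\,x'\), then an \(x'\)--\(x\) arc of \(C\), then back to \(q\). It covers the other two vertices of \(Z\) by one blue cycle from Lemma~\ref{lem:easyblue}. Its cover is therefore \(C\), two cycles each through a pair of \(Y\)-vertices, and the singleton \(q'\).

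You instead put \(q,z,z'\) on a single cycle. This uses the \(2\)-connectivity of your auxiliary graph \(H\) and the edge-plus-vertex fact from Lemma~\ref{lem:Zstructure}, and leaves \(z''\) and \(q'\) as singletons. Your \(2\)-connectivity check is complete, including the point that \(q\) needs both blue neighbours \(z\) and \(x\) to survive the deletion of either one.

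The paper's route is more elementary: it needs no connectivity verification, only an arc of \(C\). Yours avoids Lemma~\ref{lem:easyblue}. As a by-product it gives a single blue cycle through \(q\) and two vertices of \(Z\), which is slightly stronger than needed.
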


\begin{proof}
Suppose \(qz\) and \(qx\), with \(z\in Z,x\in X\), are blue.
Choose \(x'\in\Nb(z,X)\setminus\{x\}\).  The edges
\(qz,zx',qx\), and an \(x\)--\(x'\) arc of \(C\) form a blue cycle
through \(q,z\).  The other two vertices of \(Z\) lie on a common
blue cycle by Lemma~\ref{lem:easyblue}.  These two cycles, \(C\), and
the remaining \(Y_0\)-singleton form a four-cycle blue cover.
\end{proof}

\begin{lemma}\label{lem:k4muge3}
No minimum counterexample with \(k=4,a_1=3,\mu\ge3\) exists.
\end{lemma}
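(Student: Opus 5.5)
The plan is to pin down the parameters and then build a red cover with four cycles. In this case $w=5$, $a_0=2$, $a_1=3$, and $n\in\{15,16\}$, so $r=n-9\in\{6,7\}$, $|X|=n-5\in\{10,11\}$, and $3\le\mu\le r-1$.

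\emph{Step 1: the spanning case.} If $n-2w\ge 2\mu$, that is $\mu\le 2$ for $n=15$ and $\mu\le3$ for $n=16$, use Lemma~\ref{lem:Yspan}(a) to get a $Y$-spanning red cycle $P$. It contains $Y$ and exactly $5$ vertices of $X$. The remaining $X$-vertices number $n-10\le 6$. Since $a_0=2$, Lemma~\ref{lem:packing} cannot be used directly through Lemma~\ref{lem:completion}. Instead, I would remove the at most two vertices of red degree $\le1$ into $Y_0$ as singletons. The rest would then be covered with $Y_0$ by balanced cycles via Lemma~\ref{lem:nearcomplete}: blocks of size $2$, padded with previously used vertices.

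The count must total four cycles, and this is where it becomes tight. With up to $6$ leftover vertices we have $P$, two singletons, and two blocks of size $2$. That is five cycles, one too many. So I would rather run the argument with $\mu$ large, or use Lemma~\ref{lem:cyclic}. For $\mu=3$ and $n=16$ we have $w-1=4\ge r-\mu=4$ and $|X|=11\ge r+2\mu=13$, which is false. So Lemma~\ref{lem:cyclic} does not settle this case, and the main work is elsewhere.

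\emph{Step 2: use Lemma~\ref{lem:ordered}.} Take $B=S$ (or $B\subseteq S$ with $s$ chosen suitably), $a=r-s$, $\lambda=r-1-\mu$, and $W=X\setminus(U\cup B)$. Order $z_1,\dots,z_a$ from $Y\setminus\{y\}=\{z_2,z_3,u,v\}$ so as to alternate $Y_0$ and $Y_1$ where possible and to end at a $Y_0$ vertex. Then check the increasing condition $L_j\ge j$. Lemma~\ref{lem:concentration} gives $\lambda\le r-1-\mu$, which is small when $\mu$ is near $r-1$. With $|W|=|X|-2\mu$ and $a\le4$, I expect this to work whenever $\mu\ge r-3$ or so. In the remaining middle values of $\mu$ (for instance $\mu=3$ or $4$ with $r=7$), take $s<\mu$ to enlarge $W$, at the cost of a larger $a$.

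\emph{Step 3: the residual small cases.} Lemmas~\ref{lem:Zstructure} and~\ref{lem:compensation} should give extra red edges. $Z$ is a red triangle. The blue graph on $Y$ is a star or a triangle. Any $Y_0$ vertex that is blue-adjacent to $Z$ is red-complete to $X$, and this supplies the extra common red neighbours needed to choose representatives. Alternatively, one can build the red cycle $y\,S\,z_2\,x\,z_3\,x'\,y$ through $Z$, meeting $C$ in $r$ vertices, which contradicts Lemma~\ref{lem:intersection}.

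The main obstacle is the middle range $\mu\in\{3,4\}$ with $n=16$, where $|W|$ is too small for the greedy representative bounds. There I would first try to use the red triangle $Z$ to save the connecting representatives between $Y_1$ vertices, since consecutive vertices of $Z$ can be joined directly by red edges. This reduces the number of needed $x_i$ and should restore $L_j\ge j$.
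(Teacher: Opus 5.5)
\textbf{There is a genuine gap: the plan never closes a case, and Lemma~\ref{lem:ordered}, the tool you rely on, fails in five of the six cases.} Your target is the same as the paper's: a red cycle meeting $C$ in $r$ vertices, contradicting Lemma~\ref{lem:intersection}. Note that $\mu\in\{3,4,5\}$, since $U\cap S=\varnothing$ forces $2\mu\le|X|\le11$.

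\emph{Why Lemma~\ref{lem:ordered} fails.} The $z_i$ are distinct vertices of the four-set $Y\setminus\{y\}$, so $a=r-s\le4$. Also $|W|-a=|X|-\mu-r=4-\mu$, and every $L_i\le|W|$, while the largest $L_i$ must be at least $a$.
\begin{itemize}
\item[(i)] For $\mu=5$ this already fails, since $|W|=a-1$.
\item[(ii)] For $\mu=4$ we have $\lambda\ge1$, so every $\varepsilon_i\ge1$ and every $L_i\le a-1$.
\item[(iii)] For $(n,\mu)=(16,3)$ the choice is forced: $s=3$, $a=4$, $|W|=5$, $\lambda=3$, and this fails for every ordering.
\end{itemize}
Only $(15,3)$ is settled, e.g.\ with $s=2$, $z_1,z_2\in Y_1$, $z_3,z_4\in Y_0$, which gives bounds $1,2,3,4$. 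So your expectation that $\mu$ near $r-1$ is the easy end is backwards.

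\emph{Why your remedy for $(16,3)$ cannot work.} Here $r-\mu=4$, so every gap between the four remaining $Y$-vertices must carry an $X$-vertex, and no red edge inside $Y$ can be used. The missing observation is that only the representative adjacent to $y$ has to avoid $U$. Draw representatives from $X\setminus S$ and bound blue degrees crudely by $\db(z,X)\le\mu$, rather than via Lemma~\ref{lem:concentration}. This gives candidate sets of sizes at least $4,4,4,2$ for $ySux_1z_2x_2vx_3z_3x_4y$.

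\emph{The case $\mu=4$.} Your idea of saving a representative with a red edge of $Y$ is right here, but again only with candidates in $X\setminus S$; inside $W$ the first bound is $0$. For $n=16$ the paper uses $ySz_2x_1ux_2vx_3z_3y$. For $n=15$ it uses $ySz_2x_1ux_2vz_3y$, where the red edge between $\{u,v\}$ and $\{z_2,z_3\}$ is forced by Lemma~\ref{lem:Zstructure}.

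\emph{The case $\mu=5$.} This is where the real work lies, and you give only a pointer to Lemmas~\ref{lem:Zstructure} and~\ref{lem:compensation}. Your fallback cycle $ySz_2xz_3x'y$ does not exist in general. Its candidate set $\Nr(z_2)\cap\Nr(z_3)\setminus S$ has no positive lower bound. For $n=15$, moreover, $X\setminus S=U$ consists of blue neighbours of $y$, so $x'$ cannot exist at all.
\begin{itemize}
\item[(i)] For $n=15$, the paper splits on whether the red bipartite graph between $\{z_2,z_3\}$ and $\{u,v\}$ has a perfect matching. If it does not, the two blocking blue edges form a star, because the blue graph on $Y$ has matching number at most one. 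Lemma~\ref{lem:compensation} then supplies the red-completeness needed for $ySz_2z_3uxvy$ or $ySuxvz_2z_3y$.
\item[(ii)] For $n=16$, an idea your plan never envisages is needed. Put $W=X\setminus S$. If both $\Nr(z_2)\cap\Nr(u)\cap W$ and $\Nr(v)\cap\Nr(z_3)\cap W$ are empty, then $y,z_2,z_3$ each have at least five blue neighbours in the six-set $W$. This yields a blue $6$-cycle through $Z$, hence a four-cycle \emph{blue} cover, which is a contradiction. Otherwise, take the third representative inside $S$, namely $x_3\in S\cap\Nr(v)\cap\Nr(z_3)$, and use only a four-set $B\subseteq S\setminus\{x_3\}$ in $yBz_2x_1ux_2vx_3z_3y$.
\end{itemize}
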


\begin{proof}
If \(n=15,\mu=3\), apply Lemma~\ref{lem:ordered} with \(s=2\)
and $z_1,z_2\in Y_1$, $z_3,z_4\in Y_0$.  The increasing lower bounds are exactly
\(1,2,3,4\).

We give the remaining absorbers.  All representatives are chosen
outside the displayed successor-clique vertices.

If \(|X|=11,\mu=3\), use
\[
 ySux_1z_2x_2vx_3z_3x_4y.
\]
The first three candidate sets have size at least four and the last
has size at least two.  Choose the last representative first, then
the other three; the bounds \(2,4,4,4\) give distinct choices.

Let \(\mu=4\).  If \(|X|=10\), at least one edge between
\(\{u,v\}\) and \(\{z_2,z_3\}\) is red, since otherwise the blue
graph on \(Y\) has a matching of size two.  Relabel it as \(vz_3\)
and use
\[
 ySz_2x_1ux_2vz_3y.
\]
The two representative sets have sizes at least one and four.
If \(|X|=11\), use
\[
 ySz_2x_1ux_2vx_3z_3y;
\]
the three lower bounds \(2,5,2\), when sorted as \(2,2,5\), satisfy
Hall's condition.

Let \(\mu=5\) and \(|X|=10\).  If the red bipartite graph between
\(\{z_2,z_3\}\) and \(\{u,v\}\) has a perfect matching, say
\(z_2u,z_3v\), choose
\[
 x\in\Nr(u)\cap\Nr(v)\setminus S
\]
and use \(ySz_2uxvz_3y\).  Otherwise choose one blue edge blocking
each of the two red perfect matchings.  These two blue edges cannot
be independent by Lemma~\ref{lem:Zstructure}; hence they form a
star.  Up to relabeling, its center is \(z_2\) or \(u\).
In the first case Lemma~\ref{lem:compensation} makes \(u,v\)
red-complete to \(X\), while \(z_3u\) and \(vy\) are red; use
\[
 ySz_2z_3uxvy.
\]
In the second case \(u\) is red-complete to \(X\), both
\(vz_2,vz_3\) are red, and \(v\) has a red neighbor
\(x\in X\setminus S\); use
\[
 ySuxvz_2z_3y.
\]

Finally let \(\mu=5,|X|=11\), put \(W=X\setminus S\), and define
\[
 F_1=\Nr(z_2)\cap\Nr(u)\cap W,\qquad
 F_3=\Nr(v)\cap\Nr(z_3)\cap W.
\]
If \(F_1=F_3=\varnothing\), then each pair among \(y,z_2,z_3\)
has at least four common blue neighbors in the six-set \(W\).
We may therefore choose distinct representatives from the three corresponding common blue neighborhoods, giving
a blue \(6\)-cycle through \(Z\); with \(C,u,v\) this is a
four-cycle blue cover.  Relabel so that \(x_1\in F_1\).  Choose
\[
 x_2\in\Nr(u)\cap\Nr(v)\cap(W\setminus\{x_1\}),
\quad
 x_3\in S\cap\Nr(v)\cap\Nr(z_3).
\]
The first candidate set for \(x_2\) has size at least four before
deleting \(x_1\), and the set for \(x_3\) has size at least three.
Choose a four-set \(B\subseteq S\setminus\{x_3\}\).  Then
\[
 yBz_2x_1ux_2vx_3z_3y
\]
is red.  Every displayed red cycle in this proof contains exactly
\(r\) vertices of \(C\), contrary to
Lemma~\ref{lem:intersection}.
\end{proof}

\begin{lemma}\label{lem:k4mu2}
No minimum counterexample with \(k=4,a_1=3,\mu=2\) exists.
\end{lemma}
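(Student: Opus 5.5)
We are in the setting $k=4$, $a_1=3$, $w=5$, $a_0=2$, so $n\in\{15,16\}$, $r=n-9\in\{6,7\}$, and $|X|=n-5\in\{10,11\}$. Since $\mu=2$, every vertex of $Z=\{y,z_2,z_3\}$ has exactly two blue neighbors on $C$, and every vertex of $Y_0=\{u,v\}$ has at most one. The plan is to build a red cycle meeting $C$ in exactly $r$ vertices, contradicting Lemma~\ref{lem:intersection}. Failing that, we produce a red cover with four cycles.

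The first attempt is the absorber of Lemma~\ref{lem:cyclic}, which needs $w-1\ge r-\mu$ and $|X|\ge r+2\mu$. Here $w-1=4$ and $r-\mu=r-2\in\{4,5\}$, while $|X|=r+4$ exactly. So Lemma~\ref{lem:cyclic} settles $n=15$ ($r=6$) directly. Similarly, Lemma~\ref{lem:lowdefect} would need $a_0\ge r-2\ge4$, which fails, so it does not help.

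The remaining case is $n=16$, $r=7$, $|X|=11$. Here a cycle through $y$, the red edge structure of $S$ (two vertices, adjacent in red), and all four other vertices of $Y$ would meet $C$ in only $2+4=6<7$ vertices. So we instead aim for a $Y$-spanning red cycle and an exact completion. The steps are as follows.

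\begin{enumerate}
\item Since $n-2w=6\ge2\mu=4$, Lemma~\ref{lem:Yspan}(a) gives a $Y$-spanning red cycle $P$, using $5$ vertices of $X$ and leaving $6$ vertices of $X$.
\item We must cover these six vertices with three red cycles. Each vertex of $Y$ has at most two blue neighbors in $X$; the two $Y_0$ vertices have at most one each.
\item Following Lemma~\ref{lem:completion} with $h=2$, we may hope to use $Y$ (not only $Y_0$) as the reusable class. The six remaining vertices split into two triples. By the nonedge count (at most $2\cdot3+2=8$ blue $X$--$Y$ edges), we choose a $3$-subset $Y'$ of $Y$ and apply Lemma~\ref{lem:nearcomplete}, needing each $Y'$-vertex to miss at most one vertex of the triple and each triple vertex to have two red neighbors in $Y'$.
\item Alternatively, we cover vertices of low red degree into $Y$ by singletons. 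The budget is $P$ plus three cycles, so only one singleton is allowed.
\end{enumerate}

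A cleaner route, which I would try first, is to choose $P$ cleverly. The blue neighbors $U_z$ of $Z$ total at most $6$ vertices, plus at most $2$ for $Y_0$. We select the $5$ $X$-vertices of $P$ to absorb as many blue-heavy $X$-vertices as possible, so that the leftover six are nearly red-complete to $Y$. The leftover six can then be split into two triples closed by Lemma~\ref{lem:nearcomplete} with $Y'=\{u,v,\cdot\}$ or $Z$, giving four cycles in total: $P$ and three more, with one spare.

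The main obstacle is this final distribution. A single $X$-vertex may be blue to several $Y$-vertices, so low-red-degree vertices cannot obviously be avoided by the leftover set. Lemma~\ref{lem:successor}(iv) restricts how $U_z$ and $S_{z'}$ interact, and Lemma~\ref{lem:Zstructure} together with Lemma~\ref{lem:compensation} constrain blue edges inside $Y$.

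If the packing gets stuck, I would fall back on building an $r=7$ intersection directly. Such a cycle would be $y\,S_y\,z_2\,S_{z_2}\ldots$, concatenating the red successor cliques $S_y,S_{z_2},S_{z_3}$ (each of size $2$, red-complete to their owner, and almost red to the others). This gives up to six $X$-vertices, plus one representative through $u$ or $v$, for $7$ in total. That would contradict Lemma~\ref{lem:intersection}; disjointness of the successor sets and the overlapping cases would need to be handled using Lemma~\ref{lem:successor}.
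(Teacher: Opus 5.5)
Your reduction to $n=16$ via Lemma~\ref{lem:cyclic} is correct and matches the paper. The facts you collect for $n=16$ are also right: there are at most eight blue $X$--$Y$ edges, and Lemma~\ref{lem:Yspan}(a) gives a $Y$-spanning red cycle $P$. But the proposal stops exactly where the content of the lemma lies. You never show that the six $X$-vertices left over by $P$ can be covered by three red cycles; you yourself call this the ``main obstacle.''

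Moreover, both routes you sketch fail as stated in a configuration that nothing in your argument excludes: $U_y=U_{z_2}=U_{z_3}=\{\ell_1,\ell_2\}$, with $u$ blue to $\ell_1$ and $v$ blue to $\ell_2$.
\begin{itemize}
\item Each $\ell_i$ then has exactly one red neighbor in $Y$. So it lies neither on $P$ (whose $X$-vertices are common red neighbors of two $Y$-vertices) nor on any cycle produced by Lemma~\ref{lem:nearcomplete}. Your plan is forced to use two singletons, which rules out the two-triples split and contradicts your claim that only one singleton is allowed.
\item The fallback fails too. The three successor edges coincide, so concatenating successor cliques contributes only two vertices of $C$. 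With one connector between consecutive $Y$-vertices, the recipe reaches at most $2+4=6<7$ vertices of $C$. Lemma~\ref{lem:intersection} cannot be triggered this way, and a cover must be built explicitly. This is exactly the paper's third case.
\end{itemize}

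Closing the gap requires a real case distinction. The paper splits on how the successor edges interact; write $Z=\{z_1,z_2,z_3\}$ and let $S_i$ be the successor edge of $z_i$.
\begin{itemize}
\item If two successor edges meet, their union is a red three-vertex path $P_X$, and $z_1P_Xz_2x_1ux_2z_3x_3vx_4z_1$ meets $C$ in seven vertices.
\item If two are disjoint, $z_1S_1ux_1z_3x_2vS_2z_2x_3z_1$ meets $C$ in seven vertices.
\item If all three coincide, $Z$ is red-complete to $X\setminus U$. Then two singletons on $U$, a red Hamilton cycle on $A'\cup Y$, and $a_1z_1a_2z_2a_3z_3a_4qa_1$ give four red cycles.
\end{itemize}

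Alternatively, your packing idea can be completed if you drop $P$ and choose the singleton globally. Let $L$ be the set of $X$-vertices with at most one red neighbor in $Y$; the eight-edge count gives $|L|\le2$.
\begin{itemize}
\item If $|L|=2$, each member of $L$ carries exactly four blue edges to $Y$, so the other nine $X$-vertices are red-complete to $Y$. Two singletons plus red Hamilton cycles of a $K_{5,5}$ and a $K_{4,4}$ suffice.
\item If $|L|\le1$, take as singleton $x_0$ the member of $L$. If $L$ is empty, take instead a vertex of the triangle formed by the pairs $U_y,U_{z_2},U_{z_3}$, if they form one. (A member of $L$ lies in at least two of these pairs, hence on any such triangle.) The multigraph on $X\setminus\{x_0\}$ with edges $U_z$ is then bipartite with at most three edges. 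At most six of its vertices are non-isolated, so it has a balanced proper $2$-coloring: two $5$-sets, each meeting every $U_z$ at most once. Lemma~\ref{lem:nearcomplete} then closes each $5$-set with all of $Y$, giving at most three red cycles.
\end{itemize}
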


\begin{proof}
For \(n=15\), Lemma~\ref{lem:cyclic} applies, so \(n=16\).
For \(z_i\in Z\), let \(S_i\) be its red successor edge.

First suppose two distinct successor edges meet.  Their union is a
three-vertex red path \(P_X\) in \(X\), and it can be oriented so
that
\[
 z_1P_Xz_2
\]
is red.  In the eight-set \(X\setminus V(P_X)\), the four mixed
common red neighborhoods needed below each have size at least five.
Thus distinct representatives give
\[
 z_1P_Xz_2x_1ux_2z_3x_3vx_4z_1.
\]

If two successor edges are disjoint, orient one from \(z_1\) toward
\(u\), and the other from \(v\) toward \(z_2\).  In their
seven-vertex complement, the two mixed candidate sets have size at
least four, and the common red candidate set for \(z_2,z_1\) has
size at least three.  Hence
\[
 z_1S_1ux_1z_3x_2vS_2z_2x_3z_1
\]
is red.  Both cycles meet \(C\) in seven vertices.

The only remaining configuration is
\(S_1=S_2=S_3\): if two equal edges and a third different edge
occur, the third either meets them (the first case) or is disjoint
(the second case).  Equality of the successor sets implies equality
of the blue neighborhoods, say \(U\).  Thus \(Z\) is
red-complete to \(A=X\setminus U\), where \(|A|=9\).
Choose \(A'\subseteq A\) of size five.
Lemma~\ref{lem:nearcomplete} gives a red Hamilton cycle on
\(A'\cup Y\): vertices of \(A'\) have their three neighbors in
\(Z\), and each member of \(Y_0\) misses at most one vertex of
\(A'\).  Put \(A\setminus A'=\{a_1,a_2,a_3,a_4\}\).  Choose
\(q\in Y_0\) and order these four vertices so that
\(qa_1,qa_4\) are red.  Then
\[
 a_1z_1a_2z_2a_3z_3a_4qa_1
\]
is a second red cycle.  These two cycles and the two singleton
vertices of \(U\) form a four-cycle red cover.
\end{proof}

\section{The final case \texorpdfstring{\(k=3\)}{k = 3}}\label{app:k3}

\begin{lemma}\label{lem:k3}
No minimum counterexample has \(k=3\).
\end{lemma}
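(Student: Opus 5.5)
The plan is first to pin the parameters down completely. For $k=3$ we have $n\le 9$, and the beginning of Section~\ref{sec:minimal} gives $n\ge 9$. Hence $n=9$ and, by \eqref{eq:nkr}, $r=5$. Proposition~\ref{prop:w} gives $w\le r-1=4$, and \eqref{eq:long} gives $3=k\le w\le\floor{n/3}=3$. So $w=3$ and $|X|=6$. Combining $a_1\ge1$ with \eqref{eq:basic}, i.e.\ $a_0+\ceil{a_1/2}\ge3$, leaves only $(a_0,a_1)=(2,1)$. Write $Y_1=\{y\}$ and $Y_0=\{u,v\}$. By Lemma~\ref{lem:successor}(i), $U$ contains no two consecutive vertices of the $6$-cycle $C$, so $\mu\le3$; thus $\mu\in\{2,3\}$. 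Lemma~\ref{lem:intersection} now says that no red cycle meets $C$ in $5$ vertices.

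Next I would extract extra red structure from the blue side. A blue cover by $3$ cycles is forbidden. If $uv$ were blue, then $C$, the edge $uv$ and the singleton $y$ would be such a cover. If $yu$ (or $yv$) were blue, then $C$, that edge and the remaining singleton would be one. Hence $Y$ is a red triangle. Also, $u$ and $v$ each miss at most one vertex of $X$ in red, and $y$ is red to $X\setminus U$, a set of size $6-\mu\ge3$. The target is a red cover by three cycles; an edge or a singleton counts as a degenerate cycle.

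The case $\mu=3$ is handled as in Lemma~\ref{lem:k4a11}. The red clique $S\cup\{y\}$ gives one cycle $Q$, and it remains to cover $A\cup Y_0$ by two red cycles, where $A=X\setminus S=U$ has $|A|=3$. There are at most two red nonedges between $A$ and $\{u,v\}$.
\begin{itemize}
\item If two vertices $a,a'\in A$ are red-complete to $\{u,v\}$, then $uava'$ is a red $4$-cycle and the third vertex of $A$ is a singleton.
\item Otherwise there is exactly one complete vertex $a_1$, and the other two are $a_2,a_3$, with $ua_2$ and $va_3$ blue. Then I use the red edge $uv$: the cycle $ua_1v$ plus the edge $a_2a_3$ (if red) or a rerouting using $ua_3$, $va_2$ and $y$ should close. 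If this fails, I will instead build a long red cycle through $y$, $S$ and vertices of $A$. By Lemma~\ref{lem:successor}(iv), $u$ and $v$ have red neighbours in $S$. I aim for either $5$ vertices of $C$, contradicting Lemma~\ref{lem:intersection}, or exactly four, leaving two leftover vertices to be covered as an edge and a singleton.
\end{itemize}

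The case $\mu=2$ is where I expect the main difficulty. Now $S$ is a single red edge $s_1s_2$, with $y$ red to both endpoints. The set $X\setminus(U\cup S)$ has two vertices, and $y$ is red to it. I would try to build red cycles of the form $y\,s_1s_2\,\cdots\,u\,x\,v\,\cdots\,y$ that use the red triangle $Y$ and common red neighbours of $u,v$. Each of $u,v$ misses at most one vertex of $X$ in red, so $u$ and $v$ have at least four common red neighbours in $X$. Such a cycle should reach $5$ vertices of $C$, contradicting Lemma~\ref{lem:intersection}. If Hall-type choices fail, this pins down where the two blue edges from $Y_0$ go, and a direct red $3$-cycle cover (a big red cycle, an edge and a singleton) should then be available. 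This finite case check on the positions of the few blue edges is the obstacle I expect. If needed I would fall back on the Faudree--Lesniak--Schiermeyer bound to constrain the colouring further.
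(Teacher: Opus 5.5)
Your reduction is correct and matches the paper: $n=9$, $r=5$, $w=3$, $(a_0,a_1)=(2,1)$, $\mu\in\{2,3\}$, and $Y$ is a red triangle. The gap is in the two cases, and both problems come from aiming at the wrong target.

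\textbf{The case $\mu=2$.} You leave this as an unperformed ``finite case check'' and try to force a red cycle meeting $C$ in five vertices. Nothing you have established forces this. In a red cycle through $y,u,v$, each of the at most three segments between consecutive vertices of $Y$ is a red path in $X$. The only red edge inside $X$ your argument provides is $s_1s_2$, so you can only guarantee $2+1+1=4$ vertices of $X$.

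Four is already enough, and no appeal to Lemma~\ref{lem:intersection} is needed. Since $|X|=6$, a red cycle through all of $Y$ and exactly four vertices of $X$, together with two singleton cycles, is a red cover by $3=k$ cycles. Here is the explicit construction.
\begin{enumerate}
\item Put $R=X\setminus(U\cup S)$, so $|R|=2$, and pick $r_0\in R\cap N^r(v)$. This is possible because $v$ has at most one blue neighbour in $X$.
\item The $3$-set $X\setminus(S\cup\{r_0\})$ contains a common red neighbour $c$ of $u$ and $v$.
\item Orient $S$ so that its last vertex is red to $u$.
\end{enumerate}
Then $y\,S\,u\,c\,v\,r_0\,y$ is red: $y$ is red to $S$ by Lemma~\ref{lem:successor}(ii), $y$ is red to $r_0$ because $r_0\notin U$, and $vy$ is red.

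\textbf{The case $\mu=3$.} The second bullet contains a step that cannot work. Here $A=X\setminus S=U=N^b(y,X)$, so $y$ is blue to every vertex of $A$. Hence no red rerouting through $y$ and $a_2$ or $a_3$ exists, and when $a_2a_3$ is blue your $Q$-based decomposition is left unfinished.

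Your fallback is the right idea, and again it needs only four vertices of $X$. Take the vertex $a_1\in A$ that is red to both $u$ and $v$, which exists as you observed. Then $y\,S\,u\,a_1\,v\,y$ is red, where $S$ is traversed along a Hamilton path ending at a red neighbour of $u$. This cycle covers $Y$ and four vertices of $X$. The two remaining vertices of $A$ are two singletons, not ``an edge and a singleton''.

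This uniform construction makes $Q$ and the bullet analysis unnecessary, and it is exactly the paper's proof in both cases. The Faudree--Lesniak--Schiermeyer bound gives nothing beyond $|X|\ge 6$, which you have already used.
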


\begin{proof}
The earlier reductions force
\[
 n=9,\qquad w=3,\qquad a_0=2,\qquad a_1=1.
\]
Write \(Y_0=\{u,v\}\), \(Y_1=\{z\}\), \(S=S_z\), and
\(U=\Nb(z,X)\).  A blue edge in \(Y\), the remaining singleton,
and \(C\) would give three blue cycles, so \(Y\) is a red triangle.
Since \(U\) has no consecutive vertices on the six-cycle \(C\),
\(\mu=|U|\in\{2,3\}\).

If \(\mu=3\), the two vertices \(u,v\), each missing at most one
vertex of \(U\), have a common red neighbor \(c\in U\).
Orient a Hamilton path through \(S\) so that its last vertex is
red-adjacent to \(u\).  Then
\[
 zS\,u\,c\,v\,z
\]
is a red cycle covering \(Y\) and four vertices of \(X\); cover the
other two vertices by singletons.

If \(\mu=2\), put \(R=X\setminus(S\cup U)\), so \(|R|=2\).
Choose \(r_0\in R\cap\Nr(v)\).  The set
\(X\setminus(S\cup\{r_0\})\) has size three, while \(u,v\) each
miss at most one of its vertices; hence it contains
\[
 c\in\Nr(u)\cap\Nr(v).
\]
Orient \(S\) toward \(u\).  Since \(r_0\notin U\), the cycle
\[
 zS\,u\,c\,v\,r_0\,z
\]
is red and again leaves only two \(X\)-vertices for singleton
cycles.
\end{proof}

\bibliographystyle{abbrv}
\bibliography{references}

\end{document}